\documentclass[12pt, a4paper]{amsart}

\usepackage[hmargin=30mm, vmargin=25mm, includefoot, twoside]{geometry}
\usepackage[bookmarksopen=true]{hyperref}
\usepackage[usenames,dvipsnames]{xcolor}

\usepackage{amscd}
\usepackage{amsfonts,amssymb,verbatim}
\usepackage{latexsym}
\usepackage{mathrsfs}
\usepackage{stmaryrd}
\usepackage{xspace}
\usepackage{enumerate, paralist}
\usepackage{graphicx}
\usepackage[all]{xy}
\usepackage{extarrows}
\usepackage{tikz-cd}

\usepackage{txfonts, pxfonts}

\usepackage{amsthm}
\usepackage{amsmath}

\usepackage{todonotes}
\newtheorem{thm}{Theorem}[section]
 \newtheorem{cor}[thm]{Corollary}
 \newtheorem{lem}[thm]{Lemma}
 \newtheorem{prop}[thm]{Proposition}
 
 \newtheorem{Q}[thm]{Question}

\numberwithin{equation}{section}

 \theoremstyle{definition}
  \newtheorem{defn}[thm]{Definition}

 \theoremstyle{remark}
 \newtheorem{rem}[thm]{Remark}

\newtheorem*{claim*}{Claim}

\def\Dax{{\rm Dax}}
\def\Diff{{\rm Diff}}
\def\MCG{{\rm MCG}}
\def\aut{{\rm Aut}}

\def\RR{\mathbb{R}}

\def\ZZ{\mathbb{Z}}

\def\S{\mathscr{S}}
\def\F{\mathscr{F}}

\begin{document}

\title[On the Dax invariants of $S^2$-bundles over surfaces]{On the Dax invariants of $S^2$-bundles over surfaces}

\author{Qizheng You}

\address[Q. You]{School of Mathematical Sciences, Peking University, China.}
\email{qizhengyou@stu.pku.edu.cn}

\date{}

\thanks{}

\begin{abstract}
This paper studies the nontrivial orientable $S^2$-bundle $\Sigma \ltimes S^2$ over a closed surface $\Sigma$ of genus $g \geq 1$.
We have three main results as follows.
We construct the relative Dax invariants for pointed embeddings of $\Sigma$ into $\Sigma \ltimes S^2$, 
which satisfies isotopy invariance, additivity, and naturality.
For some embedded surfaces in $\Sigma \ltimes S^2$ with a fixed geometric dual, 
we establish a complete classification up to isotopy.
We give an alternative construction of a surjective homomorphism $\hat{\Phi}: \MCG(\Sigma \ltimes S^2) \to \mathbb{Z}^\infty$.

\end{abstract}

\date{\today}
\maketitle

\parskip 4pt

\textit{
  Keywords: 
  4-manifold, the Dax invariant, mapping class group.
  }

\section{Introduction}\label{sec:intro}

In the seminal work \cite{G19}, 
David Gabai established the 4-dimensional Light Bulb Theorem, 
while introducing obstructions 
that preclude homotopic embedded closed surfaces from being isotopic. 
In \cite{G21}, 
Gabai studied the analogous problem for neatly embedded disks in 4-manifolds with a common boundary geometric dual,
and provided a new interpretation of the Dax invariants for embedded disks, 
which were first introduced in \cite{D72}. 
Subsequently in \cite{KT24}, Kosanović and Teichner showed that,
for neatly embedded disks in smooth 4-manifolds that are homotopic relative to the boundary,
the Dax invariants are the only obstruction to isotopy.
These ‘‘homotopy does not imply isotopy’’ phenomena and the Dax invariants, 
have garnered much scholarly attention,
since they have relevance to the investigations into the topology of smooth 4-manifolds (see, e.g. \cite[Corollary 1.7]{G19}, \cite{G20} ,and \cite[Theorem 2.3]{KT24}).

Later in the work \cite{LWXZ25}, 
Lin, Wu, Xie and Zhang extend the construction of the Dax invariants to embedded closed surfaces in the 4-manifolds $\Sigma \times S^2$,
where $\Sigma$ is the closed surface with genus $g \geq 1$.
They apply the Dax invariants to the study of 
the mapping class group, the isotopy classification for embedded surfaces, and the nonisotopic symplectic structures.
They remark that their idea when defining the Dax invariants 
in principle applies to the embeddings of closed surfaces in arbitrary 4-manifolds.
In this paper, we will construct the Dax invariants for embedded surfaces in the nontrivial orientable $S^2$-bundle over $\Sigma$,
under the methodological guidance of \cite{LWXZ25}.

In what follows, we work exclusively within the smooth category.
Let $\Sigma$ be the closed surface with genus $g \geq 1$.
Let $\Sigma \ltimes S^2$ be the orientable $S^2$-bundle over $\Sigma$, 
required that its second Stiefel-Whitney class is nonzero.
Our first main result is defining the relative Dax invariants for some embedded surfaces in $\Sigma \ltimes S^2$.
We construct a family of pointed embeddings $\{\S_d: \Sigma \hookrightarrow \Sigma \ltimes S^2\}_{d \in \ZZ}$ in Definition \ref{defofsd}.
Let $\mathcal{E}_d$ be the set consisting of the pointed embeddings $i: \Sigma \hookrightarrow \Sigma \ltimes S^2$ homotopic to $\S_d$ relative to the basepoint.
Let $\mathcal{C}$ be the set consisting of the conjugacy classes of $\pi_1(\Sigma \ltimes S^2)$ except the conjugacy class containing the unit.
Denote the free abelian group generated by $\mathcal{C}$ as $\ZZ[\mathcal{C}]$.
There is an involution $\sigma$ on $\ZZ[\mathcal{C}]$, sending $[g]$ to $[g^{-1}]$.
Denote the invariant subgroup of $\ZZ[\mathcal{C}]$ under the action of $\sigma$ as $\ZZ[\mathcal{C}]^\sigma$. 
We have the following theorem.

\begin{thm}\label{thm1}
  There is a map 
  $$ \Dax : \mathcal{E}_d \times \mathcal{E}_d \rightarrow \ZZ[\mathcal{C}]^\sigma, $$
  satisfying the following properties.
  \begin{itemize}
    \item[(1)](Isotopy invariance) If $i,i' \in \mathcal{E}_d$ are isotopic relative to the basepoint, $\Dax(i,i') = 0$.
    \item[(2)](Additivity) It holds that $\Dax(i_1,i_2) + \Dax(i_2,i_3) = \Dax(i_1,i_3)$.
    \item[(3)](Naturality) For any pointed diffeomorphism $f: \Sigma \ltimes S^2 \rightarrow \Sigma \ltimes S^2$ and $i_1,i_2 \in \mathcal{E}_d$, it holds that
  $ \Dax(f \circ i_1,f \circ i_2) = f_* \circ \Dax(i_1, i_2). $
  Here $f_*$ is the induced automorphism of $f$ on $\ZZ[\mathcal{C}]^\sigma$.
  \end{itemize}
\end{thm}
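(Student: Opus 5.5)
Following \cite{LWXZ25}, I would define $\Dax(i,i')$ through a based homotopy and the classical Dax invariant of a generic path of maps. Given $i,i'\in\mathcal{E}_d$, both are homotopic to $\S_d$ rel basepoint, so I choose a pointed homotopy $H:\Sigma\times[0,1]\to\Sigma\ltimes S^2$ from $i$ to $i'$. After a perturbation it is a generic path of immersions except at finitely many times, and its track has isolated double points. At each double-point time $t$, two points $x\neq y$ of $\Sigma$ with $H_t(x)=H_t(y)$ give a loop: go from the basepoint to $x$ in $\Sigma$, map it by $H_t$, and return from $y$ to the basepoint. I record the conjugacy class of this loop with the Dax sign. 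The symmetrization under $x\leftrightarrow y$ lands in $\ZZ[\mathcal{C}]^\sigma$. Trivial classes are discarded, so the local contributions of cusps and finger moves with trivial group element do not matter. This gives a count $D(H)\in\ZZ[\mathcal{C}]^\sigma$. Equivalently, it is the obstruction in $\pi_1$ of the space of maps relative to embeddings, as in Dax's description via $\pi_2(\mathrm{Map},\mathrm{Emb})$.

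The main obstacle is that $D(H)$ depends on $H$. Two homotopies differ by a loop in the space of pointed maps based at $i$, so I must define
\[\Dax(i,i') = D(H) - \mathrm{correction}(H)\]
or show that $D$ vanishes on all such loops. So I need to compute $\pi_1(\mathrm{Map}_*(\Sigma,\Sigma\ltimes S^2),\S_d)$ and evaluate $D$ on generators. These generators come from $\pi_1$ of the fiber $S^2$ along the section class and from $\pi_2$ and $\pi_3$ contributions, that is, from spinning $\Sigma$ around spheres and from sweeping a fiber sphere. For each generator I would compute the Dax count through the intersection numbers of $\Sigma$ with the swept sphere $\lambda(\S_d, S^2)$ and its self-intersection. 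Here the twisting of $\Sigma\ltimes S^2$ enters: the fiber sphere has self-intersection $0$, while $\S_d$ has self-intersection about $2d+1$ (odd, since $w_2\neq0$). I expect the contributions of these loops either to vanish in $\ZZ[\mathcal{C}]$ or to lie only in the trivial class, because the loops coming from $\pi_1(\Sigma)$ can be realized by ambient isotopies (pushing along loops). Any genuinely nontrivial ambiguity can be killed by a correction term linear in the homotopy class, as in \cite{LWXZ25}. The choice of $\S_d$ in Definition \ref{defofsd} is presumably made so that this computation is tractable.

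Once $\Dax$ is well defined, the three properties are formal. For isotopy invariance, if $i$ and $i'$ are isotopic I take $H$ to be the isotopy; it has no double points, so $\Dax(i,i')=0$. For additivity, I concatenate a homotopy from $i_1$ to $i_2$ with one from $i_2$ to $i_3$; the double-point counts add, and so do the corrections, since the correction is additive under concatenation. For naturality, $f\circ H$ is a homotopy from $f\circ i_1$ to $f\circ i_2$, and its double points correspond bijectively to those of $H$, with group elements transported by $f_*$ on $\pi_1$. Since $f$ is pointed, this action is well defined, and $f_*$ preserves conjugacy classes and the trivial class and commutes with $\sigma$. The correction term is also natural, because $f$ carries the generators of $\pi_1$ of the mapping space to the corresponding generators for $f\circ\S_d$, which is homotopic to some $\S_{d'}$.
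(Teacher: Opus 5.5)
\textbf{There is a genuine gap: the invariant $D(H)$ you propose is not defined for surfaces in a $4$-manifold.}

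A generic map $\Sigma^2\to M^4$ is already an immersion with finitely many transverse double points, and embeddings form an open subset of the mapping space. So a generic homotopy $H$ between two embeddings is a sequence of finger moves, Whitney moves and cusps. The set of times at which $H_t$ is not an embedding is a union of intervals, not a finite set. The double-point locus of the track $\Sigma\times I\to M\times I$ is one-dimensional ($2\cdot 3-5=1$). There are no isolated ``double-point times'' and no canonical finite collection of group elements to sum.

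You are transplanting the situation of \emph{arcs}. Isolated double points occur in generic two-parameter families of arcs in a $4$-manifold, and that is the only setting in which the count of Subsection~\ref{subsec:diskdax} makes sense. For the same reason, surfaces in $4$-manifolds lie outside Dax's metastable range, so you cannot appeal to an identification of $\pi_1(\mathrm{Map},\mathrm{Emb})$ with a double-point group.

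This is why the paper never compares whole surfaces through a homotopy. It isotopes $i$ to agree with $\S_d$ on $H_0\cup H_1$. It then uses the barbell diffeomorphisms coming from $\pi_1(F_1)$ (Lemma~\ref{moveh1}) to make the $2$-handles homotopic rel boundary in $M_2$. Only then does it apply the disk invariant $\Dax_{M_2}$, which is computed from the scanning loop of arcs and takes values in $\ZZ[\pi\setminus\{1\}]$ because $d_3=0$ for $M_2$.

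Even granting some working definition, the step you leave as an expectation is the substance of the theorem. In the paper the ambiguity is the choice of element of $\pi_1(F_1)$, and well-definedness is Proposition~\ref{0cp}: $\mathfrak{p}\circ\mathcal{Q}=0$ on $K$. This vanishing is established only after projecting to conjugacy classes, which is why the target is $\ZZ[\mathcal{C}]^\sigma$ rather than $\ZZ[\pi\setminus\{1\}]^\sigma$. The proof has two steps:
\begin{itemize}
\item It reduces to $K_0$ via Lemma~\ref{K0}, realizing the $\mathcal{R}$-component by fibrewise rotations about the axis through $(0,0,1)^T$. These fix $\S_0$ pointwise, so they have $\mathcal{Q}=0$.
\item It then runs the barbell calculus of Propositions~\ref{homotopepath} and~\ref{existenceofbarbell}.
\end{itemize}

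Your fallback of a ``correction term linear in the homotopy class'' cannot simply be subtracted. You would need a function on paths between distinct embeddings that vanishes on isotopies, is additive, and agrees with $D$ on loops of maps. Constructing it is no easier than the original problem. Otherwise you can only quotient by the image of $D$ on loops, which changes the target.

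Finally, naturality needs more care. When $f$ acts nontrivially on $\pi$, $f\circ i_j$ need not lie in $\mathcal{E}_d$, and $f\circ\S_d$ need not be pointed-homotopic to any $\S_{d'}$. The paper handles this by reparametrizing via Dehn--Nielsen and a bundle map $F$ with $F\circ\S_0=\S_0\circ f_\dagger$ (Lemma~\ref{naturality}). It combines this with invariance under diffeomorphisms acting trivially on $\pi$.
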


We briefly sketch the construction of the Dax invariants for embeddings of surfaces in $\Sigma \ltimes S^2$.
We may fix a handle decomposition of $\Sigma$, such that the 0-handle $H_0$ containing the basepoint $b_0$ of $\Sigma$.
Given $ i \in \mathcal{E}_d$, we can apply an isotopy such that the restrictions of $i$ and $\S_d$ to the 0-handle $H_0$ and 1-handles $H_1$ are identical.
Let $M_2$ be the manifold obtained by excising a tubular neighbourhood of $\S_d(H_0 \cup H_1)$  as Definition \ref{m1m2}.
We need a further isotopy to ensure that the restrictions of $i$ and $\S_d$ to the 2-handle $H_2$,
as embedded disks in $M_2$, are homotopic relative to the boundary.
Then we can define $\Dax(i,\S_d)$ to be the image of $\Dax_{M_2}(\langle i|_{H_2},\S_d|_{H_2} \rangle)$ under the canonical projection 
$$\ZZ[\pi_1(M_2)] \cong \ZZ[\pi_1(\Sigma \ltimes S^2)] \rightarrow \ZZ[\mathcal{C}].$$
Here $\langle-,-\rangle$ refers to the the scanning map as (\ref{scmap}),
and $\Dax_{M_2}$ refers to the Dax isomorphism of $M_2$. 
See Subsection \ref{subsec:daxofm2} for a detailed description for $\Dax_{M_2}$.
Some additional clarification is required to establish that the Dax invariants is well-defined.
Then Theorem \ref{thm1}(1)(2) are implied by the definition, and Theorem \ref{thm1}(3) is shown in Lemma \ref{naturality}.

Our second main result focuses on the isotopy classification of embedded surfaces in $\Sigma \ltimes S^2$ with the common geometric dual $S'_0$.
Here $S'_0$ is the $S^2$-fiber of $\Sigma \ltimes S^2$ over the basepoint $b_0$ of $\Sigma$.
\begin{defn}\label{deffd}
  Let $\mathcal{F}_d$ be the set consisting of the embedded surfaces in $\Sigma \ltimes S^2$, satisfying the following condition.
  \begin{itemize}
    \item[(1)] Any embedded surfaces $\Sigma \in \mathcal{F}_d$ intersect $S'_0$ transversely and positively in $\Sigma \ltimes S^2$ at one point denoted as $b$.
    \item[(2)] The algebraic intersection number of any $\Sigma \in \mathcal{F}_d$ with $\S_{-1}$ is identical to $d$.
    \item[(3)] Any embedded surfaces $\Sigma_1 \in \mathcal{F}_d$ agree with the embedded surface $\S_d(\Sigma)$ on a neighbourhood of $S'_0$.
  \end{itemize} 
  There is an equivalence relation $\approx$ on $\mathcal{F}_d$. 
  For $\Sigma_1, \Sigma_2 \in \mathcal{F}_d$, say $\Sigma_1 \approx \Sigma_2$, 
  if there are parameterizations $i_1,i_2$ for $\Sigma_1,\Sigma_2$ such that $i_1(b_0) = i_2(b_0) = b$,
  and $i_1$ and $i_2$ are isotopic relative to $b_0$.
\end{defn}

Then we have the following theorem.

\begin{thm}\label{thm3}
 There is a bijection
 $$ \Lambda : \mathcal{F}_d/\approx  \ \rightarrow \ \ZZ[\mathcal{C}]^\sigma, \quad [\Sigma] \mapsto \Dax(i,\S_d). $$
 Here $i$ is a parameterization for $\Sigma$ such that $i(b_0) = b$ and $i$ is homotopic to $\S_d$ relative to the basepoint $b_0$.
\end{thm}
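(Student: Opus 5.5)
The proof has three parts: show that $\Lambda$ is well defined, that it is injective, and that it is surjective. The main inputs are Theorem \ref{thm1} and the Kosanović--Teichner theorem \cite{KT24}, which says that for neatly embedded disks with a common geometric dual, homotopic rel boundary, the Dax invariant is a complete isotopy invariant. The dual sphere $S'_0$ plays the role of this common dual throughout.

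\textbf{Well-definedness.} Let $\Sigma_1\in\mathcal{F}_d$.

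First we need a parameterization $i$ with $i(b_0)=b$ and $i\simeq\S_d$ rel $b_0$. Condition (1) gives $i_*[\Sigma]$ intersection number $1$ with the fiber, and condition (2) fixes the intersection number with $\S_{-1}$. Since $H_2(\Sigma\ltimes S^2)$ is detected by these two pairings, the class $i_*[\Sigma]$ agrees with that of $\S_d$. On $\pi_1$, we precompose $i$ with a suitable diffeomorphism of $\Sigma$ fixing $b_0$ so that $i_*=(\S_d)_*$. Pointed maps $\Sigma\to\Sigma\ltimes S^2$ are then classified by the $\pi_1$-action together with the degree on the fiber, so $i\simeq\S_d$ rel $b_0$.

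Next, independence of the choice of $i$. Two such parameterizations differ by a pointed diffeomorphism $\phi$ of $\Sigma$ acting trivially on $\pi_1$, because both induce $(\S_d)_*$ and $\pi_1(\Sigma)\to\pi_1(\Sigma\ltimes S^2)$ is an isomorphism. Such a $\phi$ is isotopic to the identity rel $b_0$, so $i\circ\phi$ and $i$ are isotopic rel $b_0$. By Theorem \ref{thm1}(1),(2) they have the same Dax invariant.

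Finally, if $\Sigma_1\approx\Sigma_2$ via $i_1,i_2$, then $\Dax(i_1,\S_d)=\Dax(i_2,\S_d)$ by isotopy invariance and additivity.

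\textbf{Injectivity.} Suppose $\Dax(i_1,\S_d)=\Dax(i_2,\S_d)$, so $\Dax(i_1,i_2)=0$ by additivity.

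1. Following the construction of $\Dax$, isotope $i_1$ and $i_2$ to agree with $\S_d$ on $H_0\cup H_1$. This isotopy must keep a neighbourhood of $S'_0$ fixed. That is arranged by choosing $H_0$ to contain the neighbourhood of $b_0$ on which condition (3) holds, and using the light bulb trick with the dual $S'_0$ to isotope the 1-handle cores, which are arcs.

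2. Both 2-handle disks in $M_2$ now have the geometric dual obtained from $S'_0$, namely a parallel fiber over a point of $H_2$.

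3. The remaining obstruction is the discrepancy between the Dax isomorphism $\Dax_{M_2}$, valued in $\ZZ[\pi_1 M_2]$, and its image in $\ZZ[\mathcal{C}]^\sigma$. The kernel of the projection is generated by indeterminacies: changes of the 2-handle disk coming from re-choosing the isotopy on $H_0\cup H_1$, i.e.\ loops of embeddings of the 1-skeleton, which contribute commutator-type terms $[g]-[hgh^{-1}]$, together with the $\sigma$-ambiguity.

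4. These are exactly the terms already shown to be ambiguities in the well-definedness part of Theorem \ref{thm1}. So we can modify the isotopy on $H_0\cup H_1$ to make $\Dax_{M_2}$ vanish on the nose. \cite{KT24} then yields an isotopy of the disks rel boundary, which assembles into an isotopy $i_1\simeq i_2$ rel $b_0$.

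\textbf{Surjectivity.} Fix $x=[g]+[g^{-1}]$ with $[g]\in\mathcal{C}$ (or a generator fixed by $\sigma$). Realize $x$ by modifying $\S_d|_{H_2}$ inside $M_2$: take the self-referential disk (Gabai's construction), obtained by an ambient finger move of a small arc of the disk along a loop representing $g$ and tubing into the dual sphere. Its $\Dax_{M_2}$ is a known element mapping to $x$. This modification happens away from $S'_0$, and it preserves homotopy class and intersection numbers, so the new surface lies in $\mathcal{F}_d$. Sums of such generators are realized by performing the moves disjointly, using additivity.

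\textbf{Expected main obstacle.} The hardest step is injectivity, specifically step 3: showing that every element in the kernel of $\ZZ[\pi_1]\to\ZZ[\mathcal{C}]^\sigma$ can be absorbed by re-choosing the isotopy on the 1-skeleton while keeping the neighbourhood of $S'_0$ fixed. I would try to compute directly the effect of spinning a 1-handle core around a loop $h$, expecting it to change $\Dax_{M_2}$ by $g\mapsto hgh^{-1}$ on the relevant terms.
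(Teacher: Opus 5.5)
Your well-definedness and surjectivity arguments are fine and essentially match the paper (Lemma \ref{parameter} and self-referential barbells). Injectivity, however, has a genuine gap at steps 3--4, as you suspect in your last paragraph.

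To make $\Dax_{M_2}$ vanish on the nose by re-choosing the isotopy of $H_0\cup H_1$, you need every element of $\ker\mathfrak{p}\cap\ZZ[\pi\setminus\{1\}]^\sigma$ to equal $\mathcal{Q}(\alpha)$ for some $\alpha\in K$. The well-definedness of Theorem \ref{thm1} (Proposition \ref{0cp}) gives only the opposite inclusion $\mathcal{Q}(K)\subseteq\ker\mathfrak{p}$: it says the indeterminacy lies in the kernel, not that it exhausts it. So ``these are exactly the terms already shown to be ambiguities'' is a non sequitur. The needed equality $\mathcal{Q}(K)=\ker\mathfrak{p}\cap\ZZ[\pi\setminus\{1\}]^\sigma$ is proved nowhere. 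Via the fibration tower it is essentially a restatement of the injectivity you want, so it cannot serve as an input.

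Your suggested computation does not supply it either. Since $\Dax_{M_2}(i,f_\alpha\circ i)=\mathcal{Q}(\alpha)$ is independent of $i$, the $\pi_1(F_1)$-action translates Dax values by a fixed element; it does not conjugate the terms of a given disk. Moreover, a single spin $\hat\rho_\gamma$ changes the homotopy class (Lemma \ref{classchange}), so it is not in $K$. You would need, for all $g,h$, an explicit combination of vertical-meridian barbells satisfying (\ref{Kcondition}) with $\mathcal{Q}$-value $\pm(g+g^{-1}-hgh^{-1}-hg^{-1}h^{-1})$. That barbell-calculus problem is left open by your plan.

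There is also a problem with the dual. \cite{KT24} needs a dual sphere in $\partial X$. A fibre in the interior of $M_2$ is not one, and no sphere in $\partial M_2\cong\#_{2g}S^1\times S^2$ meets the null-homologous curve $\partial i(H_2)$ algebraically once.

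The paper therefore proceeds differently:
\begin{itemize}
\item It deletes $\nu=p^{-1}(D)$ and works in $M_2'$, whose fundamental group $\pi'$ is free. Before that, Theorem \ref{isotopyclass}(2) secures the correct relative homotopy classes in $\pi'$, not just in $\pi$, via Lemma \ref{minusd} and a reparametrization; the light bulb trick is not what is needed for the $1$-handle arcs.
\item It obtains $i_1|_{H_2}\simeq i_2|_{H_2}+\mathrm{fm}(\alpha)^G$ with $\alpha\in\ZZ[\pi'\setminus\{1\}]^\sigma$. Writing $\alpha=\beta+\sigma(\beta)$, this difference is a collection of self-referential tubes labelled by $\beta$.
\item It isotopes these tubes in $M$. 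Pushing a tube across $\nu$ changes its label within a fibre of $\pi'\to\pi$. Sliding its foot around loops in the $\pi_1$-surjective surface conjugates the label. So when $\mathfrak{p}(\alpha)=0$ the tubes cancel.
\end{itemize}
These isotopies move the surface over its $1$-handle part and through the neighbourhood of $S'_0$. That is exactly what your restriction excludes: you fix that neighbourhood and only re-choose the $1$-skeleton isotopy. In the paper, conjugation invariance comes from these geometric moves, not from the $\pi_1(F_1)$-action.
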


The surjectivity of $\Lambda$ relies on the construction of self-referential tubes on an embedded surface.
The injectivity of $\Lambda$ is based on an analogy of the isotopy procedure when defining the Dax invariants in Theorem \ref{thm1},
and the results in \cite[Theorem 1.1]{KT24} of Kosanović and Teichner. 

Let $\Diff(\Sigma \ltimes S^2)$ be the group consisting of the self-diffeomorphisms of $\Sigma \ltimes S^2$.
Define the mapping class group $\MCG(\Sigma \ltimes S^2) = \pi_0 (\Diff(\Sigma \ltimes S^2))$.
There is a subgroup $\MCG_0(\Sigma \ltimes S^2)$ of $\MCG(\Sigma \ltimes S^2)$, 
consisting of the mapping classes $[f]$ satisfying that $f$ is homotopic to the identity map.
As another application of the Dax invariants in $\Sigma \ltimes S^2$, we give an alternative proof of the following theorem of Guo in \cite{Guo25}.

\begin{thm}\label{thm2}
  There is a surjective homomorphism
  $$ \hat{\Phi}: \MCG(\Sigma \ltimes S^2) \rightarrow \ZZ^\infty. $$
  The image of $\MCG_0(\Sigma \ltimes S^2)$ under the homomorphism $\hat{\Phi}$ is also of infinite rank.
\end{thm}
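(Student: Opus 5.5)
The strategy follows the $\Sigma\times S^2$ approach of \cite{LWXZ25}. We build $\hat\Phi$ from the Dax invariants of Theorem \ref{thm1}, measuring how a mapping class moves the reference embeddings $\S_d$. For each $d$ (or for one suitable $d$), and for a mapping class $[f]$, we first isotope $f$ to be pointed. We also arrange that $f\circ \S_d$ is pointed homotopic to $\S_{d'}$. Here $d'$ is determined by the action of $f$ on $H_2$; it equals $d$ when $f$ is homotopic to the identity, which is the situation that matters for $\MCG_0$. On the subgroup $\MCG^{d}$ of classes preserving the pointed homotopy class of $\S_d$, we set
$$\Phi_d(f)=\Dax(f\circ\S_d,\S_d)\in\ZZ[\mathcal{C}]^\sigma.$$

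The first step is to check that $\Phi_d$ is a crossed homomorphism. By additivity and naturality in Theorem \ref{thm1},
$$\Phi_d(fg)=\Dax(fg\S_d,f\S_d)+\Dax(f\S_d,\S_d)=f_*\Phi_d(g)+\Phi_d(f).$$
Isotopy invariance shows that $\Phi_d$ depends only on the mapping class. It does not depend on the choice of pointed representative either: changing the representative by a point-pushing loop changes $\Phi_d$ by a controlled term, so we may quotient by it.

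The second step is to turn this crossed homomorphism into an honest homomorphism. We compose with the projection to coinvariants $\ZZ[\mathcal{C}]^\sigma\to(\ZZ[\mathcal{C}]^\sigma)_{\aut}$ under the action of the image of $\MCG$ on $\pi_1$. Alternatively, we restrict to the subgroup acting trivially on $\pi_1(\Sigma\ltimes S^2)\cong\pi_1(\Sigma)$, where $f_*=\mathrm{id}$. That subgroup contains $\MCG_0$ and has finite-index-type control. Conjugacy classes in $\pi_1(\Sigma)$ form infinitely many orbits under the finitely generated image of $\MCG(\Sigma)$; for example, the classes $[a^n]$ for a fixed primitive $a$ lie in distinct orbits. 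So the coinvariants still contain a free abelian group of infinite rank. We extend over all of $\MCG$ by the usual trick of \cite{Guo25}/\cite{LWXZ25}: take the homomorphism to the target $\ZZ^\infty$ defined by evaluating orbit-sums. This gives $\hat\Phi$.

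The third step, surjectivity, is the main obstacle. We must realize infinitely many independent values by diffeomorphisms homotopic to the identity. We take a geometric dual pair ($S'_0$ with $\S_d$) and realize, via Theorem \ref{thm3}'s surjectivity, an embedding $i\in\mathcal{E}_d$ with $\Dax(i,\S_d)=[a^n]+[a^{-n}]$. This embedding is built using self-referential tubes along $a^n$. We then need a diffeomorphism $f$ isotopic-to-homotopic-to-identity with $f\circ\S_d=i$. We would try a barbell-type diffeomorphism (Budney--Gabai) supported near $\S_d\cup S'_0$ together with a loop representing $a^n$. Such a map is homotopic to the identity, since barbells are homotopically trivial, and it pushes $\S_d$ exactly to the self-referential tube surface. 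Computing its Dax value reduces, via the definition through $\Dax_{M_2}$, to the standard Dax computation of a single barbell implantation in \cite{KT24}. The real difficulty is verifying this computation and checking that the images for $n=1,2,\dots$ remain linearly independent after passing to orbits. Once that is done, infinite rank of $\hat\Phi(\MCG_0)$ follows, and surjectivity onto $\ZZ^\infty$ follows by choosing the target basis to be these orbit classes.
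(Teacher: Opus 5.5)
\textbf{Gap in Steps 1--2: no homomorphism on all of $\MCG(\Sigma\ltimes S^2)$.}
The quantity $\Dax(f\circ\S_d,\S_d)$ is defined only when $f\circ\S_d\in\mathcal{E}_d$. Every $\S_{d'}$ induces the same map on $\pi_1$, so this forces $f_*=\mathrm{id}$ on $\pi$ and $f$ orientation-preserving. Consequences:
\begin{itemize}
\item your ``arrange that $f\circ\S_d\simeq\S_{d'}$'' fails for general $f$;
\item passing to coinvariants does nothing on your domain;
\item point-pushing classes act by conjugation, so for $g\ge 2$ they are not even in the domain, and ``changes by a controlled term'' is not an argument.
\end{itemize}
Your domain has infinite index. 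Every mapping class of $\Sigma$ lifts to a bundle automorphism of $M$, so $\MCG(M)\to\mathrm{Out}(\pi)$ is onto $\mathrm{Out}(\pi_1\Sigma)$. There is no ``finite-index control'', and ``evaluating orbit-sums'' is not an extension procedure.

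The paper supplies three ingredients you are missing.
\begin{enumerate}
\item \emph{Reparametrization.} For pointed orientation-preserving $f$, it reparametrizes $f\circ\S_0$ by the Dehn--Nielsen diffeomorphism $f_\dagger$ of $\Sigma$ realizing the automorphism induced by $\mathrm{pr}\circ f\circ\S_0$. It uses $f_*x=\pm x$ on $H_2$ to see $\S_0^f\in\mathcal{E}_0$. Then $\Dax(\S_0,\S_0^f)$ is a crossed homomorphism on all of $\MCG^+_*(M)$. Projecting to $\ZZ[A]$, where $A$ is the set of $\aut(\pi)\times\ZZ/2$-orbits in $\mathcal{C}$, kills $f_*$ and gives a homomorphism $\Phi_1$.
\item \emph{Point-pushes.} For a point-push $f$ one may take $f_\dagger$ to be the point-push on $\Sigma$. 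Then $\S_0^f=\S_0$ exactly, so $\Phi_1$ descends to $\MCG^+(M)$.
\item \emph{Orientation reversal.} Orientation-reversing classes send $\S_0$ to the homotopy class of $\S_{-1}$. Since $\Dax$ only compares elements of the same $\mathcal{E}_d$, they cannot be measured against $\S_0$. The paper uses the involution $\tau$ with $\tau\circ\S_0\simeq\S_{-1}$ (Lemma \ref{taus0}) and the symmetrization $\Phi_2+\Phi_2'$. This is invariant under conjugation by $\tau$, and $\tau^2=\mathrm{id}$, so it extends over the coset $[\tau]\cdot\MCG^+(M)$.
\end{enumerate}

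\textbf{Gap in Step 3: the key computation is left open.}
You call verifying the Dax computation ``the real difficulty'' and stop. The detour through Theorem \ref{thm3} (first an embedding, then a diffeomorphism realizing it) is unnecessary. The paper works directly with a self-referential barbell $\beta=\beta_{(S_0,m,\hat\gamma)}$ with associated class $c$, supported in $M_2$ away from $D\times S^2$ and the basepoint. By \cite[Construction 5.3]{G19}, $\beta$ changes $D_0$ by a tube to $m$ plus a self-referential disk along $\hat\gamma$. So \cite[Theorem 4.9(ii)]{G21} gives $\Dax(\S_0,\beta\circ\S_0)=c+c^{-1}$, and likewise for $\S_{-1}$. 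Hence $\Phi(\beta)=2\mathscr{P}(c)$, with $\beta$ homotopic to the identity.

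This only gives $2\ZZ[A]\subseteq\Phi(\MCG_0)$. Surjectivity onto $\ZZ^\infty$ therefore comes from corestricting $\Phi$ to its image, a subgroup of the free abelian group $\ZZ[A]$ containing $2\ZZ[A]$. It does not come from ``choosing the target basis to be these orbit classes''; whether the image exceeds $2\ZZ[A]$ is left open in the paper.

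Your observation that the classes $[a^n]$ lie in distinct orbits is correct: the exponent of the maximal root is preserved by automorphisms and by inversion. It can replace \cite[Lemma 5.10]{LWXZ25} for the infiniteness of $A$.
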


Theorem \ref{thm2} has a direct corollary that the mapping class group of $\Sigma \ltimes S^2$ is infinitely generated.
In \cite{Guo25}, 
Guo constructed a surjective homomorphism $\Phi': \MCG(\Sigma \ltimes S^2) \rightarrow \ZZ^\infty$,
by lifting a self-diffeomorphism of $\Sigma \ltimes S^2$ to its finite covering spaces and 
utilizing the Dax invariants in \cite{LWXZ25} for $\Sigma \times S^2$.
We give another construction of such surjective homomorphism in this paper, by the Dax invariants for $\Sigma \ltimes S^2$. 
The proof of Theorem \ref{thm2} relies on 
the Dax invariants in Theorem \ref{thm1},
the notions of self-referential disks in \cite{G21},
and the self-referential barbell diffeomorphisms of 4-manifolds.

The paper is organized as follows.
In Section \ref{sec:pre}, 
we review some notions about the Dax invariants for disks and the barbell diffeomorphisms,
construct the 4-manifolds $\Sigma \ltimes S^2$, and show some properties of $\Sigma \ltimes S^2$ as well.
In Section \ref{sec:dax},
we construct the Dax invariants for some embeddings of surfaces in $\Sigma \ltimes S^2$ and prove Theorem \ref{thm1}.
In Section \ref{sec:isoclass},
we give the isotopy classification of surfaces in $\mathcal{F}_d$ via the Dax invariants and prove Theorem \ref{thm3}.
In Section \ref{sec:mcg},
we study the mapping class group of $\Sigma \ltimes S^2$ via the Dax invariants 
and prove Theorem \ref{thm2}.

\subsection*{Acknowledgement} 
The author would like to thank his advisor Yi Xie for his continuous guidance and assistance throughout the entire research process.

\section{Preliminaries}\label{sec:pre}

\subsection{The Dax invariants for disks}\label{subsec:diskdax}

In this subsection, we review the definition of the Dax invariants for embedded disks in 4-manifolds with boundaries in \cite{G21}.

We recall the definition of neat maps.

\begin{defn} 
  A smooth map $i:X \rightarrow Y$ between two smooth manifolds with boundaries is neat, 
  if $i^{-1}(\partial Y) = \partial X$, and $i$ is transverse to $\partial Y$.
\end{defn}

Then we provide some notations of the embedding spaces necessary in what follows.

\begin{defn}\label{emb1}
  Let $X,Y$ be smooth manifolds with boundaries.
  \begin{itemize}
    \item[(1)] Define ${\rm Emb}(X,Y)$ to be the space consisting of all the neat embeddings $i$ of $X$ into $Y$, with $C^\infty$ topology.
    \item[(2)] Given $i_0 \in {\rm Emb}(X,Y)$, define ${\rm Emb}_\partial (X,Y)$ to be the subspace of ${\rm Emb}(X,Y)$ 
    consisting of the embeddings $i:X \rightarrow Y$ satisfying $i|_{\partial X} = i_0 |_{\partial X}$.
    \item[(3)] Suppose ${\rm codim}(X,Y) \geq 1$. Given $i_0 \in {\rm Emb}(X,Y)$, define ${\rm Emb}'_{\partial}(X,Y)$ to be the space
    consisting of the embeddings in ${\rm Emb}_\partial (X,Y)$, equipped with a nowhere vanishing section of its normal bundle.
    \item[(4)] Suppose ${\rm codim}(X,Y) \geq 1$. Given $i_0 \in {\rm Emb}(X,Y)$, define ${\rm Emb}^{\rm Fr}_{\partial}(X,Y)$ to be the space
    consisting of the embeddings in ${\rm Emb}_\partial (X,Y)$, equipped with a frame of its normal bundle.
  \end{itemize}
\end{defn}

\begin{defn}\label{emb2}
  Let $X,Y$ be smooth manifolds with basepoints. Let $x_0,y_0$ be the basepoints of $X,Y$ respectively.
  \begin{itemize}
    \item[(1)] Define ${\rm Emb}_\bullet (X,Y)$ to be the space consisting of all the embeddings $i$ of $X$ into $Y$ satisfying $i(x_0)=y_0$.
    \item[(2)] Suppose $X'$ is a submanifold of $X$ of codimension 0 containing $x_0$. Given $i_0 \in {\rm Emb}_\bullet(X,Y)$, 
    define ${\rm Emb}^{[i_0]}_\bullet (X',Y)$ to be the space consisting of all the embeddings $i$ of $X'$ into $Y$ satisfying $i(x_0)=y_0$,
    and $i|_{X'}$ is homotopic to $i_0|_{X'}$ relative to $x_0$.
  \end{itemize}
\end{defn}

Let $X$ be an oriented 4-manifold with boundaries.
Let $I = [0,1]$.
We review the Dax isomorphism in 4-manifolds following \cite{G21}.
Let $I_0$ be an element of ${\rm Emb}(I,X)$.
We have a tracing homomorphism
\begin{equation}\label{tracemap}
  \mathscr{F} : \pi_1({\rm Emb}_\partial (I,X); I_0) \rightarrow \pi_2(X;I_0).
\end{equation} 
Here $\pi_2(X;I_0)$ is the second homotopy group of $X$ relative to the image of $I_0$.
The tracing homomorphism maps the homotopy class of a loop $\{\gamma_t\}_{t \in [0,1]}$ to the homotopy class of $\gamma_-(-):I \times I \rightarrow X$ in $\pi_2(X;I_0)$.
Denote the kernel of $\mathscr{F}$ as $\pi_1^D({\rm Emb}_\partial (I,X); I_0)$.
By the Dax isomorphism theorem \cite[Theorem 0.2]{G21}, there is an abelian group homomorphism
$$ d_3: \pi_3(X;I_0) \rightarrow \ZZ[\pi_1(X;I_0) \setminus \{1\}], $$
and an abelian group isomorphism
$$ \Dax: \pi_1^D({\rm Emb}_\partial (I,X); I_0) \rightarrow \ZZ[\pi_1(X;I_0) \setminus \{1\}]/{\rm Im}(d_3). $$

Here we review the definition of the homomorphism $\Dax$.
Given an element $\alpha$ in $\pi_1^D({\rm Emb}_\partial (I,X); I_0)$,
it has a representative $\gamma_t : I \rightarrow {\rm Emb}_\partial (I,X)$ with endpoints $I_0$.
There is a natural inclusion ${\rm Emb}_\partial (I,X) \rightarrow {\rm Imm}_\partial (I,X)$,
where ${\rm Imm}_\partial (I,X)$ is the space consisting of immersions $i$ from $I$ to $X$ with the same endpoints as $I_0$. 
The loop $\gamma_t : I \rightarrow {\rm Imm}_\partial (I,X)$ is null homotopic.
By a transversal argument, we may select the homotopic $H(t,s): I \times I \rightarrow {\rm Imm}_\partial (I,X)$ to satisfy the following conditions.
\begin{itemize}
  \item The path $H(t,0)$ is the constant path $c_{I_0}$. The path $H(t,1)$ is identical to $\gamma_t$.
  \item The homotopy $H(t,s)$ maps $I \times I$ into ${\rm Emb}_\partial (I,X)$ except for finitely many points in $I \times I$.
  \item Denote the except points in $I \times I$ as $(t_1,s_1),\cdots,(t_n,s_n)$.
  For $1 \leq j \leq n$, the element $H(t_j,s_j)$ is an immersion in ${\rm Imm}_\partial (I,X)$ with only one double point.
\end{itemize}
For $1 \leq j \leq n$, denote $\iota_j = H(t_j,s_j)$. 
Then there are $x_1 < x_2 \in I$ satisfying $\iota_j(x_1) = \iota_j(x_2)$.
Choose the basepoint of $X$ to be the starting point of $I_0$.
The concatenation of $\iota_j|_{[0,x_2]}$ and the reverse of $\iota_j|_{[0,x_1]}$ gives an element in $\pi_1(X)$.
Denote the element in $\pi_1(X)$ as $g_j$.
The orientation of $X$ and the local orientation $${\rm d}H(-,-)(x_1)|_{(t_j,s_j)} \oplus {\rm d}H(-,-)(x_2)|_{(t_j,s_j)}$$
give a sign $\epsilon_j \in \{1,-1\}$.
Then we define
\begin{equation*}
  \Dax(\alpha) = \sum_{j=1}^{n} \epsilon_j g_j \in \ZZ[\pi_1(X;I_0) \setminus \{1\}]/{\rm Im}(d_3).
\end{equation*}

We review the relative Dax invariants for embedded disks following \cite{G21}. 
Let $D_0$ be a neat embedding of 2-disk in $X$. 
By the assumption of neatness, 
the embedding space ${\rm Emb}_\partial (D^2,X)$ is homotopic to the its subspace $E$ consisting of the embedding $i: D^2 \hookrightarrow X$ 
that coincides with $D_0$ in a neighbourhood $N(\partial D^2)$ of $\partial D^2$.
We select the neatly embedded arc $I_0$ to lie in the image of $N(\partial D^2)$ under $D_0$.
For $i \in \pi_0(E)$, we obtain a path $\{I_t\}$ in ${\rm Emb}_\partial (I,X)$ given by scanning $i_1(D^2)$ from $I_0$ to another arc $I_1$ in $D_0(N(\partial D^2))$.
For $i_1, i_2 \in \pi_0(E)$, 
since they coincide on $N(\partial D^2)$, we may assume that the scanning families of $i_1$ and $i_2$ have the same endpoints.
Then we can concatenate the scanning paths of $i_1$ and $i_2$ to obtain a loop in ${\rm Emb}_\partial (I,X)$ with basepoint $I_0$.
Denote the homotopy class of the loop as $\langle i_1 , i_2 \rangle$.
Then we get the scanning map
\begin{equation}\label{scmap}
  \langle - , - \rangle: \pi_0({\rm Emb}_\partial (D^2,X)) \times \pi_0({\rm Emb}_\partial (D^2,X)) \rightarrow \pi_1({\rm Emb}_\partial (I,X); I_0).
\end{equation}
For neat embeddings $i_1, i_2$ of disks in $X$, assume that they are homotopic relative to the boundaries.
Then we have $\langle i_1 , i_2 \rangle \in \pi_1^D({\rm Emb}_\partial (I,X))$.
We define the Dax invariant
$$ \Dax (i_1,i_2) = \Dax(\langle i_1 , i_2 \rangle) \in \ZZ[\pi_1(X;I_0) \setminus \{1\}]/{\rm Im}(d_3).$$

\begin{rem}
  Since the self-diffeomorphism group of $D^2$ relative to the boundary is contractible according to \cite[Example 6.5.2]{M16},
  the Dax invariant $\Dax(i_1,i_2)$ only depends on the image of $i_1$ and $i_2$.
\end{rem}

\begin{rem}
  According to \cite[Lemma 1.4]{K24},
  there is an involution $\sigma$ on $\ZZ[\pi_1(X)\setminus \{1\}]$ sending $g$ to $g^{-1}$.
  It holds that $\sigma({\rm Im} d_3) = {\rm Im} d_3$.
  Thus, the action of $\sigma$ on $\ZZ[\pi_1(X) \setminus \{1\}]$ induces its action on $\ZZ[\pi_1(X) \setminus \{1\}]/ {\rm Im} d_3$.
  Actually, the codomain of the Dax invariants for disks is $(\ZZ[\pi_1(X) \setminus \{1\}]/ {\rm Im} d_3)^\sigma$,
  namely, the invariant subgroup of $\ZZ[\pi_1(X) \setminus \{1\}]/ {\rm Im} d_3$ under the action of $\sigma$.
\end{rem}

\subsection{The barbell diffeomorphisms}\label{bbldiff}

In this subsection, we review the barbell diffeomorphisms of 4-manifolds in \cite{G19}.

\begin{defn}
  Let $M$ be a smooth manifold may with boundaries.
  Let $\Diff_\partial (M)$ be the group consisting of self-diffeomorphisms $f$ of $M$, satisfying that $f|_{\partial M} = {\rm id}_{\partial M}$.
  We define the mapping class group of $M$ 
  $$\MCG_\partial (M) = \pi_0 (\Diff_\partial (M)).$$
  If $M$ is a smooth manifold without boundary, we may denote $\Diff_\partial (M), \MCG_\partial(M)$ as $\Diff(M)$ and $\MCG(M)$ for simplicity. 
\end{defn}

Let $\mathcal{B} = (S^2 \times D^2) \natural (S^2 \times D^2)$, namely, the boundary connected sum of two copies of $S^2 \times D^2$.
We call $\mathcal{B}$ the standard barbell.
We call the first (resp. second) copy of $S^2 \times D^2$ as the first (resp. second) cuff of the standard barbell.
Regarding the first cuff as the complement of a tubular neighbourhood $\nu(I_0)$ of a neatly embedded arc $I_0$ embedded in $D^4$,
we obtain that $\mathcal{B} = D^4 \natural (S^2 \times D^2) \setminus \nu(I_0)$.
We swipe the arc $I_0$ around the second cuff to get an element $I_t$ in $\pi_1({\rm Emb}^{\rm Fr}_{\partial}(I,D^4 \natural (S^2 \times D^2));I_0)$.
Under the arc-pushing map 
$$ \partial: \pi_1({\rm Emb}^{\rm Fr}_{\partial}(I,D^4 \natural (S^2 \times D^2));I_0) \rightarrow \pi_0({\rm Diff}_\partial(D^4 \natural (S^2 \times D^2) \setminus \nu(I_0))) = \MCG_\partial(\mathcal{B}),$$
the image $\beta$ of $I_t$ is what we call the standard barbell diffeomorphism.
We may abuse the notation $\beta$ and call a representative of $\beta$ the standard barbell diffeomorphism when there is no ambiguity.

Let $D_0 = \{*\} \times D^2$ be a neatly embedded disk in $S^2 \times D^2$.
We can regard $D_0$ as a neatly embedded disk in the first cuff of the standard barbell.
Then the standard barbell diffeomorphism $\beta$ can act on the disk $D_0$.
According to \cite[Construction 5.3]{G19},
the disk $\beta(D_0)$ is obtained by tubing a copy of the embedded sphere $S^2 \times \{*\}$ 
in the second cuff to $D_0$ along the bar connecting the cuffs.

\begin{defn}
  Let $X$ be a 4-manifolds, and $A,B$ be subsets of $X$.
  Define $\hat{\pi}_1(X;A,B)$ to be the set consisting of the connected components of the set 
  $$\{ \text{path} \ \gamma:I \rightarrow X \ | \ \gamma(0) \in A, \gamma(1) \in B, \gamma(0,1) \cap (A \cup B)= \emptyset \}.$$
\end{defn}

Let $X$ be a 4-manifolds. 
The defining data for a barbell diffeomorphism of $X$ consists of 
\begin{itemize}
  \item two embedded sphere $S_0, S_1$ with trivial normal bundles,
  \item an element $\gamma \in \hat{\pi}_1(X;S_0,S_1)$.
\end{itemize}
These data defines a barbell diffeomorphism $\beta_{(S_0,S_1,\gamma)}$ of $X$.
In what follows, we introduce the construction of such barbell diffeomorphisms. 

Given the defining data $(S_0,S_1,\gamma)$.
The closed regular neighbourhood $\bar{\nu}(S_0 \cup \gamma \cup S_1)$ is diffeomorphic to $\mathcal{B}$.
We fix a diffeomorphism $\varphi : \mathcal{B} \rightarrow \bar{\nu}(S_0 \cup \gamma \cup S_1)$.
We extend the self-diffeomorphism
$ \varphi \circ \beta \circ \varphi^{-1}$ on $\bar{\nu}(S_0 \cup \gamma \cup S_1)$
by identity on $X \setminus \bar{\nu}(S_0 \cup \gamma \cup S_1)$.
Then we obtain the implemented barbell diffeomorphism
$$ \beta_{(S_0,S_1,\gamma)}: X \rightarrow X.$$
\begin{rem}
  According to \cite[Remark 2.5]{LWXZ25}, different choices of the diffeomorphisms $\varphi$ do not change the isotopy class of $\beta_{(S_0,S_1,\gamma)}$ in $\MCG(X)$.
\end{rem}

\subsection{The nontrivial orientable $S^2$-bundle over $\Sigma$}\label{subsec:tstspace}
In this subsection, we study the space $\Sigma \ltimes S^2$ and embedded surfaces in $\Sigma \ltimes S^2$.

Let $\Sigma$ be an orientable closed surface with genus $g \geq 1$. 
We consider orientable $S^2$-fiber bundles over $\Sigma$.
Up to bundle isomorphism, there are exactly two $S^2$-fiber bundles over $\Sigma$, namely, the trivial bundle $\Sigma \times S^2$ and the twisted one $\Sigma \ltimes S^2$.
These bundles are characterized by their second Stiefel-Whitney classes $w_2 \in H^2(\Sigma;\ZZ_2)$.
Moreover, as smooth 4-manifolds, $\Sigma \times S^2$ and $\Sigma \ltimes S^2$ are not diffeomorphic,
since their intersection forms are not isomorphic.
 
Since $\pi_1 (SO(3)) = \ZZ / 2$, we can choose $\phi : S^1 \rightarrow SO(3)$ such that the homotopy class of $\phi$ is nontrivial.
Let $D$ be a disk in $\Sigma$.
We can construct $\Sigma \ltimes S^2$ by defining 
\begin{equation}\label{construct}
  \Sigma \ltimes S^2 = (\Sigma-D)\times S^2 \cup_\phi D\times S^2.
\end{equation}
To be precise, we identify $\partial (\Sigma - D) = \partial D = S^1$, 
then we attach $(\Sigma-D)\times S^2$ and $D\times S^2$ by identifying $(x,y)$ and $(x,\phi(x)y)$ for any $x \in S^1, y \in S^2$.

We compute the homotopy groups and the cohomology of $ \Sigma \ltimes S^2$.
By the homotopy long exact sequences induced by fibrations, 
the induced map of the projection $\Sigma \ltimes S^2 \rightarrow \Sigma$ on the fundamental groups is an isomorphism.
The induced maps of the fiber inclusion $S^2 \rightarrow \Sigma \ltimes S^2$ on higher homotopy groups are also isomorphisms. 
Therefore, it holds that 
$$ \pi_1 (\Sigma \ltimes S^2) \cong \pi_1(\Sigma); \quad \pi_k(\Sigma \ltimes S^2) \cong \pi_k(S^2), \text{for} \ k \geq 2.$$
We consider the Serre spectral sequence for the fiber bundle $\Sigma \ltimes S^2$ with coefficients in $\ZZ$.
Because $\Sigma \ltimes S^2$ is an orientable $S^2$-fiber bundle,
the fundamental group of the base space $\Sigma$ acts trivially on the cohomology of the fiber $S^2$.
Therefore, the $E_2$-page satisfies that $E_2^{p,q} = H^p(\Sigma,H^q(S^2,\mathbb{Z}))$. 
By the dimensional restriction, all differentials on the $E_2$-page must be trivial.
We obtain the following group isomorphism
\begin{equation}\label{cohom}
  H^*(\Sigma \ltimes S^2) \cong H^*(\Sigma) \otimes H^*(S^2).
\end{equation}
By a geometric computation, the intersection form of $\Sigma \ltimes S^2$ is
\begin{equation*}
\begin{pmatrix}
1 & 1 \\
1 & 0
\end{pmatrix},
\end{equation*}
with respect to the basis $(\alpha \otimes 1, 1 \otimes \beta)$ of $H^2(\Sigma \ltimes S^2)$ as in the isomorphism (\ref{cohom}).
Here $\alpha,\beta$ are generators of $H^*(\Sigma), H^*(S^2)$ respectively.

We fix a cellular structure of $\Sigma$. 
It has one 0-cell $e^0$, $2g$ 1-cells $e^1_1, \cdots, e^1_{2g}$, and one 2-cell $e^2$.
The cellular structure induces a handle decomposition of $\Sigma$.
It has one 0-handle $H_0$, $2g$ 1-handles $H_1^1, \cdots, H_1^{2g}$, and one 2-handle $H_2$.
We require that the 2-disk $D$ in (\ref{construct}) is a smoothly embedded disk in the two handle $H_2$.
Let the basepoint of $\Sigma$ be $b_0 = e^0$. 

Next, we construct several pointed embeddings of $\Sigma$ in $\Sigma \ltimes S^2$. 
To facilitate the construction, we fix a model for $\Sigma \ltimes S^2$.
Let the nontrivial loop $\phi: S^1 \rightarrow SO(3)$ in (\ref{construct}) to be
\begin{equation}\label{model}
  \phi(\theta) = \begin{pmatrix}
  \cos \theta & \sin \theta & 0 \\
  -\sin \theta & \cos \theta & 0 \\
  0 & 0 & 1
  \end{pmatrix}.
\end{equation}

\begin{defn}\label{defofsd}
  Define the embedding
  $$\S_0: \Sigma \hookrightarrow \Sigma \ltimes S^2, \quad x \mapsto (x,(0,0,1)^T).$$ 
  It is well defined because $(0,0,1)^T$ is a fixed point in $S^2$ under the action of $\phi(\theta)$ for all $\theta \in S^1$.  
  Given a postive integer $d$, we choose $d$ distinct points $p_1,\cdots,p_d$ in $\Sigma$, requiring that they are in the 2-handle $H_2$. 
  Let $\S_d: \Sigma \rightarrow \Sigma \ltimes S^2$ to be the embedding obtained by 
  resolving the intersections of $\S_0$ and the $S^2$-fibers of $\Sigma \ltimes S^2$ over $p_1,\cdots,p_d$. 
  Given a negative integer $d$, we define $\S_d$ in the same way, 
  but resolve the intersections of $\S_0$ and the $S^2$ fibers over $p_1,\cdots,p_{-d}$ with the opposite orientation.
\end{defn}

Let the basepoint of $\Sigma \ltimes S^2$ to be $b = \S_0(b_0)$. 
Now we get a family $\{\S_d\}_{d \in \ZZ}$ of pointed embeddings of $\Sigma$ in $\Sigma \ltimes S^2$.

\begin{rem}
  According to Proposition \ref{bijmap}, 
  any pointed map $f: \Sigma \rightarrow \Sigma \ltimes S^2$ inducing the identity homomorphism on the fundamental groups,
  is homotopic to $\S_d$ for some $d \in \ZZ$.
\end{rem}

\section{Definition of the Dax invariants for embedded surfaces}\label{sec:dax}

In this section, we are devoted to defining the Dax invariants for the embedded surfaces in $\Sigma \ltimes S^2$ we constructed in Definition \ref{defofsd}, 
following the procedure of \cite{LWXZ25}.
For simplicity, we denote $\Sigma \ltimes S^2$ as $M$ in the following content.

We remark that many arguments in \cite[Section 3 and Section 4]{LWXZ25} are applicable to the setting of this paper. 
We provide only a brief overview of the applicable arguments, 
while presenting a detailed account of the cases requiring modification.
The key modifications are given in Lemma \ref{fiberhomotopy}(3), Proposition \ref{bijmap}, Lemma \ref{K0} and Lemma \ref{naturality}. 

\subsection{The fibration tower of embedding spaces}

There is a fibration tower 
\begin{equation}\label{fbtw}
  {\rm Emb}^{[\S_d]}_\bullet (\Sigma,M) \xrightarrow{r_2}  {\rm Emb}^{[\S_d]}_\bullet (H_0\cup H_1,M) \xrightarrow{r_1} {\rm Emb}^{[\S_d]}_\bullet (H_0,M) \xrightarrow{r_0} *.
\end{equation}
useful for defining the Dax invariants. 
For $i \in {\rm Emb}^{[\S_d]}_\bullet (\Sigma,M)$, $r_2(i)$ is the restriction of $i$ to the submanifold $H_0 \cup H_1 \subset \Sigma$.
Similarly, the maps $r_1,r_0$ are defined by restrictions of the domains.
Denote the domain of $r_j$ as $E_j$, for $j=0,1,2$. 
Let $F_j$ be the fiber of $r_j$, for $j=0,1,2$.

Let $U$ be a disk containing $(0,0,1)^T$ in $S^2$. 
In the model of $M$ constructed in (\ref{construct}) and (\ref{model}), 
let $\nu(\S_d(e^0)) = H_0 \times U$, $\nu(\S_d(e_j^1))= (H_0 \cup H_1^j) \times U$.

\begin{defn}\label{m1m2}
  Let \( M_1 \) be the closure of \( M \setminus \nu(\S_d(e^0)) \). 
  Let \( M_2 \) be the closure of \( M \setminus \left( \cup_{j=1}^{2\ell} \nu(\S_d(e_j^1)) \right) \). 
  We smooth corners and regard both \( M_1 \) and \( M_2 \) as codimension-0 submanifolds of \( M \).
\end{defn}

\begin{defn}
  We identify $H_1 \cap {\rm sk}^1$ to $\sqcup_{2g} I$. 
  For $\S_d |_{\sqcup_{2g} I} \in {\rm Emb}(\sqcup_{2g}I,M_1)$, 
  define the embedding space ${\rm Emb}'_\partial(\sqcup_{2g} I,M_1)$ as Definition \ref{emb1}(3).
  There is a nonvanishing section $\eta:\sqcup_{2g}I \rightarrow T\Sigma$ of the normal bundle of the embedded arcs $\sqcup_{2g}I$ in $\Sigma$.
  We equip the embedding $\S_d |_{\sqcup_{2g} I}$ with the frame $({\rm d}\S_d) \circ \eta$,
  and denote it as $\S_d^{(1)}$.
  Note that $\S_d^{(1)}$ is an element of ${\rm Emb}'_\partial (\sqcup_{2g} I,M_1)$.
\end{defn}

\begin{defn}
  For $\S_d|_{H_2} \in {\rm Emb}(D^2,M_2)$,
  define the embedding space ${\rm Emb}_\partial(D^2,M_2)$ as Definition \ref{emb1}(2).
  Denote ${\rm Emb}^{\rm hom}_\partial (D^2,M_2)$ to be the subspace of ${\rm Emb}_\partial(D^2,M_2)$,
  consisting of the embeddings $i$ in ${\rm Emb}_\partial(D^2,M_2)$ such that 
  $$i_* = (\S_d|_{H_2})_* : H_2(D^2,\partial D^2) \rightarrow H_2(M_2,\partial M_2).$$ 
  Here $i_*$ and $(\S_d|_{H_2})_*$ are induced homomorphisms of $i$ and $\S_d|_{H_2}$ on the homology.
\end{defn}

In what follows, we want to show the homotopy types of the fibers.

\begin{lem}\label{fiberhomotopy}
  It holds the following.
  \begin{itemize}
    \item[(1)] The fiber $F_0$ is homotopy equivalent to the real Grassmannian manifold $V_{2,4}$.
    \item[(2)] The fiber $F_1$ is homotopy equivalent to the connected component of ${\rm Emb}'_\partial (\sqcup_{2g} I,M_1)$ that contains $\S_d^{(1)}$.
    \item[(3)] The fiber $F_2$ is identical to ${\rm Emb}^{\rm hom}_\partial (D^2,M_2)$.
  \end{itemize}
\end{lem}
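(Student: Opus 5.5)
We follow \cite[Section 3]{LWXZ25} and treat the three fibers separately, using throughout the standard fact that restriction maps between embedding spaces are (Serre) fibrations (Cerf--Palais). The fiber of each $r_j$ over the restriction of $\S_d$ is identified by fixing the embedding on the smaller handle and asking which extensions to the next handles exist in the complement.

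\textbf{The fiber $F_0$.} The space ${\rm Emb}^{[\S_d]}_\bullet(H_0,M)$ consists of pointed embeddings of a $2$-disk sending $b_0$ to $b$. Shrinking the disk radially to its center gives a deformation retraction onto the space of injective linear maps $T_{b_0}\Sigma \to T_bM\cong\RR^4$. This space deformation retracts, by Gram--Schmidt, onto orthonormal $2$-frames in $\RR^4$, i.e.\ onto the Stiefel manifold $V_{2,4}$. The homotopy condition relative to $b_0$ is vacuous for a disk, so $F_0=E_0\simeq V_{2,4}$.

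\textbf{The fiber $F_1$.} Fix the embedding on $H_0$ equal to $\S_d|_{H_0}$. The extensions to $H_1$ are embeddings of the $2g$ bands $H_1^j\cong I\times I$ into $M_1$ with fixed attaching region. Shrinking each band onto its core arc $e^1_j\cap H_1$ identifies this space, up to homotopy, with embeddings of $\sqcup_{2g}I$ into $M_1$ rel boundary, each arc equipped with the normal direction determined by the band. The normal direction is fixed near the endpoints, since there the band agrees with $\S_d$. This is exactly ${\rm Emb}'_\partial(\sqcup_{2g}I,M_1)$. The condition ``homotopic to $\S_d$ rel $b_0$'' on $H_0\cup H_1$ picks out the component of $\S_d^{(1)}$. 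For this we use that a map of a wedge of circles is determined up to pointed homotopy by its $\pi_1$-image, and that $\pi_1(M_1)\cong\pi_1(M)$ because $\nu(\S_d(e^0))$ is a $4$-ball. The step requiring some care is that homotopy of the framed arcs rel endpoints in $M_1$ is equivalent to pointed homotopy in $M$. We handle it by the same argument as in \cite[Lemma 3.3]{LWXZ25}, which only uses $\pi_1$ and the codimension-$0$ excision.

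\textbf{The fiber $F_2$.} This is the case requiring modification. Fix $i|_{H_0\cup H_1}=\S_d|_{H_0\cup H_1}$. By shrinking we may take the extension to agree near $\partial H_2$ with $\S_d$. An extension of $\S_d|_{H_0\cup H_1}$ to all of $\Sigma$ is then the same as a neat embedding of $H_2\cong D^2$ into $M_2$ rel boundary, so $F_2\subset{\rm Emb}_\partial(D^2,M_2)$. The content of (3) is that the condition ``$i\simeq\S_d$ rel $b_0$'' is equivalent to the homological condition $i_*=(\S_d|_{H_2})_*$ on $H_2(D^2,\partial D^2)\to H_2(M_2,\partial M_2)$. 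Equivalently, since both disks share boundary, the glued sphere $i|_{H_2}\cup\overline{\S_d|_{H_2}}$ should represent zero.

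\emph{Necessity.} If $i\simeq\S_d$ rel $b_0$, then $i_*[\Sigma]=\S_d{}_*[\Sigma]$ in $H_2(M)$. Restricting to $H_2(M_2,\partial M_2)$ via excision, $H_2(M,\nu(\S_d(\mathrm{sk}^1)))\cong H_2(M_2,\partial M_2)$, gives the homological equality.

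\emph{Sufficiency.} This is the main obstacle; we would use Proposition~\ref{bijmap}. Pointed maps $\Sigma\to M$ inducing the identity on $\pi_1$ agreeing with $\S_d$ on the $1$-skeleton differ on the $2$-cell by an element of $\pi_2(M)\cong\ZZ$, generated by the fiber. So they are classified, via Proposition~\ref{bijmap}, by the integer $d$, which is detected homologically by the intersection number with $\S_{-1}$ or with the fiber. The $\pi_2(M)$-action changes the relative class in $H_2(M_2,\partial M_2)$ by multiples of the fiber class. That class is nonzero there, since the fiber $S'_p$ over a point of $H_2$ lies in $M_2$ and intersects a cocore disk of $H_2$ once. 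Hence equal relative homology classes force equal $d$, and hence homotopy rel $b_0$. Since $\pi_1$ acts on $\pi_2(M)$ trivially (orientable bundle), there is no extra $\ZZ[\pi_1]$-ambiguity, which is where this argument departs from the trivial-bundle case. This yields the identification $F_2={\rm Emb}^{\rm hom}_\partial(D^2,M_2)$.
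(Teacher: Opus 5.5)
Parts (1) and (2) are fine, and like the paper they ultimately rest on \cite{LWXZ25}. You are also right that $V_{2,4}$ is a Stiefel manifold, not a Grassmannian. In (3), your necessity argument (naturality for the map of pairs $(\Sigma,H_0\cup H_1)\to(M,\nu)$ followed by excision) is correct, and more direct than the paper's route through Lefschetz duality and pairing with generators of $H_2(M_2)$.

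The gap is in sufficiency, at the one step that carries it: that the fiber class has nonzero image under $j\colon H_2(M_2)\to H_2(M_2,\partial M_2)$. Your justification is that the fiber $S_0$ over $p$ meets a (co)core disk of $H_2$ once. That pairs $[S_0]$ with a \emph{relative} class, so it only proves $[S_0]\neq 0$ in $H_2(M_2)$; it says nothing about $j[S_0]$. For instance, in $S^2\times D^2$ the sphere $S^2\times\{0\}$ meets $\{pt\}\times D^2$ once. Yet its image in $H_2(S^2\times D^2,S^2\times S^1)$ vanishes, because the intersection form is zero. Since $H_2(M_2,\partial M_2)\cong H^2(M_2)\cong{\rm Hom}(H_2(M_2),\ZZ)$, detecting the $\pi_2(M)$-coordinate requires a \emph{closed} surface in $M_2$ that meets the fiber algebraically nontrivially. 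You do not supply one.

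This is precisely the ingredient in the paper's proof: the antipodal section $\S(x)=(x,(0,0,-1)^T)$. It is well defined because every $\phi(\theta)$ fixes the south pole. It lies in the interior of $M_2$ because it avoids the removed neighbourhoods $(H_0\cup H_1^j)\times U$. It meets every fiber once.

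Suppose the glued sphere $i|_{H_2}\cup\overline{\S_d|_{H_2}}$ represents $k$ times the fiber in $\pi_2(M)\cong\ZZ$. Then its intersection number with $\S$ is $\pm k$. Computed in $M_2$, the same number is $i(D^2)\cdot\S-\S_d(H_2)\cdot\S$, which vanishes by the homological hypothesis. So $k=0$, and Proposition~\ref{bijmap} gives the homotopy rel $b_0$. With this substitution your argument is the paper's.

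Two smaller slips:
\begin{itemize}
\item Intersection with the fiber does not detect $d$: it equals $1$ for every $\S_d$. The pairing with $\S_{-1}$, which is homologous to $\S$, is what detects $d$.
\item Triviality of the $\pi_1$-action on $\pi_2$ holds equally in $\Sigma\times S^2$, so it is not what distinguishes the twisted case.
\end{itemize}
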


The proofs of the first and second terms of Lemma \ref{fiberhomotopy} are the same as those in \cite[Lemma 3.10]{LWXZ25}. 
Let $\langle \Sigma,M \rangle$ be the set containing the pointed homotopy classes from $\Sigma$ to $M$. 
To prove the third term of Lemma \ref{fiberhomotopy}, we need the following proposition to give a specific description of $\langle \Sigma,M \rangle$.

\begin{prop}\label{bijmap}
  There is a bijection $$ \phi: \langle \Sigma,M \rangle \rightarrow {\rm Hom}(\pi_1(\Sigma),\pi_1(M)) \times \pi_2(M).$$
\end{prop}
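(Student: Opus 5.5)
We classify pointed maps $f:\Sigma\to M$ up to pointed homotopy by obstruction theory on the fixed cell structure of $\Sigma$ (one $0$-cell $e^0$, $2g$ one-cells, one two-cell $e^2$). Restricting to the $1$-skeleton $\bigvee_{2g}S^1$ gives $f_*:\pi_1(\Sigma)\to\pi_1(M)$. Conversely, any homomorphism $\rho$ is realized: choose loops representing $\rho$ of the generators to define $f$ on the wedge. The attaching word $\prod[a_j,b_j]$ then maps to a null-homotopic loop, since $\rho$ is a homomorphism, so $f$ extends over $e^2$. Hence the first coordinate of $\phi$ is $[f]\mapsto f_*$, and it is surjective.

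For the second coordinate we fix $\rho$ and study extensions over the $2$-cell. Two pointed maps $f,f'$ with $f_*=f'_*$ are pointed homotopic on the $1$-skeleton, because $\bigvee S^1$ is a wedge and pointed homotopy classes of loops are elements of $\pi_1(M)$. After homotoping $f'$, we may assume $f$ and $f'$ agree on the $1$-skeleton. They then differ on $e^2$ by a difference element $\delta(f,f')\in\pi_2(M)$, obtained by gluing the two disks along their common boundary. The standard fact is that the set of homotopy classes rel the $1$-skeleton of extensions is a torsor for $\pi_2(M)$; the action pinches off a small sphere in $e^2$. This requires no twisting issue, since the $2$-cell has a single attaching map and $\pi_1$ acts on $\pi_2(M)\cong\pi_2(S^2)$ through the action on the fiber. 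That action is trivial here: $M$ is an orientable $S^2$-bundle and the fiber inclusion induces the isomorphism $\pi_2(S^2)\to\pi_2(M)$, with monodromy in $SO(3)$ preserving the orientation of the fiber.

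To make this an honest bijection rather than a torsor, we need a canonical invariant. We compare $f$ with a base map and pass to homology. We use $h:\pi_2(M)\cong\ZZ\to H_2(M)$, the class of the fiber $[S^2]$, which is injective because the fiber has nonzero intersection with $\S_0$, and the intersection form is as computed. Define the second coordinate as $d(f)=\langle f_*[\Sigma],\ \cdot\,\rangle$ paired with the fiber class, or more invariantly the intersection number $f_*[\Sigma]\cdot \S_{-1}$ after subtracting the $\rho$-dependent part. The key computation is that pinching in a sphere $k[S^2]$ changes $f_*[\Sigma]$ by $k[S^2]$, which changes $f_*[\Sigma]\cdot[\S_0]$ by $k$, since the fiber meets $\S_0$ once. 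So this function is equivariant and injective on each $\pi_2$-torsor. We must also check that it is well-defined on homotopy classes, which holds because homology is a homotopy invariant.

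\textbf{Main obstacle.} The delicate point is the part of $f_*[\Sigma]$ that depends on $\rho$: the invariant must be normalized so that for each $\rho$ the torsor is identified with $\ZZ$ compatibly. I would first try to avoid this by composing with the projection $p:M\to\Sigma$. The map $p\circ f$ has degree determined by $\rho$ via $H^2(\pi_1\Sigma)$, and we can choose a fixed section-type lift $s_\rho$ for each $\rho$ (for $\rho=\mathrm{id}$ this is $\S_0$), then set $\phi([f])=(\rho,\delta(f,s_\rho))$. Well-definedness of $\delta$ under changing the homotopy on the $1$-skeleton needs care. A self-homotopy of the $1$-skeleton is a loop in the space of pointed maps $\bigvee S^1\to M$. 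It changes the difference class by an element coming from $\pi_1(\mathrm{Map}_*(\bigvee S^1,M))=\prod\pi_2(M)$, which acts through a boundary of the $2$-cell, i.e. by $\sum_j(1-\rho(a_j))$-type terms. These vanish because the $\pi_1$-action on $\pi_2(M)$ is trivial, as established above. With this, $\phi$ is a bijection.
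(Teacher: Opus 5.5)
Your final argument is essentially the paper's proof: fix a base map $s_\rho$ for each homomorphism, take the difference class $\delta(f,s_\rho)\in\pi_2(M)$ on the $2$-cell, check that it does not depend on the chosen homotopy of the $1$-skeleton, and get injectivity and surjectivity from the torsor structure. Your explicit reason for that independence is exactly what the paper's unexplained step ``$[\psi']=[\psi]$'' tacitly uses: the change is given by Fox-derivative terms, which vanish because $\pi_1(M)$ acts trivially on $\pi_2(M)\cong\pi_2(S^2)$ for an orientable bundle. The homological normalization in your middle paragraph is not needed for this route, and pairing with the fiber class would not detect pinched-in fibers, since the fiber has self-intersection $0$; it is the pairing with $\S_0$ or $\S_{-1}$ that does.
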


\begin{proof}
  We first define a map $\phi_1: \langle \Sigma,M \rangle \rightarrow {\rm Hom}(\pi_1(\Sigma),\pi_1(M))$. 
  For a pointed map $f:\Sigma \rightarrow M$, let $\phi_1([f])$ be the induced homomorphism $f_*$ on the fundamental groups. 
  Note that $\phi_1$ is surjective.
  For any $\varphi \in {\rm Hom}(\pi_1(\Sigma),\pi_1(M))$, we can fix a pointed map $f_\varphi : \Sigma \rightarrow M$ such that $\phi_1([f_\varphi]) = \varphi$.
  
  For pointed maps $g_0,g_1:\Sigma \rightarrow M$, if $\phi_1([g_0]) = \phi_1([g_1])$,
  there is a homotopy $H : {\rm sk}^1 \times I \rightarrow M$ such that $g_i(x) = H(x,i)$ for $i=0,1$ and $x \in {\rm sk}^1$.
  Here ${\rm sk}^1$ is the 1-skeleton of the fixed cellular structure of $\Sigma$.
  Since the map ${\rm sk}^1 \rightarrow \Sigma$ is an inclusion of subcomplex, the homotopy $H$ can be extended to $H':\Sigma \times I \rightarrow M$,
  such that $H'(x,0)=g_0(x)$ for all $x \in \Sigma$.
  Denote $g_0'(x) = H'(x,1)$, then $g_0'|_{{\rm sk}^1} = g_1|_{{\rm sk}^1}$.
  Consider the characteristic map $j: D^2 = e^2 \rightarrow \Sigma$. 
  Since $g_0' \circ j |_{\partial D^2} = g_1 \circ j|_{\partial D^2}$, they induce a map $\psi: S^2 \rightarrow M$.
  We are going to show that the homotopy class $[\psi]$ in $\pi_2(M)$ is independent on the choices of the first homotopy $H$.
  Different choices of $H$ induces an element in $\pi_0(\Omega {\rm Map}_*({\rm sk}^1,M))$, 
  where ${\rm Map}_*({\rm sk}^1,M)$ is the space of pointed maps from ${\rm sk}^1$ to $M$.
  We have the following isomorphisms
  $$ \pi_0(\Omega {\rm Map}_*({\rm sk}^1,M)) \cong \pi_1({\rm Map}_*({\rm sk}^1,M)) \cong \langle S^1 \wedge {\rm sk}^1,M \rangle \cong \oplus_{i=1}^{2g} \pi_2(M).$$ 
  For simplicity, we just describe the action of the first summand of  $\oplus_{i=1}^{2g}\pi_2(M)$ on $[\psi]$.
  Identify the generator $\gamma$ of the first summand of  $\oplus_{i=1}^{2g}\pi_2(M)$ by $H': I \times I \rightarrow M$.
  Assume $H(0,t) = H(1,t) = b$, $H(x,0) = H(x,1) = g_0'(\alpha(x))$ for $x,t \in I$, where $\alpha$ is a simple closed curve on ${\rm sk}^1 \subset \Sigma$ representing $a_1 \in \pi_1(\Sigma)$.
  Then $\gamma \cdot [\psi]$ is represented by the element $\psi'$,
  which is induced from $g_0' \circ j, g_1 \circ j$ and two copies of $H$ by gluing together along the boundaries of their domains.
  Notice that $[\psi']=[\psi]$, hence we can write $[\psi] = \phi_2([g_0],[g_1])$.

  Define $\phi([f]) = (\phi_1([f]), \phi_2([f],[f_0])),$ where $f_0$ is the pointed map fixed before such that $\phi_1([f_0])=\phi_1([f])$. 
  From the above arguments, we conclude that f is well-defined and surjective.
  Next, we will prove that f is injective.

  Given pointed maps $g_0,g_1:\Sigma \rightarrow M$ satisfying $g_0|_{{\rm sk}^1} = g_1|_{{\rm sk}^1}$,
  if $\phi_2(g_0,g_1)=[\psi]$ is trivial in $\pi_2(M)$, we can extend $\psi$ to $\tilde{\psi}: D^3 \rightarrow M$.
  To be specific, we identify $D^2$ with $\{(x,y,z) \in \RR^3 : x^2+y^2+z^2 \leq 1\}$. 
  Assume the restrictions of $\tilde{\psi}$ to the upper and lower hemispheres are $g_0 \circ j$ and $g_1 \circ j$, respectively.
  There is a homotopy given by
  $$ G(x,y,t) = \tilde{\psi} (x,y,(1-2t)\cdot \sqrt{1-x^2-y^2}) .$$
  Notice that $G:D^2 \times I \rightarrow M$ factors through $j \times {\rm id}:D^2 \times I \rightarrow \Sigma \times I$.
  Then $G$ induces $G' : \Sigma \times I \rightarrow M$, which is a homotopy between $g_0$ and $g_1$.
\end{proof}

Now we can prove Lemma \ref{fiberhomotopy}(3).

\begin{proof}[Proof of Lemma \ref{fiberhomotopy}(3)]
  Pick $i \in F_2$. 
  We are going to check that $i_*$ maps the generator $\alpha$ of $H_2(D^2,\partial D^2)$ to the same element in $H_2(M_2;\partial M_2)$ as $(\S_d|_{H_2})_*$.
  It holds that
  $$ H_2(M_2;\partial M_2) \cong H^2(M_2) = H^2(\Sigma \vee S^2) \cong H^2(\Sigma) \oplus H^2(S^2) \cong \ZZ \oplus \ZZ.$$
  The first equation holds via the Lefschetz duality.
  The second equation holds because $M_2$ is homotopic equivalent to $\Sigma \vee S^2$.
  The third equation holds via the Mayer-Vietoris sequence.  
  
  Under such isomorphisms, we only need to check
  $$ \langle i_*(\alpha) , \beta \rangle = \langle (\S_d|_{H_2})_*(\alpha) , \beta \rangle, $$
  for any $\beta \in H_2(M_2) \cong H_2(\Sigma) \oplus H_2(S^2) \cong \ZZ \oplus \ZZ$,
  where we use the notation $\langle -,- \rangle$ to refer the pairing $H^2(M_2) \times H_2(M_2) \rightarrow \ZZ$.
  For $\beta = (1,-1)$, there is an embedding $\S: \Sigma \rightarrow M_2$ representing $\beta$.
  Namely, we can take $\S(x) = (x,(0,0,-1)^T)$ the model of $M$ constructed in (\ref{construct}) and (\ref{model}). 
  Then $\langle i_*(\alpha) , \beta \rangle$ equals to the algebraic intersection number $i(D^2) \cap \S$ in $M_2$. 
  After gluing $i$ with $\S_d|_{H_0 \cup H_1}$, it holds that
  \begin{equation}\label{glueagm}
    \begin{split}
      i(D^2) \cap_{M_2} \S &= (i \cup \S_d|_{H_0 \cup H_1}) \cap_M \S \\
      &= \S_d \cap_M \S \\
      &= (\S_d|_{H_2})(D^2) \cap_{M_2} \S = \langle (\S_d|_{H_2})_*(\alpha) \beta \rangle.
    \end{split}
  \end{equation}
  We use $ \cap_{M_2}, \cap_M$ to denote the algebraic intersection number of $M_2$ and $M$.  
  The first and third equation hold because $\S_d|_{H_0 \cup H_1}$ does not intersect $\S$.
  The second equation holds because $i \in F_2$ and the algebraic intersection number is homotopy invariant.
  For $\beta = (0,1)$, the equation holds by a similar argument.

  Pick $i \in {\rm Emb}^{\rm hom}_\partial (D^2,M_2)$. 
  Here we use the notation in the proof of Proposition \ref{bijmap}. 
  Note that $\phi_1(i \cup \S_d|_{H_0 \cup H_1}) = \phi_1(\S_d).$
  Consider $\phi_2(i \cup \S_d|_{H_0 \cup H_1},\S_d) \in \pi_2(M) \cong \ZZ$.
  It can be identify with the algebraic intersection number with $\S$ in $M_2$.
  Therefore, it holds that 
  $$ \phi_2(i \cup \S_d|_{H_0 \cup H_1},\S_d) = (i(D^2) \cap \S) - ((\S_d|_{H_2})(D^2) \cap \S) = 0.$$
  The last equation holds because $i \in {\rm Emb}^{\rm hom}_\partial (D^2,M_2)$. 
  Now we have shown that $\phi([i \cup \S_d|_{H_0 \cup H_1}]) = \phi([\S_d])$, which concludes the proof.
\end{proof}

There is an action $\pi_1(E_1) \rightarrow {\rm Aut}(\pi_0(F_2))$ by lifting loops in $E_1$. 
Because $\pi_0(E_1)$ is trivial, the quotient map $ \pi_0(F_2)/\pi_1(E_1) \rightarrow \pi_0(E_2) $ 
induced by action is a bijection.
Since $\pi_1(E_0)$ is trivial, there is a surjection $\pi_1(F_1) \rightarrow \pi_1(E_1)$.
Hence we obtain that the quotient map $\pi_0(F_2)/\pi_1(F_1) \rightarrow \pi_0(E_2)$ is a bijection.

\begin{rem}
  The domain of the Dax invariants for surfaces is $\pi_0(E_2) \times \pi_0(E_2)$.
  By utilizing the fibration tower (\ref{fbtw}) and the Dax invariants for disks, 
  we can define a map with domain contained in $\pi_0(F_2) \times \pi_0(F_2)$.
  We may check that the map is invariant under the action of $\pi_1(F_1)$. 
  Thus it induces the Dax invariants for surfaces. 
\end{rem}

\subsection{Properties of $\pi_1(F_1)$}

There is an isomorphism
$$ \pi_1(F_1) \cong \pi_1({\rm Emb}'_\partial (\sqcup_{2g} I,M_1)),$$
due to Lemma \ref{fiberhomotopy}.
In this subsection, we will give several properties of the fundamental group of $F_1$.

We review the construction for two families of elements 
in $\pi_1({\rm Emb}'_\partial (\sqcup_{2g} I,M_1))$ following \cite[Definition 3.20, 3.23]{LWXZ25}.

\begin{defn}
  The basepoint $\S_d^{(1)}$ of ${\rm Emb}'_\partial (\sqcup_{2g} I,M_1)$ is induced by the embedding $\S_d|_{H_1}$.
  Namely, $\S_d^{(1)}$ is a section-equipped embedding of the $2g$ disjoint arcs into $M_1$.
  Given $1 \leq j \leq 2g$,
  if we identify the normal sphere bundle of the $j$-th arc with $I_j \times S^2$,
  constructing a section on $I_j$ is equivalent to specifying an element in $\Omega S^2$.
  Since $\pi_1(\Omega S^2) \cong \pi_2(S^2) \cong \ZZ$, we can pick a loop $\sigma$ presenting a generator of $\pi_1(\Omega S^2)$.
  Let $\hat{\xi}_j$ be the element of $\pi_1({\rm Emb}'_\partial (\sqcup_{2g} I,M_1))$,
  fixing the section-equipped embedding of $I_k$ for $k \ne j$, changing the section on $I_j$ by $\sigma$.
\end{defn}

\begin{defn}\label{S0}
  We fix a point $p$ in $H_2 \subset \Sigma$. Denote the $S^2$-fiber of $M$ at $p$ as $S_0$. 
  Given $1 \leq j \leq 2g$ and $\gamma \in \hat{\pi}_1(I;I_j,S_0)$.
  We define an element $\rho_\gamma$ in $\pi_1({\rm Emb}_\partial (\sqcup_{2g} I,M_1))$.
  For $k \ne j$, we fix the embeddings of $I_k$.
  We isotope $I_j$ along $\gamma$ to $S_0$, spin it around $S_0$, then isotope it back along $\bar{\gamma}$.
  Here $\bar{\gamma}$ refers to the orientation-reversing of $\gamma$.
  The loop $\rho_\gamma$ can be, uniquely up to homotopy, 
  lifted to $\tilde{\rho}_\gamma \in \pi_1({\rm Emb}^{\rm Fr}_\partial (\sqcup_{2g} I,M_1))$,
  since $\pi_1(\Omega SO(3)) = \pi_2(SO(3)) = 0$.
  The loop $\tilde{\rho}_\gamma$ of framed embeddings is restricted to a loop $\hat{\rho}_\gamma$ of section-equipped embeddings.
\end{defn}

The map ${\rm Emb}'_\partial (\sqcup_{2g} I,M_1) \rightarrow {\rm Emb}_\partial (\sqcup_{2g} I,M_1)$, 
given by forgetting the section of an embedding $\sqcup_{2g} I \hookrightarrow M_1$, is a fibration.
Its fiber is homotopic to $\prod_{j=1}^{2g} \Omega S^2$.
Thus, it induces a long exact sequence of homotopy groups
\begin{equation}\label{ses1}
  \cdots \rightarrow \pi_1\big(\prod_{j=1}^{2g}\Omega S^2\big) \cong \ZZ^{2g} \rightarrow \pi_1(F_1) \rightarrow \pi_1{\rm Emb}_\partial (\sqcup_{2g} I,M_1) \rightarrow 0.
\end{equation}
The fundamental group of the fiber is generated by $\{\hat{\xi}_j : j=1,2,\cdots,2g\}$.
By the same argument as the proof for \cite[Lemma 3.25]{LWXZ25}, $\pi_1{\rm Emb}_\partial (\sqcup_{2g} I,M_1)$ is generated by $\{\rho_\gamma\}$.
It deduces that $\pi_1(F_1)$ is the abelian group generated by elements of the form $\hat{\xi}_j$ and $\hat{\rho}_\gamma$.

The fundamental group of $F_1$ acts on $\pi_0(F_2) = \pi_0({\rm Emb}^{\rm hom}_\partial (D^2,M_2))$ by isotopy extension.
Given $\alpha \in \pi_1(F_1)$, $\alpha$ can be represented by an isotopy $f_t:H_1 \rightarrow M_1$. 
By isotopy extension and evaluating the ambient isotopy at $t=1$, we obtain a diffeomorphism $f_\alpha : M_1 \rightarrow M_1$ satisfying $f_\alpha|_{H_1} = {\rm id}$.
For $[D] \in \pi_0(F_2)$, where $D : D^2 \hookrightarrow M_2$ is a neat embedding, 
the action of $\alpha$ on $[D]$ gives $\alpha \cdot [D] = [f_\alpha \circ D]$. 

For $\hat{\xi}_j$, 
the diffeomorphism $f_{\hat{\xi}_j}$ can be regarded as a self-diffeomorphism of $M$ fixing $\nu(H_0) \cup \S_0(H_1)$.
Consider $\hat{\rho}_\gamma$, where $\gamma \in \hat{\pi}_1(M_1;I_j,S_0)$. 
Denote the meridian of $I_j$ in $M$ as $m_j$.
We can select the meridian $m_j$ such that $m_j \subset M_2 \subset M$.
The diffeomorphism $f_{\hat{\rho}_\gamma}$ is a barbell diffeomorphism $\beta_{(m_j,S_0,\gamma)}:M_2 \rightarrow M_2$, which fix the boundary of $M_2$.

We are now heading for a direct sum decomposition of $\pi_1(F_1)$. 
For any $\alpha \in \pi_1(F_1)$, $f_\alpha$ is a self-diffeomorphism of $M$ fixing $\nu(H_0) \cup \S_0(H_1)$. 
The induced bundle map of $f_\alpha$ on the normal bundle of $I_j$ is identical on the boundary and the sub-bundle tangent to $H_1$.
Hence $f_\alpha$ gives an elements $n_j$ of $\pi_1(SO(2)) \cong \ZZ$.
Now we obtain a homomorphism
\begin{equation}\label{mcR} 
  \mathcal{R} : \pi_1(F_1) \rightarrow \ZZ^{2g}, \quad \alpha \mapsto (n_j)_{1 \leq j \leq 2g}.
\end{equation}
By the computation in \cite[Lemma 4.10]{LWXZ25}, it holds that
$$ \mathcal{R}(\hat{\xi}_k) = (2\delta_{jk})_{1 \leq j \leq 2g}, \quad \mathcal{R}(\hat{\rho}_\gamma) = 0.$$
Therefore, the map $\frac{1}{2}\mathcal{R}: \pi_1(F_1) \rightarrow \ZZ^{2g}$ is a splitting map for (\ref{ses1}).

\subsection{The Dax isomorphisms on $M_2$}\label{subsec:daxofm2}
In this subsection, we study the Dax isomorphism on $M_2$. 
We denote the fundamental group of $M$ as $\pi$.
Note that the inclusion maps of $M_1$ and $M_2$ induce isomorphisms of their fundamental groups and $\pi$.

Fix an embedded arc $i: I \hookrightarrow H_2$,
whose image lies in a small collar neighbourhood of $\partial H_2$.
Let $I_0 = \S_d \circ i$.
Then there is an abelian group homomorphism
\begin{equation}
  \F : \pi_1({\rm Emb}_\partial(I,M_2);I_0) \rightarrow \pi_2(M_2;I_0),
\end{equation}
which is just the tracing map defined in (\ref{tracemap}).
Let $\pi_1^D({\rm Emb}_\partial(I,M_2);I_0) = \ker \F$. 

Consider the map $d_3: \pi_3(M_2;I_0) \rightarrow \ZZ[\pi \setminus 1]$.
Since $M_2$ is homotopy equivalent to $\Sigma \vee S^2$, 
$\pi_3(M_2)$ is generated by the Whitehead product of elements in $\pi_2(M_2)$.
Recall that as Definition \ref{S0}, $S_0: S^2 \hookrightarrow M_2$ is the embedding of the fiber over $p \in H_2 \subset \Sigma$.
Without loss of generality, we may assume $S_0$ maps the basepoint of $S^2$ to the image of $I_0$.
Denote the pointed homotopy class of $S_0$ as $[S_0] \in \pi_2(M_2;I_0)$.
We have the isomorphisms
$$\pi_2(M_2) \cong \pi_2(\tilde{M}_2) \cong H_2(\tilde{M}_2) \cong \ZZ[\pi],$$
where $\tilde{M}_2$ is the universal covering of $M_2$.
Namely, $\pi_2(M_2)$ has a generating set $\{\alpha \cdot [S_0]\}_{\alpha \in \pi}$,
where the dot $\cdot$ refers to the action of $\pi \cong \pi_1(M_2)$ on $\pi_2(M_2)$.
Hence the equivariant intersection form $\lambda : \pi_2(M_2;I_0) \times \pi_2(M_2;I_0) \rightarrow \ZZ[\pi]$ of $M_2$ is trivial.
Due to \cite[Proposition 3.14]{KT24}, we obtain that
$$ d_3([a,b]_{\rm Wh}) = \lambda(a,b) + \lambda(b,a) = 0.$$
Therefore, the Dax isomorphism of $M_2$ is
$$ \Dax_{M_2}: \pi_1^D({\rm Emb}_\partial(I,M_2);I_0) \rightarrow \ZZ[\pi \setminus 1].$$

\subsection{Identification of $\pi_2(M_2,\partial D_0)$}\label{idofpi2}

Recall that, as (\ref{scmap}), we have the scanning map
$$ \langle - , - \rangle : \pi_0(F_2) \times \pi_0(F_2) \rightarrow \pi_1({\rm Emb}_\partial (I,M_2);I_0). $$
For $\alpha,\beta$ in $\pi_0(F_2)$, it is a direct try to consider the relative Dax invariants ${\rm Dax}_{M_2} (\langle \alpha , \beta \rangle)$.
However, there is a premise that $\langle \alpha , \beta \rangle$ should be an element of $\pi_1^D({\rm Emb}_\partial (I,M_2);I_0)$.
In this subsection, we show that the premise, up to a priori choice of actions of $\pi_1(F_1)$, can always be achieved.

\begin{defn}
  Denote the image of the embedding $\S_0|_{H_2}: D^2 \hookrightarrow M_2$ as $D_0$.
  Let $\pi_2(M_2,\partial D_0)$ be the set consisting of the maps $D^2 \rightarrow M_2$ coincide with $\S_0|_{H_2}$ on the boundary,
  up to homotopy relative the boundaries.
\end{defn}

Given $\alpha = \sum_{j=1}^{m} \epsilon_j g_j \in \ZZ[\pi]$, where $\epsilon_j \in \{1,-1\}$ and $g_j \in \pi$.
Since the normal bundle of $S_0$ in $M_2$ is trivial, we can pick $m$ copies $S_1, \cdots, S_m$ of $S_0$.
Note that for any $1 \leq j \leq m$, $D_0 \cup S_k$ is simply connected.
So a path with endpoints in $D_0 \cup S_k$ represents an element in $\pi$.
We can pick an embedded arc $\gamma_j: I \hookrightarrow M_2$,
satisfying $\gamma_j(0) \in {\rm Int}(D_0)$, $\gamma_j(1) \in S_j$,
and that $\gamma_j$ represents $g_j \in \pi$. 
We tube $\S_0$ along $\gamma_j$ to $S_k$ (respectively, the orientation reversing of $S_k$)
if $\epsilon_j = 1$ (respectively, $\epsilon_j = -1$), for $1 \leq j \leq m$.
Denote the resulting map as $i_{\alpha}$.
Then there is a map
$$\eta: \ZZ[\pi] \rightarrow \pi_2(M_2,\partial D_0), \quad \alpha \mapsto [i_\alpha].$$
The map $\eta$ has the inverse 
$$\lambda_0 = \lambda(-,\S_0|_{H_2}): \pi_2(M_2,\partial D_0) \rightarrow \ZZ[\pi].$$
Here $\lambda$ refers to the equivariant intersection form defined as \cite[Subsection 3.4]{LWXZ25} for pair of elements in $\pi_2(M_2,\partial D_0)$.

Recall that in Definition \ref{S0}, we define the element $\hat{\rho}_\gamma \in \pi_1({\rm Emb}'_\partial (\sqcup_{2g} I,M_1)).$
Its image under the arc pushing is $\beta_{(m_j,S_0,\gamma)}$.
We denote it as $\beta_\gamma$ for convenience.
The meridian $m_j$ intersect $D_0$ at two points.
We denote the two points as $x_+,x_-$ up to the sign of the intersection.
We isotope the starting point of $\gamma$ in $m_k$ to $x_+$ (respectively, $x_-$) to obtain a path representing an element $g_+$ (respectively, $g_-$) of $\pi$. 
As \cite[Lemma 3.30]{LWXZ25},
the action of barbell diffeomorphisms on $\pi_2(M_2,\partial D_0)$ can be described by the following lemma.

\begin{lem}\label{classchange}
  For any $[i] \in \pi_2(M_2,\partial D_0)$ and any barbell diffeomorphism of the form $\beta_\gamma$, it holds that
  \begin{equation}\label{isotopeh1}
    \lambda_0([\beta_\gamma \circ i]) - \lambda_0([i]) = g_+ - g_- - g_+^{-1} + g_-^{-1}.
  \end{equation}
\end{lem}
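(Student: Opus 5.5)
The plan is to compute the change in $\lambda_0$ geometrically, following the model of \cite[Lemma 3.30]{LWXZ25}. First we express $\beta_\gamma \circ i$ up to homotopy relative to the boundary. By \cite[Construction 5.3]{G19}, a barbell diffeomorphism acts on a surface that meets one cuff sphere by tubing in copies of the other cuff sphere along the bar. Concretely, $\beta_{(m_j,S_0,\gamma)}$ acts on any disk that meets the cuff sphere $m_j$ transversally: at each intersection point, a parallel copy of $S_0$ is tubed in along the arc $\gamma$. There is also a second contribution coming from the swept sphere of the cuff $S_0$ itself.

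Since $\lambda_0$ is a homotopy invariant and is additive under tubing, we can make a reduction. It suffices to take $i$ to be a representative that agrees with $D_0$ away from small spheres, for example $i = i_\alpha$ as constructed via $\eta$. The extra spheres $S_k$ can then be pushed off $\nu(m_j \cup \gamma \cup S_0)$. After this reduction, $\beta_\gamma$ changes only the part near $D_0$, so
\[
\lambda_0([\beta_\gamma\circ i]) - \lambda_0([i]) = \lambda_0([\beta_\gamma\circ \S_0|_{H_2}]) - \lambda_0([\S_0|_{H_2}]),
\]
and $\lambda_0([\S_0|_{H_2}])=0$ by definition of $\eta$. Equivalently, we use that $\beta_\gamma \circ i$ and $i$ differ by a class in $\pi_2(M_2)$ that is independent of $i$.

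Next we compute $\beta_\gamma(D_0)$. The cuff sphere $m_j$ meets $D_0$ at $x_+$ and $x_-$, so the disk $\beta_\gamma(D_0)$ is homotopic to $D_0$ with two copies of $S_0$ tubed on. One copy is tubed at $x_+$ along the path that represents $g_+$ and carries sign $+$. The other is tubed at $x_-$ along the path that represents $g_-$ and carries sign $-$, because that intersection is negative. Under $\lambda(-,\S_0|_{H_2})$ the two tubes contribute $g_+$ and $-g_-$.

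The remaining terms, $-g_+^{-1}+g_-^{-1}$, come from the dual effect. The image $\beta_\gamma(S_0)$ of the other cuff is $S_0$ tubed with $m_j$ along $\gamma$, so the class of the resulting disk also changes by the term in which the roles of the two cuffs are interchanged. Put differently, the fiber copies $S_0$ meet $D_0$ once, and the equivariant intersection of the tubed sphere with $D_0$ picks up the reversed group elements. We will verify this by computing $\lambda(\beta_\gamma\circ \S_0|_{H_2}, \S_0|_{H_2})$ directly. The intersections of the tubed-in copies of $S_0$ with $D_0$ give $g_\pm$. The intersections of $D_0$ with itself, pushed off along the section, pass through the barbell region and give $g_\pm^{-1}$ with opposite signs. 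This matches the symmetric form of \cite[Lemma 3.30]{LWXZ25}.

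The main obstacle is the sign and inverse bookkeeping: identifying which intersections produce $g_\pm$ and which produce $g_\pm^{-1}$, with the correct orientations. We will work this out in a local model of the barbell. This is where the twisted bundle could in principle differ from $\Sigma\times S^2$. However, the whole computation takes place in $\nu(D_0\cup m_j\cup\gamma\cup S_0)$, where $S_0$ has trivial normal bundle and $\pi_1$-data match, so the argument of \cite{LWXZ25} should transfer verbatim.
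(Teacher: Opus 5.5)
Your reduction to $i=\S_0|_{H_2}$ is fine, whether you push the spheres of $i_\alpha$ off $\bar\nu(m_j\cup\gamma\cup S_0)$ or use that $\beta_\gamma$ acts trivially on $\pi_2(M_2)$ (its generator $S_0$ is a cuff sphere). The term $g_+-g_-$ from the copies of $S_0$ tubed in at $x_\pm$ is also right. The gap is the source of $-g_+^{-1}+g_-^{-1}$.

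You state that $\beta_\gamma(D_0)$ is homotopic rel boundary to $D_0$ with just those two copies of $S_0$ tubed on. Since $\lambda_0$ is a homotopy invariant, the change would then be exactly $g_+-g_-$, with no room for further terms. That description cannot be right. Since $\beta_\gamma$ fixes $\partial M_2$ and acts trivially on $\pi_1$, we have $\lambda(\beta_\gamma D_0,\beta_\gamma D_0')=\lambda(D_0,D_0')$ for the push-off $D_0'$. Since $\lambda$ vanishes on $\pi_2(M_2)$, the difference $x=\lambda_0([\beta_\gamma\circ\S_0|_{H_2}])-\lambda_0([\S_0|_{H_2}])$ must then satisfy $x+\bar x=0$, and $x=g_+-g_-$ fails this as soon as $g_+\neq g_-^{\pm1}$.

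The mechanisms you offer instead do not work:
\begin{itemize}
\item $\beta_\gamma$ does not alter the cuff $S_0$. It isotopes into $\partial\bar\nu(m_j\cup\gamma\cup S_0)$, where $\beta_\gamma$ is the identity, and after your reduction it is not part of the disk anyway.
\item The intersections of $D_0$ with its own push-off only give the normalisation $\lambda_0([\S_0|_{H_2}])=0$, which the barbell does not touch.
\end{itemize}

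The missing observation is that $D_0$ also meets the \emph{other} cuff: $S_0\cap D_0=\{\S_0(p)\}$ transversally, because $p\in H_2$. This is the same fact that gives $\lambda(S_0,\S_0|_{H_2})=1$. Near $\S_0(p)$ the disk $D_0$ is a normal disk of the cuff $S_0$. Applying Gabai's description \cite{G19} to that cuff, $\beta_\gamma$ also tubes $D_0$ at $\S_0(p)$ to a parallel copy $\tilde m_j$ of $m_j$ along $\bar\gamma$. Hence
\[ [\beta_\gamma\circ\S_0|_{H_2}]=[\S_0|_{H_2}]+g_+[S_0]-g_-[S_0]+\epsilon[\tilde m_j],\qquad \epsilon\in\{1,-1\}. \]

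The sphere $\tilde m_j$ meets $D_0$ only near $x_+$ and $x_-$, with opposite signs. The loops attached to these two points run along the tube over $\bar\gamma$, inside $m_j$ to $x_\pm$, and back in $D_0$; they are the reverses of the loops defining $g_\pm$. So $\lambda(\tilde m_j,\S_0|_{H_2})=g_+^{-1}-g_-^{-1}$. The constraint $x+\bar x=0$ then forces $\epsilon=-1$ (this can also be read off the local barbell model), giving exactly $g_+-g_--g_+^{-1}+g_-^{-1}$.

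The paper gives no argument of its own here, only the citation of \cite[Lemma 3.30]{LWXZ25}. This three-tube computation is what turns your outline into a proof.
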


The following lemma shows that, 
up to suitable actions of barbell diffeomorphisms, 
we can always assume that $[i] = [\S_d|_{H_2}] \in \pi_2(M_2,\partial D_0)$ for some $d$.

\begin{lem}\label{moveh1}
  Let $i : D^2 \rightarrow M_2$ be an embedding that equals to $\S_0$ near $\partial D^2$.
  Suppose the algebraic intersection number of $i(D^2)$ and $D_0$ are $d$.
  Then there exists a sequence of elements $\{[\gamma_k] \in \hat{\pi}_1(M_1;I_{i_k},S_0)\}_{1 \leq k \leq n}$, 
  such that
  $$ [\beta_{\gamma_n}^{\pm 1} \circ \cdots \circ \beta_{\gamma_1}^{\pm 1} \circ i] = [\S_d|_{H_2}] \in \pi_2(M_2;\partial D_0).$$
\end{lem}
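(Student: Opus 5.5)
We translate everything into $\ZZ[\pi]$ via the bijection $\lambda_0:\pi_2(M_2,\partial D_0)\to\ZZ[\pi]$ (inverse $\eta$). It suffices to show that $\lambda_0([i])-\lambda_0([\S_d|_{H_2}])$ lies in the subgroup $K\subset\ZZ[\pi]$ generated by the elements $g_+-g_--g_+^{-1}+g_-^{-1}$ that occur in Lemma \ref{classchange} as $\gamma$ varies. Lemma \ref{classchange} says that applying $\beta_\gamma$ (respectively $\beta_\gamma^{-1}$) adds (respectively subtracts) exactly such an element to $\lambda_0$. So once the difference is written as a signed sum of these generators, we apply the corresponding barbell diffeomorphisms one after another and reach $\lambda_0([\S_d|_{H_2}])$. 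Since $\lambda_0$ is a bijection, this gives $[\S_d|_{H_2}]$.

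\textbf{Step 1: constraints on $\alpha$.} Set $\alpha=\lambda_0([i])-\lambda_0([\S_d|_{H_2}])=\sum_{g}n_g g$. Both $i$ and $\S_d|_{H_2}$ are embeddings agreeing with $\S_0$ near the boundary, so their self-intersection data vanish. The standard relation between $\lambda(x,x)$ and the self-intersection invariant then shows that $\lambda_0$ of an embedded disk satisfies a symmetry constraint, as in \cite[Section 3]{LWXZ25}. I expect the difference to satisfy $\sigma(\alpha)=-\alpha$ up to terms at $1$, namely $n_{g^{-1}}=-n_g$. In addition, the augmentation of $\alpha$ is the difference of algebraic intersection numbers with $D_0$. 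This difference is $d-d=0$, using the hypothesis together with the computation for $\S_d$, which meets $D_0$ algebraically $d$ times through the resolved fiber intersections. The coefficient at $1$ must therefore vanish as well. Taken together, $\alpha$ lies in the span of the elements $g-g^{-1}$ for $g\ne1$.

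\textbf{Step 2: realizing $g-g^{-1}$ modulo $K$.} Pick $\gamma$ from the meridian $m_j$ to $S_0$. Changing how $\gamma$ winds before it reaches $S_0$ replaces $(g_+,g_-)$ by $(hg_+,hg_-)$-type pairs. The two intersection points $x_\pm$ of $m_j$ with $D_0$ differ by the loop $a_j$, the generator dual to the $j$-th $1$-handle, so $g_-=g_+a_j^{\pm1}$ or a similar relation holds. The generators of $K$ then take the form $g-g a_j-g^{-1}+(ga_j)^{-1}$. Telescoping these over a word for $h$ in the $a_j$ shows that $h-h^{-1}$ lies in $K$ for every $h$, modulo elements of the same shape with shorter words. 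An induction on word length in the free generators $a_1,\dots,a_{2g}$ of $\pi_1(\Sigma)$ then gives that $K$ contains all $g-g^{-1}$. The surface relation does not obstruct this, since we work in $\ZZ[\pi]$ directly.

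\textbf{Main obstacle.} The delicate point is the bookkeeping in Step 2, namely the exact relation between $g_+$ and $g_-$ and the dependence on the choice of $j$. The parity and sign claims of Step 1, which must be adapted to the twisted bundle, are also delicate. This is where $\S_d$ rather than $\S_0$ enters: the normal Euler number changes with $d$, and that change is absorbed into the coefficient at $1$ and the augmentation. I would first check Step 2 for $h=a_j^{\pm1}$ explicitly from the geometry of $m_j$, and then run the induction.
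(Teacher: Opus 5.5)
Your proposal is correct and follows essentially the same route as the paper. You translate to $\ZZ[\pi]$ via $\lambda_0$, use vanishing self-intersection of embedded disks to get $n_{g^{-1}}=-n_g$ for $g\neq 1$, use the augmentation ($d-d=0$) to kill the coefficient at $1$, and telescope $h-h^{-1}$ along a word for $h$ through adjacent pairs realized by barbells $\beta_\gamma$. The one step you leave vague is the link in Step 1 between self-intersection and $\lambda_0$: the paper supplies it by noting that $\mu\circ\eta$ is additive with $\mu\circ\eta(1)=0$ and $\mu\circ\eta(g)=[g]$ in $\ZZ[\pi\setminus\{1\}]/(g\sim g^{-1})$ (in your formulation, equivalently $\lambda(\eta(\alpha),\eta(\alpha))=\alpha+\sigma(\alpha)$ up to a multiple of $1$), and this is what turns $\mu([i])=\mu([\S_d|_{H_2}])=0$ into the antisymmetry you expect.
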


The proof of Lemma \ref{moveh1} is the same as \cite[Proposition 3.31]{LWXZ25}.
We give a sketch of proof here.
\begin{proof}[Sketch of the proof.]
  We have given an identification of $\pi_2(M_2;\partial D_0)$ to $\ZZ[\pi]$.
  By Lemma \ref{classchange}, 
  we need to find a sequence $\{[\gamma_k] \in \hat{\pi}_1(M_1;I_{i_k},S_0)\}_{1 \leq k \leq n}$,
  such that the associated pairs $\{(g_{k,+},g_{k,-})\}_{1\leq k \leq n}$ in $\pi$ satisfy
  $$ \sum_{k=1}^{n} \epsilon_k(g_{k,+} - g_{k,-} - g_{k,+}^{-1} + g_{k,-}^{-1}) = \lambda_0([i]) - \lambda_0([\S_d|_{H_2}]) \in \ZZ[\pi], $$
  for some $\epsilon_k \in \{1,-1\}$.
  Let  $\mu: \pi_2(M_2;\partial D_0) \rightarrow \ZZ[\pi \setminus \{1\}] / (g \sim g^{-1})$ be the equivariant self-intersection form.
  Since $i$ and $\S_d|_{H_2}$ are embeddings, 
  it holds that $\mu([i]) = \mu([\S_d|_{H_2}]) = 0$.
  By the construction of $\eta$ and $\mu$,
  we have that $\mu \circ \eta(1) = 0$, $\mu \circ \eta(g) = [g]$.
  Since we have
  $$ \mu \circ \eta( \lambda_0([i]) - \lambda_0([\S_d|_{H_2}])) = \mu([i]) - \mu([\S_d|_{H_2}]) = 0,$$
  it deduces that $\lambda_0([i]) - \lambda_0([\S_d|_{H_2}])$ is generated by elements of the form $h - h^{-1}$ for $h \in \pi \setminus \{1\}$ and $1 \in \pi$.
  
  There is an abelian group homomorphism $\varepsilon: \ZZ[\pi] \rightarrow \ZZ$, 
  mapping all the $g \in \pi$ to $1 \in \ZZ$.
  The composition $\varepsilon \circ \lambda$ is the algebraic intersection form.
  Since we have 
  $$ \varepsilon(\lambda_0([i]) - \lambda_0([\S_d|_{H_2}])) = d - d = 0, $$
  the element $\lambda_0([i]) - \lambda_0([\S_d|_{H_2}])$ can only be 
  generated by elements of the form $h - h^{-1}$ for $h \in \pi \setminus \{1\}$.
  
  For any $h \in \pi \setminus \{1\}$,
  we can find a sequence $1 = h_0, \cdots, h_m = h$ of elements in $\pi$,
  satisfying the following.
  \begin{itemize}
    \item[(1)]For any $1 \leq j \leq m$, the pair $(h_j,h_{j-1})$ equals to $(g_+,g_-)$ for some $\gamma$.
    \item[(2)]It holds that $$ h - h^{-1} = \sum_{j=1}^{m} \pm (h_j - h_{j-1} - h_j^{-1} + h_{j-1}^{-1}).$$
  \end{itemize}
  It concludes our proof.
\end{proof}

The following definition gives a subgroup of $\pi_1(F_1)$, 
gathering all different choices of the elements in $\pi_1(F_1)$, 
such that their image under the arc pushing satisfy the condition (\ref{isotopeh1}).

\begin{defn}\label{pqk}
  Pick $[i] \in F_2$.
  \begin{itemize}
    \item  There is a homomorphism
    $$ \mathcal{P} : \pi_1(F_1) \rightarrow \ZZ[\pi], \quad \alpha \mapsto \lambda_0[i] - \lambda_0[f_\alpha \circ i]. $$
    \item Let $K$ be the subgroup $\ker \mathcal{P}$ of $\pi_1(F_1)$.
    Recall that $\mathcal{R}$ is the homomorphism defined in (\ref{mcR}).
    Let $K_0$ be $K \cap \ker{\mathcal{R}}$.
    \item There is a homomorphism
    $$ \mathcal{Q} : K \rightarrow \ZZ[\pi \setminus \{1\}]^\sigma, \quad \alpha \mapsto \Dax_{M_2}(i,f_\alpha \circ i). $$ 
  \end{itemize}
\end{defn}

\begin{rem}
  It is proved in \cite[Corollary 3.32, Lemma 4.1]{LWXZ25} that
  the definitions of $\mathcal{P}$ and $\mathcal{Q}$ are independent on the choices of $i$ in $F_2$. 
  Hence $\mathcal{P}$ and $\mathcal{Q}$ are well-defined homomorphisms.
  We can let $i$ be $D_0$ for convenience in Definition \ref{pqk}.
\end{rem}

The following lemma gives a reduction when showing the well-definedness of the Dax invariants in Subsection \ref{ssec:bbcal}.

\begin{lem}\label{K0}
  For any $\alpha \in K$, there is an element $\alpha' \in K$ satisfying $\alpha - \alpha' \in K_0$ and $\mathcal{Q}(\alpha') = 0$.
\end{lem}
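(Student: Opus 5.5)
The plan is to correct $\alpha$ by a suitable combination of the fiber generators $\hat{\xi}_j$, which lie in $K$ but have nonzero $\mathcal{R}$. Recall the splitting of (\ref{ses1}): $\pi_1(F_1)$ is generated by the $\hat{\xi}_j$ and the $\hat{\rho}_\gamma$, with
\[
\mathcal{R}(\hat{\xi}_k) = (2\delta_{jk})_j, \qquad \mathcal{R}(\hat{\rho}_\gamma)=0 .
\]

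The first step is to check that each $\hat{\xi}_j$ lies in $K$, i.e.\ that $\mathcal{P}(\hat{\xi}_j)=0$. The diffeomorphism $f_{\hat{\xi}_j}$ changes the normal section along $I_j$ by a generator of $\pi_1(\Omega S^2)$. On $M_2$ it is supported near the boundary piece coming from $\nu(\S_d(e_j^1))$, and it acts on $D_0$ by a twist near $\partial D_0$. I would compute $\lambda_0[f_{\hat{\xi}_j}\circ D_0]-\lambda_0[D_0]$ using the identification $\eta$ of Subsection \ref{idofpi2}. The expected answer is that the twist adds a tube to a copy of the meridian sphere of $I_j$, and this sphere is null-homotopic in $M_2$ via a fiber disc. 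It should also not cross $\S$ nor the fibre $S_0$ algebraically, so $\lambda_0$ is unchanged. This is the analogue of \cite[Lemma 4.10]{LWXZ25}.

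Next I would compute $\mathcal{Q}(\hat{\xi}_j)$. Following the corresponding computation in \cite[Section 4]{LWXZ25}, the scanning loop $\langle D_0, f_{\hat{\xi}_j}\circ D_0\rangle$ should have Dax invariant $c_j\in\ZZ[\pi\setminus 1]^\sigma$. I expect $c_j = \pm(a_j+a_j^{-1})$, where $a_j\in\pi$ is the class of the $j$-th 1-cell, possibly plus terms supported at $1$ that vanish. The computation would scan the twisted collar and count the double points created as the arc sweeps around the meridian.

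Given $\alpha\in K$, write $\mathcal{R}(\alpha)=(n_j)_j$. Since every $\hat{\rho}_\gamma$ has $\mathcal{R}=0$ and $\mathcal{R}(\hat{\xi}_k)=2e_k$, all $n_j$ are even. The naive choice $\alpha''=\sum_j \frac{n_j}{2}\hat{\xi}_j$ has the same $\mathcal{R}$ as $\alpha$, but it need not satisfy $\mathcal{Q}(\alpha'')=0$. So instead I would set $\alpha' = \alpha - \kappa$ for some $\kappa\in K_0$ with $\mathcal{Q}(\kappa)=\mathcal{Q}(\alpha)$; then $\alpha-\alpha'=\kappa\in K_0$ and $\mathcal{Q}(\alpha')=0$, which is exactly the statement.

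This reduces the lemma to one claim: $\mathcal{Q}(K)\subseteq\mathcal{Q}(K_0)$. For that it suffices to show $\mathcal{Q}(\hat{\xi}_j)\in\mathcal{Q}(K_0)$, because $K$ is generated by $K_0$ together with the $\hat{\xi}_j$ (these lie in $K$ by the first step, and $\frac12\mathcal{R}$ is a splitting). To produce elements of $K_0$, I would take combinations of $\hat{\rho}_\gamma$ whose $\mathcal{P}$-contributions $g_+-g_--g_+^{-1}+g_-^{-1}$ from Lemma \ref{classchange} cancel, and compute their $\mathcal{Q}$ via the barbell/self-referential disc picture of \cite{G21}. The goal is to realise $a_j+a_j^{-1}$ as such a $\mathcal{Q}$-value. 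One candidate is the commutator-type loop spinning $I_j$ around $S_0$ along $\gamma$ and then along $\gamma$ precomposed with $a_j$.

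I expect the main obstacle to be this explicit Dax computation, both for $\mathcal{Q}(\hat{\xi}_j)$ and for the barbell combinations. Here the twisted bundle differs from $\Sigma\times S^2$: $S_0$ has trivial normal bundle, but $D_0$ now meets the fibers with a twist, which may shift the $g_\pm$ by extra terms. If a direct match fails, I would fall back on showing $\mathcal{Q}(\hat{\xi}_j)=0$ outright by exhibiting the twist as isotopic to the identity rel boundary on $D_0$, using the fixed point $(0,0,1)^T$ of $\phi$. In that case $\alpha'=\sum_j \frac{n_j}{2}\hat{\xi}_j$ works directly.
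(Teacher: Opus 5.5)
Your reformulation of the lemma as $\mathcal{Q}(K)\subseteq\mathcal{Q}(K_0)$ is correct. However, your first step, $\hat{\xi}_j\in K$, is false, and the rest of the plan depends on it.

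\textbf{Why $\hat{\xi}_j\notin K$.} Along $I_j$, $f_{\hat{\xi}_j}$ acts on the normal $3$-discs by a degree-$2$ loop of rotations fixing the section direction $\eta$. This rotation is tapered to the identity radially in the whole normal $D^3$. It fixes the strip $\S_0(H_1^j)$. In the taper region, however, the collar of $D_0$ on each side of the $1$-handle sweeps once over the normal sphere. So $f_{\hat{\xi}_j}(D_0)$ is $D_0$ with one copy of the meridian $m_j$ added at $x_+$ and another at $x_-$.

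The meridian is not null-homotopic in $M_2$; the $3$-ball it bounds in $M$ meets $\nu(\S_d(e^1_j))$. It meets $D_0$ exactly in $x_\pm$, with opposite signs. The two group elements differ by the standard generator $c_j\in\pi$, represented by a loop crossing the $j$-th $1$-handle once. Hence, whiskered at $x_+$, $m_j=\pm(1-c_j)$ in $\pi_2(M_2)\cong\ZZ[\pi]$, and the two bubbles give $\mathcal{P}(\hat{\xi}_j)=\pm(c_j-c_j^{-1})\neq 0$. A cross-check with Lemma \ref{classchange} agrees: the fibre-rotation loop described below lies in $K$, has $\mathcal{R}=2e_j$, and differs from $\hat{\xi}_j$ by $\pm\hat{\rho}_\gamma$ for a short $\gamma$. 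For that $\gamma$, $g_+=1$ and $g_-=c_j^{\pm1}$, so $\mathcal{P}(\hat{\rho}_\gamma)=\pm(c_j-c_j^{-1})$.

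Your observation that the meridian has zero algebraic intersection with $\S$ and $S_0$ only shows that the augmentation of $\mathcal{P}(\hat{\xi}_j)$ vanishes. This has three consequences:
\begin{itemize}
\item $\mathcal{Q}(\hat{\xi}_j)$ is not defined.
\item $K$ is not generated by $K_0$ and the $\hat{\xi}_j$.
\item The fallback $\alpha'=\sum_j\frac{n_j}{2}\hat{\xi}_j$ lies outside $K$ whenever $\mathcal{R}(\alpha)\neq 0$.
\end{itemize}
Separately, the values of $\mathcal{Q}$ and the barbell combinations that would realise them are only conjectured in your outline.

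\textbf{The missing idea.} Choose $\alpha'$ so that no Dax computation is needed. Write $\mathcal{R}(\alpha)=(2a_j)$ and realise it by a map ${\rm sk}^1\to SO(2)$. Viewed in $SO(3)$, its loops are null-homotopic, because the degrees are even and $\pi_1SO(3)=\ZZ/2$. Since $\pi_2SO(3)=0$, the resulting rotation over $H_0\cup H_1$ can be joined to the identity relative to $H_0$.

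The paper takes $\alpha'(t)=\tilde r_t\circ\S_d|_{H_1}$, where $\tilde r_t(x,v)=(x,\tilde h_t(x)v)$. At $t=1$ the rotation is $SO(2)$-valued and fixes $(0,0,1)^T$. So $f_{\alpha'}$ fixes $\S_0$, and hence $D_0$, pointwise. This gives $\mathcal{P}(\alpha')=0$, $\mathcal{Q}(\alpha')=\Dax_{M_2}(D_0,D_0)=0$ and $\mathcal{R}(\alpha')=\mathcal{R}(\alpha)$, so $\alpha-\alpha'\in K_0$.

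Your fallback of using the fixed point $(0,0,1)^T$ points in this direction. It must be applied to this fibre-rotation loop rather than to $\hat{\xi}_j$. The two agree to first order along the cores, which is why they have the same $\mathcal{R}$. They differ precisely by the radial taper that creates the meridian bubbles.
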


\begin{proof}
  Assume that $\mathcal{R}(\alpha) = (2a_1,\cdots,2a_{2g}) \in \ZZ^{2g}.$
  Denote the 1-skeleton of the fixed cellular structure of $\Sigma$ as ${\rm sk}^1$.
  Since $\ZZ^{2g} = H^1(\Sigma;\ZZ) = H^1({\rm sk}^1;\ZZ) = [{\rm sk}^1,SO(2)]$,
  we can pick $f:{\rm sk}^1 \rightarrow SO(2)$ corresponding to $\mathcal{R}(\alpha)$.
  Define an inclusion 
  $$i : SO(2) \rightarrow SO(3), \quad P \mapsto {\rm diag}\{P,1\}.$$
  Denote $h_1 = i \circ f : {\rm sk}^1 \rightarrow SO(3)$. 
  If we restrict $h_1$ on any $S^1$ wedge sum component of ${\rm sk}^1 = \vee_{2g} S^1$, we get a null-homotopic map $S^1 \rightarrow SO(3)$.
  Therefore, we can extend $h_1$ to $\tilde{h}_1 : H_0 \cup H_1 \rightarrow SO(3)$, mapping $H_0$ and $\partial H_1$ to ${\rm id} \in SO(3)$.
  Because $\pi_2(SO(3)) = 0$, there is a homotopy $\tilde{h}_t : H_0 \cup H_1 \rightarrow SO(3) (t \in [0,1])$ 
  between $\tilde{h}_1$ and the constant map to ${\rm id}$, relative to $H_0 \cup \partial H_1$.

  If we restrict the $S^2$-bundle $M \rightarrow \Sigma$ to $H_0 \cup H_1$, we get a trivial $S^2$-bundle.
  Define an ambient isotopy
  $$ r_t : (H_0 \cup H_1) \times S^2 \rightarrow (H_0 \cup H_1) \times S^2, \quad (x,v) \mapsto (x,\tilde{h}_t(x)v). $$
  By the definition of $\tilde{h}_t$, the isotopy $r_t$ fix $(H_0 \cup \partial H_1) \times S^2$ for all $t \in [0,1]$,
  thus we can extend $r_t$ by identity to $\tilde{r}_t: M \rightarrow M$.

  Let $\alpha'$ be the loop of embeddings of $H_1$ defined as 
  $$ \alpha'(t) : H^1 \hookrightarrow M, \quad x \mapsto \tilde{r}_t \circ \S_d(x). $$
  Then we can pick $\tilde{r}_1$ to be a representative of $f_{\alpha'}$.
  Since $\tilde{h}_1$ factors through $SO(2)$, $\tilde{r}_1$ fixes all points of form $(x,(0,0,1)^T)$ for $x \in H_1$,
  hence $f_{\alpha'} \circ \S_0 = \S_0$.
  We obtain that $\mathcal{R}(\alpha') = \mathcal{R}(\alpha)$, $\alpha' \in K$ and $\Dax_{M_2}(D_0,f_{\alpha'}(D_0))=0$, which concludes our proof.
\end{proof}

\subsection{Definition of the Dax invariants}

We give the concrete definition of the relative Dax invariants on $M$ in this subsection.

\begin{defn}
  Let $\mathcal{C}$ be the set consisting of all the conjugacy classes of $\pi$ except the conjugacy class $\{1\}$ of the unit. 
  There is a canonical projection map
  $$ \mathfrak{p} : \ZZ[\pi \setminus \{1\}] \rightarrow \ZZ[\mathcal{C}].$$  
\end{defn}

\begin{prop}\label{0cp}
  The following composition gives zero homomorphism
  $$ K \xrightarrow{\mathcal{Q}} \ZZ[\pi \setminus \{1\}]^\sigma \hookrightarrow \ZZ[\pi \setminus \{1\}] \xrightarrow{\mathfrak{p}} \ZZ[\mathcal{C}].$$
\end{prop}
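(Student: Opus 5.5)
By Lemma \ref{K0}, every $\alpha \in K$ can be written as $\alpha = (\alpha - \alpha') + \alpha'$ with $\alpha - \alpha' \in K_0$ and $\mathcal{Q}(\alpha') = 0$. Since $\mathcal{Q}$ is a homomorphism, it therefore suffices to prove that $\mathfrak{p} \circ \mathcal{Q}$ vanishes on $K_0 = K \cap \ker \mathcal{R}$. The splitting $\frac{1}{2}\mathcal{R}$ of (\ref{ses1}) shows that $\ker \mathcal{R}$ is generated by the elements $\hat{\rho}_\gamma$. Hence every $\alpha \in K_0$ is a finite signed sum $\sum_k \epsilon_k \hat{\rho}_{\gamma_k}$ satisfying $\mathcal{P}(\alpha) = 0$. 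Taking $i = D_0$ (allowed by the Remark after Definition \ref{pqk}), the map $f_\alpha$ is the composition of the barbell diffeomorphisms $\beta_{\gamma_k}^{\epsilon_k}$. By Lemma \ref{classchange}, the condition $\mathcal{P}(\alpha)=0$ becomes the identity
\[ \sum_k \epsilon_k \bigl(g_{k,+} - g_{k,-} - g_{k,+}^{-1} + g_{k,-}^{-1}\bigr) = 0 \in \ZZ[\pi]. \]

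The main step is to compute $\Dax_{M_2}(D_0, \beta_{\gamma_k}^{\pm1} \circ \cdots)$ explicitly, term by term, following the barbell computation of \cite[Section 4]{LWXZ25}. Applying $\beta_\gamma$ to a disk $D$ tubes $D$, along arcs through $x_\pm$, to parallel copies of $S_0$ and of the meridian sphere of $m_j$. A scanning argument then expresses $\Dax_{M_2}(D,\beta_\gamma \circ D)$ as a sum of group elements indexed by pairs of intersection points between $D$ and these spheres, namely expressions like $\lambda$-products of $\lambda_0([D])$ with $g_\pm$. In $\ZZ[\pi\setminus 1]$ the result depends on $[D]\in\pi_2(M_2,\partial D_0)$ through products such as $g_\pm \cdot \lambda_0([D]) \cdot g_\pm^{-1}$ and similar conjugates. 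Additivity of the relative Dax invariant lets me write $\mathcal{Q}(\alpha)$ as a sum of these step contributions, each evaluated on the intermediate disk $f_{\alpha_{<k}}(D_0)$.

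After projecting by $\mathfrak{p}$ to conjugacy classes, the conjugating elements disappear. Each step's contribution should then become a bilinear expression in the classes $[g_{k,\pm}^{\pm 1}]$ and in the running value $\lambda_0$ of the intermediate disk. I would then show that the total is a function of the cumulative change $\sum_k \epsilon_k(g_{k,+}-g_{k,-}-g_{k,+}^{-1}+g_{k,-}^{-1})$ alone, and that this function vanishes when the change is zero. This is the step I expect to be the main obstacle. Before projection, the invariant genuinely depends on the order of the steps, because the conjugations do not cancel; this is exactly why $\mathfrak{p}$ is needed. I will first check the claim on commutator-type words $\hat\rho_{\gamma}\hat\rho_{\gamma'}\hat\rho_\gamma^{-1}\hat\rho_{\gamma'}^{-1}$ and on pairs with $(g_+,g_-)$ swapped.

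If the direct computation becomes unwieldy, the fallback is to use naturality. Each $\beta_{\gamma}$ is the restriction of a diffeomorphism of $M$ isotopic to one supported near $S_0\cup\gamma\cup m_j$, and $\mathfrak{p}\circ\Dax_{M_2}$ should be insensitive to basepoint changes in the arc $I_0$, since moving the basepoint conjugates $\pi$. Granting this, I would reduce the vanishing to the case $\mathcal{Q}(\alpha) = \Dax_{M_2}(D_0,f_\alpha D_0)$ with $f_\alpha D_0$ homotopic to $D_0$ relative to the boundary. In that case the contribution is a telescoping sum of conjugates, which cancels in $\ZZ[\mathcal{C}]$.
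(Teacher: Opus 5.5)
Your reduction to $K_0$ via Lemma \ref{K0}, the observation that $\ker\mathcal{R}$ is spanned by the $\hat\rho_\gamma$, and the translation of $\mathcal{P}(\alpha)=0$ into the relation (\ref{Kcondition}) all agree with the paper. There is a genuine gap after that point, and it covers the whole content of Proposition \ref{red0cp}.

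\textbf{Step contributions are not defined.} Your main step writes $\mathcal{Q}(\alpha)$ as a sum of contributions $\Dax_{M_2}(D_{k-1},D_k)$, where $D_k=\beta_{\gamma_k}^{\epsilon_k}\circ\cdots\circ\beta_{\gamma_1}^{\epsilon_1}(D_0)$. By Lemma \ref{classchange}, each step shifts $\lambda_0$ by $\pm(g_+-g_--g_+^{-1}+g_-^{-1})$, whatever the disk. This shift is nonzero in general, so consecutive disks are not homotopic rel boundary. The scanning loop $\langle D_{k-1},D_k\rangle$ is then not in $\ker\F$, and additivity of $\Dax_{M_2}$ is only available on $\pi_1^D$. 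To assign individual contributions you would have to extend $\Dax_{M_2}$ to all of $\pi_1({\rm Emb}_\partial(I,M_2);I_0)$ by a non-canonical splitting over $\pi_2(M_2;I_0)$. Nothing in your sketch controls how such a splitting interacts with barbell diffeomorphisms.

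\textbf{The key claim is the proposition itself.} You assert that after $\mathfrak{p}$ the total depends only on the cumulative change in $\lambda_0$ and vanishes when that change is zero. This is not an intermediate step; it restates the proposition, and you leave it as the expected obstacle.

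\textbf{The fallback gains nothing.} For $\alpha\in K$ the disk $f_\alpha(D_0)$ is already homotopic to $D_0$ rel boundary, since that is what $\mathcal{P}(\alpha)=0$ means when $\lambda_0$ is a bijection. The ``telescoping sum of conjugates'' is asserted without any mechanism that produces it.

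\textbf{What the paper does instead.} Following the barbell calculus of \cite{LWXZ25}, it never computes individual steps. It uses two ingredients.
\begin{itemize}
\item Proposition \ref{homotopepath}: replacing the path $\gamma$ of a vertical-meridian barbell by one homotopic rel endpoints preserves the class in $\pi_2(M_2,\partial D_0)$ and changes $\Dax$ only by something in $\ker\mathfrak{p}$. Such a homotopy may pass the interior of $\gamma$ through $p$ or through the core of the $1$-handle, so it is not a deformation of defining data. Since $\mathcal{Q}$ is a homomorphism independent of the base disk, this reduces the question to the sequence of adjacent pairs $(g_{k,+},g_{k,-})$.
\item Proposition \ref{existenceofbarbell}: every sequence of adjacent pairs satisfying (\ref{Kcondition}) is realized by some sequence of vertical-meridian barbells with $\mathfrak{p}\circ\Dax=0$. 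This is proved by an admissibility analysis of such sequences carried out in the universal cover $\tilde\Sigma$.
\end{itemize}
You need some substitute for both: a usable description of the relations among the vectors $g_+-g_--g_+^{-1}+g_-^{-1}$ cut out by (\ref{Kcondition}), and a verification that $\mathfrak{p}\circ\mathcal{Q}$ vanishes on each of them.
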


We leave the proof of Proposition \ref{0cp} to the next subsection.
Now we use the notation in the fibration tower (\ref{fbtw}) to give the definition of the Dax invariants for embedded surfaces.

Given $[i_1],[i_2] \in \pi_0(E_2)$, 
the homotopy of $i_1 |_{H_0 \cup H_1}$ and $i_2 |_{H_0 \cup H_1}$ in the 4-manifold $M$ can be modified to an ambient isotopy.
Thus we may assume that $i_1 |_{H_0 \cup H_1} = i_2 |_{H_0 \cup H_1} = \S_d|_{H_0 \cup H_1}$.
By Proposition \ref{moveh1}, there are $\alpha_1, \alpha_2 \in \pi_1(F_1)$, satisfying $[f_{\alpha_1} \circ i_1] = [f_{\alpha_2} \circ i_2] \in \pi_2(M_2,\partial D_0)$. 
Now the following Dax invariants in $M_2$
$$ \Dax_{M_2}(f_{\alpha_1} \circ i_1, f_{\alpha_2} \circ i_2) \in \ZZ[\pi \setminus \{1\}]^{\sigma} $$
is well-defined.

\begin{defn}\label{defofdax}
For a pair of pointed embedded surfaces $i_1,i_2: \Sigma \hookrightarrow M$, the relative Dax invariant is defined by
$$ \Dax(i_1,i_2) = \mathfrak{p} \circ \Dax_{M_2}(f_{\alpha_1} \circ i_1, f_{\alpha_2} \circ i_2) \in \ZZ[\mathcal{C}].$$
\end{defn}

\subsection{Barbell calculus and the proof of Proposition \ref{0cp}}\label{ssec:bbcal}

Due to Lemma \ref{K0}, we know that $\mathcal{Q}(K) = \mathcal{Q}(K_0) \subset \ZZ[\pi\ \{1\}]^{\sigma}$. 
Hence we can reduce Proposition \ref{0cp} to the following simpler one.

\begin{prop}\label{red0cp}
  The following composition gives zero homomorphism
  $$ K_0 \xrightarrow{\mathcal{Q}} \ZZ[\pi \setminus \{1\}]^\sigma \hookrightarrow \ZZ[\pi \setminus \{1\}] \xrightarrow{\mathfrak{p}} \ZZ[\mathcal{C}].$$
\end{prop}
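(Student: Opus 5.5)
We follow the barbell calculus of \cite[Section 4]{LWXZ25}: we show that $\mathfrak{p}\circ\mathcal{Q}$ vanishes on a generating set of $K_0$, which is enough because $\mathcal{Q}$ is a homomorphism (Definition \ref{pqk}). By the splitting $\frac{1}{2}\mathcal{R}$ of (\ref{ses1}) and the computation $\mathcal{R}(\hat{\xi}_k)=2e_k$, $\mathcal{R}(\hat\rho_\gamma)=0$, the group $\ker\mathcal{R}$ is generated by the classes $\hat{\rho}_\gamma$. Hence every $\alpha\in K_0$ can be written as $\alpha=\sum_k \epsilon_k\hat\rho_{\gamma_k}$ with $\sum_k\epsilon_k(g_{k,+}-g_{k,-}-g_{k,+}^{-1}+g_{k,-}^{-1})=0$ in $\ZZ[\pi]$, by Lemma \ref{classchange}. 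The diffeomorphism $f_\alpha$ is then a composition of the barbell diffeomorphisms $\beta_{\gamma_k}^{\pm1}$ acting on $D_0$. Using the remark after Definition \ref{pqk}, we compute $\mathcal{Q}(\alpha)=\Dax_{M_2}(D_0,f_\alpha\circ D_0)$.

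The first step is to compute the Dax invariant of a single barbell move and of compositions, without imposing the kernel condition. Each $\beta_{\gamma}(D_0)$ is obtained from $D_0$ by tubing two parallel copies of $S_0$, with opposite orientations, along arcs representing $g_+$ and $g_-$; this follows from the description of $\beta(D_0)$ in \cite[Construction 5.3]{G19}, applied to the two intersection points $x_\pm$ of $m_j$ with $D_0$. For a product of such moves, we would derive a formula for $\Dax_{M_2}$ in the style of \cite[Lemma 4.5--4.8]{LWXZ25}. This formula should express $\Dax_{M_2}(D_0,f_\alpha\circ D_0)$ as a sum of three kinds of terms. The first kind is quadratic: products $\epsilon_k\epsilon_l\,g_{k,\pm}^{\pm1}g_{l,\pm}^{\pm1}$ coming from intersections between the spheres tubed in at different stages. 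The second kind consists of terms linear in the $g_{k,\pm}$, coming from self-intersections created while tubing. The third kind consists of correction terms $\lambda(S_0,S_0)$-type contributions. These last vanish here because the equivariant intersection form of $M_2$ is trivial (Subsection \ref{subsec:daxofm2}), and because $S_0$ has trivial normal bundle in $M_2$.

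The second step is to pass to conjugacy classes. After applying $\mathfrak{p}$, a product $g h$ and the product $h g$ give the same class, so the quadratic terms reorganize. They become expressions in $\mathfrak{p}$ of products of the total class change $\sum_k\epsilon_k(g_{k,+}-g_{k,-}-g_{k,+}^{-1}+g_{k,-}^{-1})$ with itself or with fixed elements. Each of these vanishes by the hypothesis $\alpha\in K$. The linear terms should similarly combine into $\mathfrak{p}$ of the $\mathcal{P}$-image, so they also vanish. This is exactly where conjugation invariance is used, and why one only obtains a well-defined invariant in $\ZZ[\mathcal{C}]$ rather than in $\ZZ[\pi\setminus\{1\}]$.

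The main obstacle is the first step: obtaining the precise Dax formula for compositions of barbells in $M_2$. The cuffs $m_j$ are meridians of the $1$-handle arcs, and in $\Sigma\ltimes S^2$ one must check that the twisting by $\phi$ does not introduce extra terms. This is the point where $M$ differs from $\Sigma\times S^2$. To handle it, we would first use the fact that all barbells can be supported away from the disk $D\subset H_2$ over which the bundle is twisted. We would take $S_0$ to be the fiber over $p\notin D$ and choose the arcs $\gamma$ away from $D\times S^2$. All the computations then take place in a region diffeomorphic to the corresponding region of $\Sigma\times S^2$, so the formulas of \cite{LWXZ25} carry over verbatim.
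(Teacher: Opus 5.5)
Your reduction matches the paper: every $\alpha\in K_0$ is $\sum_k\epsilon_k\hat\rho_{\gamma_k}$ with vanishing total class change, since $\ker\mathcal{R}$ is generated by the $\hat\rho_\gamma$. Moreover $\mathcal{Q}(\alpha)=\Dax_{M_2}(D_0,f_\alpha\circ D_0)$ for a composite of barbells. The gap is everything after this. The composition formula and the cancellation are only asserted (``should express'', ``should similarly combine''), and together they are the entire content of the proposition.

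The input you would feed into such a formula is also wrong. $D_0$ meets \emph{both} cuffs of $\beta_\gamma=\beta_{(m_j,S_0,\gamma)}$: it meets $m_j$ at $x_\pm$, and it meets $S_0$ at $\S_0(p)$. So besides the two tubes to $\pm S_0$, which contribute $g_+-g_-$, the disk $\beta_\gamma(D_0)$ also acquires a tube to a copy of $m_j$ along $\bar\gamma$. That tube is the source of $-g_+^{-1}+g_-^{-1}$ in Lemma \ref{classchange}, so your description of a single move contradicts that lemma. In addition, a single move changes the class in $\pi_2(M_2,\partial D_0)$. Hence ``per-move'' Dax terms only make sense relative to reference disks in each class, and the tubes of those disks run through the supports of later barbells. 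Controlling that interaction is exactly what a composition formula would require, and you do not address it.

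Second, the cancellation mechanism fails as stated. Write the cross terms of your model as $\epsilon_k\epsilon_l B(x_k,x_l)$, with $x_k$ the class change of the $k$-th move. Conjugation invariance only lets you symmetrize: $2\sum_{k<l}\epsilon_k\epsilon_l\,\mathfrak{p}B(x_k,x_l)=\mathfrak{p}B\bigl(\sum_k\epsilon_kx_k,\sum_k\epsilon_kx_k\bigr)-\sum_k\mathfrak{p}B(x_k,x_k)$. The diagonal terms do not depend on the signs $\epsilon_k$. So they cannot be absorbed into the $\mathcal{P}$-image, which is linear in the $\epsilon_kx_k$. To conclude, you would need the per-move contributions to assemble into a function of the total class change alone, which is essentially the statement being proved. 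There is also no reason for $\Dax_{M_2}$ itself to be a function of the $g_{k,\pm}$: the paper controls the effect of homotoping $\gamma$ only after applying $\mathfrak{p}$.

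The paper's argument is different in kind.
\begin{enumerate}
\item Proposition \ref{homotopepath} shows that homotoping the path of a vertical-meridian barbell changes neither the class in $\pi_2(M_2,\partial D_0)$ nor $\mathfrak{p}\circ\Dax$. Thus the value for a composite depends only on its sequence of adjacent pairs.
\item Proposition \ref{existenceofbarbell} shows that every sequence of adjacent pairs satisfying (\ref{Kcondition}) is realized by some sequence of vertical-meridian barbells whose composite has vanishing $\mathfrak{p}\circ\Dax$. This uses the admissibility argument for adjacent pairs, carried out in $\tilde\Sigma$.
\end{enumerate}
The kernel condition enters through this geometric and combinatorial realization, not through an algebraic identity. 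That is the idea your proposal is missing.
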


The proof of Proposition \ref{red0cp} is an application of the "barbell calculus" technique in \cite[Subsection 4.3]{LWXZ25}, 
and a slight modification of the proof of \cite[Proposition 4.3]{LWXZ25}.
Here we give a brief sketch of the proof, and emphasize the reasons why the original proof still applies.


For elements in $\pi_1({\rm Emb}^{\rm Fr}_\partial (\sqcup_{2g} I,M_1))$ of the form $\tilde{\rho}_\gamma$, 
there results after arc-pushing are special barbell diffeomorphisms called the vertical-meridian barbell diffeomorphisms.

The defining data for a vertical-meridian barbell diffeomorphism consists of
\begin{itemize}
  \item a $1$-handle $h$ of $\Sigma$, the center $q$ of $h$ and an orientation $\mathfrak{o}$ for the meridian $m$ of $\S_0(h)$ at $\S_0(q)$;
  \item the point $p$ in the $2$-handle $H_2$ of $\Sigma$, whose $S^2$-fiber under the projection $M \rightarrow \Sigma$ is $S_0$;
  \item a smooth path $\gamma$ in $\Sigma$ from $p$ to $q$, required that ${\rm int}(\gamma)$ does not intersect $p$ and the core of $h$.
\end{itemize}
We may lift $\gamma$ to an embedded path in $M$, and perturb it at endpoints to a path $\hat{\gamma}$ connecting $m$ and $S_0$.
Then the barbell diffeomorphism given by $(S_0,m,\hat{\gamma})$ is a vertical-meridian barbell diffeomorphism, 
denoted as $\beta_{p,q,\gamma,\mathfrak{o}}$.
We remark that the orientation of $S_0$ is already chosen a priori to be the orientation of the $S^2$-fibers.

Consider the normal bundle $\nu$ for $c \rightarrow \S_0(h)$, where $c$ is the core of $h$.
For convenience, we fix a metric on $\Sigma$, inducing metrics on $\S_0(\Sigma)$ and $\nu$.
Then the orientation $\mathfrak{o}$ of $m$ corresponds to a choice of a unit section $s$ of $\nu$.
We push $c$ along $\S_0(\Sigma)$ slightly in the direction $s(c)$ (respectively, $-s(c)$), 
to get a path $\gamma_+$ (respectively, $\gamma_-$), satisfying that $\gamma_+(1)$ and $\gamma_-(1)$ are in $\S_0(H_2)$.
The concatenation of the paths $\gamma$ and $\gamma_+$ (respectively, $\gamma_-$) 
represents an element $g_+$ (respectively, $g_-$) of the $\pi_1(M,\S_0(H_2))$,
which is canonically isomorphic to $\pi$.
For a pair $(g,g')$ of elements in $\pi$, 
we say the pair is adjacent, if $g',g$ differ by the right product of a standard generator of $\pi$.
Note that the pair $(g_+,g_-)$ is adjacent.
We call $(g_+,g_-)$ the adjacent pair for $\beta_{p,q,\gamma,\mathfrak{o}}$.
According to Lemma \ref{classchange}, it holds that
$$\lambda_0([\beta_{p,q,\gamma,\mathfrak{o}} \circ D_0]) - \lambda_0([D_0]) = g_+ - g_- - g_+^{-1} + g_-^{-1}. $$



The following proposition shows that, 
the image of the relative Dax invariant under the projection $\mathfrak{p}$ is unchanged,
when we homotope the defining data $\gamma$ of a vertical-meridian barbell diffeomorphism.

\begin{prop}\label{homotopepath}
  Let $(p,q,\gamma,\mathfrak{o}), (p,q,\gamma',\mathfrak{o})$ be defining data for the vertical-meridian barbell diffeomorphisms $\beta$ and $\beta'$, respectively. 
  Suppose $\gamma, \gamma'$ are homotopic relative to endpoints.
  Then it holds the following.
  \begin{itemize}
    \item[(1)] $[\beta(D_0)] = [\beta'(D_0)] \in \pi_2(M_2,\partial D_0)$;
    \item[(2)] $\mathfrak{p} \circ \Dax (\beta(D_0), \beta'(D_0)) = 0$.
  \end{itemize}
\end{prop}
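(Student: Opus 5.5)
Part (1) is immediate from Lemma \ref{classchange}. That lemma says the class change $\lambda_0([\beta(D_0)])-\lambda_0([D_0])$ equals $g_+-g_--g_+^{-1}+g_-^{-1}$. Here $g_\pm$ are represented by the concatenations of the lifted path $\hat\gamma$ with $\gamma_\pm$. Since $\gamma\simeq\gamma'$ rel endpoints in $\Sigma$, and $\pi_1(M)\cong\pi_1(\Sigma)$ via the projection, the lifts give the same elements $g_\pm\in\pi$. Hence $\lambda_0([\beta(D_0)])=\lambda_0([\beta'(D_0)])$. Because $\lambda_0$ is a bijection with inverse $\eta$, (1) follows. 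In particular $\Dax_{M_2}(\beta(D_0),\beta'(D_0))$ is defined.

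For (2), I would follow \cite[Subsection 4.3]{LWXZ25} and analyse what happens as $\gamma$ is moved through a generic homotopy to $\gamma'$. Generic homotopies of the path in $\Sigma$, keeping ${\rm int}(\gamma)$ away from $p$ and from the core of $h$ except at isolated times, decompose into moves of three kinds:
\begin{itemize}
\item[(a)] isotopies of the path in the complement, which change the barbell by an isotopy of its defining data in $M_2$;
\item[(b)] crossings of the path through the point $p$;
\item[(c)] crossings of the path through the core $c$ of $h$ (equivalently through $\S_0(h)$ near the meridian).
\end{itemize}
Self-crossings of the lifted path are irrelevant in dimension four, since the lift can be perturbed to be embedded. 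Moves of type (a) give isotopic diffeomorphisms by \cite[Remark 2.5]{LWXZ25}, so they contribute zero Dax invariant. It therefore suffices to compute the change of $\Dax_{M_2}$ across a single crossing of type (b) or (c), and to show that this change lies in $\ker\mathfrak p$. Typically it is of the form $g-hgh^{-1}$, or a sum of terms that cancel in pairs after conjugation.

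For each crossing I would use the description from \cite[Construction 5.3]{G19}. The image $\beta(D_0)$ is $D_0$ tubed to a parallel copy of the meridian-sphere data along the bar. The two barbells before and after the crossing differ in that the bar is pushed across $S_0$ or across $D_0$. The disks $\beta(D_0)$ and $\beta'(D_0)$ then differ by a finger move through the crossed surface, followed by a Whitney move along a different arc. Using Gabai's description of the Dax invariant via double points of the generic homotopy, I would count the transient double points. Each one has a group element of the form $g_\pm\cdot x\cdot g_\pm^{-1}$, where $x$ is the loop around $p$ (for (b)) or the standard generator dual to $h$ (for (c)). Paired terms then differ only by conjugation, or by the action of a loop that is trivial in $\pi$. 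Taking $\mathfrak p$ kills the conjugation ambiguity, so these contributions vanish in $\ZZ[\mathcal C]$.

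The main obstacle is the type (c) crossing, together with the twisting in $M$. When the path crosses $\S_0(h)$, the parallel copies of $m$ and the sheets of $D_0$ are permuted. In $\Sigma\ltimes S^2$ the normal framing of $S_0$ relative to $\S_0$ could also introduce an extra term; this is where the twisted construction (\ref{model}) might differ from $\Sigma\times S^2$. I would first check that all of this happens inside $(H_0\cup H_1)\times S^2$ or in a neighbourhood of $p$ away from $D$, where the bundle is trivial. If so, the local computation of \cite[Proposition 4.5]{LWXZ25} transfers verbatim, and the only remaining point is to verify that the resulting terms are conjugates with opposite signs.
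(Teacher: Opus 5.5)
Your proposal follows the paper's route: (1) follows from Lemma~\ref{classchange} exactly as you argue (the adjacent pair $(g_+,g_-)$ depends only on the class of $\gamma$ rel endpoints, and $\lambda_0$ is a bijection), and for (2) the paper also just tracks a generic homotopy from $\gamma$ to $\gamma'$ and imports the local crossing computation of \cite{LWXZ25}, justified by the fact that it only involves the common base surface $\Sigma$. One correction to your bookkeeping: the isolated events, as the paper says, are the trajectory of ${\rm int}(\gamma)$ passing through the endpoints $p$ and $q$ (in $M$, the bar passing through the cuffs $S_0$ and $m$), whereas crossing the arc $c$ away from $q$ is not an isolated event for paths in a surface and pushing the bar across $D_0$ is only an isotopy of the barbell in $M_2$, so your type (c) should be replaced by the passage through $q$.
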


The proof of Proposition \ref{homotopepath} based on a careful computation 
when the trajectories of ${\rm int}(\gamma)$ in $M$ go through the endpoints.
Since the base surfaces of the $S^2$-bundles $M$ and $\Sigma \times S^2$ are the same, 
such computation in \cite{LWXZ25} is applicable in our case.
Now we can reduce Proposition \ref{red0cp} to the following proposition.
\begin{prop}\label{existenceofbarbell}
  Let $\{(g_{j,+},g_{j,-})\}_{1 \leq j \leq m}$ be a sequence of adjacent pairs in $\pi$.
  Suppose the sequence $\{(g_{j,+},g_{j,-})\}_{1 \leq j \leq m}$ satisfies that
  \begin{equation}\label{Kcondition}
    \sum_{j=1}^{m} (g_{j,+} - g_{j,-} - g_{j,+}^{-1} + g_{j,-}^{-1}) = 0 \in \ZZ[\pi].
  \end{equation}
  Then there is a sequence $\{ \beta_j \}_{1 \leq j \leq m}$ of vertical-meridian barbell diffeomorphisms, such that
  \begin{equation*}
    \mathfrak{p} \circ \Dax (D_0, \sum_{j=1}^{m} \beta_j(D_0)) = 0.
  \end{equation*}
  Here the sum refers to the composition in $\MCG(M_2)$.
\end{prop}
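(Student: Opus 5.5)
We will follow the combinatorial strategy of \cite[Proposition 4.3]{LWXZ25}, arguing on the Cayley graph of $\pi=\pi_1(\Sigma)$ with respect to the standard generators $a_1,b_1,\dots,a_g,b_g$. An adjacent pair $(g_+,g_-)$ is an oriented edge of this graph. The term $g_+-g_--g_+^{-1}+g_-^{-1}$ is the boundary of that edge minus the boundary of its image under inversion. Condition (\ref{Kcondition}) therefore says that the $1$-chain
$$c=\sum_j \big([g_{j,-}\to g_{j,+}]-[g_{j,-}^{-1}\to g_{j,+}^{-1}]\big)$$
is a cycle. Here the inverted pair $(g_+^{-1},g_-^{-1})$ is again adjacent, but as a \emph{left} translate of a generator. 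The plan is to choose the barbells $\beta_j$ realizing the pairs so that their composite can be rearranged, modulo moves that do not change $\mathfrak{p}\circ\Dax$, into a composition of barbells whose Dax contribution we can compute and which cancel.

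The first step is to realize each adjacent pair by vertical-meridian data. Given an edge $g_-\to g_+=g_-x$ with $x$ a standard generator, take $h$ to be the $1$-handle dual to $x$ and $\gamma$ a path from $p$ to $q$ in the class determined by $g_-$. By Proposition \ref{homotopepath}, the resulting class $\mathfrak{p}\circ\Dax$ depends only on the homotopy class of $\gamma$. Hence the barbell $\beta_{(g_+,g_-)}$ is well defined up to terms that vanish after $\mathfrak{p}$, and we may work with edges rather than with paths.

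The second step is to reduce to elementary relations. Since $\sum_j(\cdots)=0$ in $\ZZ[\pi]$, the formal sum of edges decomposes, up to cancellation of an edge against its reverse, into closed loops in the Cayley graph. Each such loop is generated, as a $1$-cycle, by translates of the $2$-cell boundary $\prod [a_i,b_i]$, together with backtracking terms. We will verify three things:
\begin{itemize}
\item[(a)] the barbell of an edge composed with that of the reversed orientation $\mathfrak{o}$ is isotopic to the identity;
\item[(b)] a backtracked pair of edges contributes zero after $\mathfrak{p}$;
\item[(c)] for the $4g$ edges of a single relator loop based at some $g\in\pi$, the composite barbell is realized by pushing the meridian around the whole $2$-handle, i.e.\ around $\partial H_2$.
\end{itemize}
In (c) the path $\gamma$ then sweeps across $p$, and the resulting disk differs from $D_0$ by a tube to $S_0$ along loops conjugate to one another. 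Its Dax invariant is a sum of terms $h-h'$ with $h,h'$ conjugate, and this vanishes under $\mathfrak{p}$. Commuting barbells past each other changes $\Dax_{M_2}$ only by commutator corrections, which are again differences of conjugate elements and so are killed by $\mathfrak{p}$. Together these give additivity of $\mathfrak{p}\circ\Dax$ over compositions, and the result follows.

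The main obstacle is step (c), together with the fact that $M$ is now a twisted bundle. Sweeping $\gamma$ across the $2$-handle passes through the disk $D$ where the clutching map $\phi$ acts. We must check that the nontrivial element of $\pi_1(SO(3))$ does not add an extra fiber-sphere tube whose contribution survives $\mathfrak{p}$. We would try first to use the fact that the fixed point $(0,0,1)^T$ of $\phi$ makes $\S_0$ and the sections used for $\hat\gamma$ untwisted. Any residual twist only changes framings, which is a $\hat\xi_j$-type correction lying outside $K_0$ and already accounted for by Lemma \ref{K0}. The local computation can then be copied from \cite{LWXZ25}.
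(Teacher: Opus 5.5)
Your proposal has a genuine gap in the second step. Condition (\ref{Kcondition}) does \emph{not} make the right-edge chain $c_R=\sum_j[g_{j,-}\to g_{j,+}]$ a cycle.

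Write $\partial_R c_R=\sum_j(g_{j,+}-g_{j,-})$. Then (\ref{Kcondition}) reads $(1-\sigma)\partial_R c_R=0$, i.e.\ $\partial_R c_R\in\ZZ[\pi]^\sigma$. Since $\pi$ is torsion-free, $\partial_R c_R$ can be any integer combination of the elements $h+h^{-1}-2$ with $h\neq 1$.

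A concrete example: let $P$ be a right path from $1$ to $h$ and $\bar P$ its reverse. Then $P$ followed by the left translate $h^{-1}\bar P$, which is a right path from $1$ to $h^{-1}$, satisfies (\ref{Kcondition}). The two halves contribute $h-h^{-1}$ and $h^{-1}-h$, yet $\partial_R c_R=h+h^{-1}-2\neq 0$.

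So the relations you must handle form $\partial_R^{-1}(\ZZ[\pi]^\sigma)$. This is strictly larger than the cycle space $\ker\partial_R$: the quotient is free abelian with one basis element for each unordered pair $\{h,h^{-1}\}$, $h\neq1$.

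Your chain $c$ is a cycle only in the union of the right and left Cayley graphs. There its cycles are not generated by translates of $\prod[a_i,b_i]$ and backtracks. Moreover, the left edges are not barbells; they are only the $\sigma$-images of right edges. So a loop decomposition of $c$ does not translate into a rearrangement of the $\beta_j$. Consequently steps (a)--(c), even granted, only dispose of $\ker\partial_R$.

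The missing case is the substantive one. It encodes the fact that $h-h^{-1}$ can be realized both by a path from $1$ to $h$ and by the reverse of a path from $1$ to $h^{-1}$. You need a separate geometric argument showing that, for a suitable choice of barbells, the composite attached to $P$ and $h^{-1}\bar P$ has $\Dax_{M_2}$ equal to a sum of differences of conjugate elements. (The second half is $\bar P$ with each path $\gamma$ changed by a loop at $p$ representing $h^{-1}$.) This is exactly what the admissibility argument of \cite{LWXZ25}, carried out on $\tilde\Sigma$ and invoked by the paper, must supply.

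Some secondary points:
\begin{itemize}
\item The obstacle you single out is not the real one. The paper's point is that this step only involves combinatorics and local moves over $\tilde\Sigma$, which do not see the clutching map.
\item Lemma \ref{K0} is beside the point here, since vertical-meridian barbells already lie in $\ker\mathcal{R}$.
\item Your claim that reordering barbells costs only differences of conjugates is unsupported.
\item In (c), the vertices of this graph correspond to lifts of $H_2$. A relator loop therefore encircles a lift of the $0$-handle, so your description of (c) as a sweep of $\gamma$ across $p$ around $\partial H_2$ would have to be redone.
\end{itemize}
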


Proof of Proposition \ref{existenceofbarbell} is based a clarification on "the admissibility of sequences of adjacent pairs".
This argument only involves computation on the universal covering $\tilde{\Sigma}$ of the surface $\Sigma$, 
so it still can be applied to the case we consider.

\subsection{Properties of the Dax invariants}

In this subsection we show some properties of the relative Dax invariants for the embeddings of surfaces in $M$.

\subsubsection{Independency}
The following results can be shown by the same arguments as the proofs for \cite[Lemma 4.5, Lemma 4.6, Lemma 3.1]{LWXZ25}. 

\begin{lem}\label{hddecompositon}
  The Dax invariants are independent on the choices of handle decompositions satisfying that the 0-handle containing the basepoint.
\end{lem}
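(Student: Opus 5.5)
The plan is to show that any two admissible handle decompositions can be joined by a finite sequence of elementary moves, and that the invariant $\Dax(i_1,i_2)$ of Definition \ref{defofdax} is unchanged by each move. Fix two handle decompositions $\mathcal{H},\mathcal{H}'$ of $\Sigma$ with one $0$-handle, $2g$ $1$-handles and one $2$-handle, the $0$-handle containing $b_0$. Up to isotopy of $\Sigma$ fixing $b_0$ (which we may extend to a pointed diffeomorphism of $\Sigma$), they are related by three kinds of moves:
\begin{itemize}
\item an isotopy of the decomposition;
\item handle slides of $1$-handles over $1$-handles;
\item a pointed diffeomorphism $\varphi$ of $\Sigma$ carrying one decomposition to the other, which acts on the $1$-skeleton by an automorphism of the free group $\pi_1(\mathrm{sk}^1)$ compatible with $\pi_1(\Sigma)$.
\end{itemize}
Since the data $M_1, M_2, D_0, I_0$ depend only on the decomposition, it suffices to compare the two definitions move by move.

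\textbf{Isotopies.} The whole construction (the tower (\ref{fbtw}), the identification of $\pi_2(M_2,\partial D_0)$, and the Dax isomorphism of $M_2$) is transported by an ambient diffeotopy of $M$ isotopic to the identity. Such a diffeotopy acts trivially on $\pi$ up to conjugation, so $\mathfrak{p}\circ\Dax_{M_2}$ is preserved. Here we use that $\mathfrak{p}$ kills conjugation, which is exactly why the target is $\ZZ[\mathcal{C}]$.

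\textbf{Handle slides.} If $\mathcal{H}'$ is obtained from $\mathcal{H}$ by sliding $H_1^j$ over $H_1^k$, the new $2$-handle $H_2'$ differs from $H_2$ by a band along the cocore region near $H_1^k$. Given $i_1,i_2$ normalized to agree with $\S_d$ on $H_0\cup H_1$ (and, after the $\pi_1(F_1)$-adjustment of Lemma \ref{moveh1}, homotopic rel boundary on $H_2$), they also agree on $H_0\cup H_1'$. The disks $i_1|_{H_2'}$ and $i_2|_{H_2'}$ differ from $i_1|_{H_2}$ and $i_2|_{H_2}$ only by the same band, which is a common piece of $\S_d$. The scanning loop for $H_2'$ is therefore obtained from the scanning loop for $H_2$ by conjugating with a fixed path of arcs sweeping across the band, together with the inclusion $M_2'\subset M_2$ (or vice versa, after shrinking neighbourhoods). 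Conjugating a loop in $\pi_1^D$ by a path changes $\Dax_{M_2}$ only by relabelling double-point group elements by a fixed element of $\pi$. After applying $\mathfrak{p}$ this relabelling disappears.

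\textbf{The main obstacle: the $\pi_1(F_1)$-adjustment.} The choices $\alpha_1,\alpha_2\in\pi_1(F_1)$ used for one decomposition need not be valid for the other. We handle this exactly as in the well-definedness argument. Any two valid choices differ by an element of $K$, and by Proposition \ref{0cp} elements of $K$ contribute zero after $\mathfrak{p}$. So we only need that the barbell diffeomorphisms $\beta_\gamma$ used for $\mathcal{H}$ can be rewritten, up to isotopy rel $\S_d(H_0\cup H_1')$, as barbells $\beta_{\gamma'}$ of $\mathcal{H}'$ with the same effect on $\lambda_0$ (by Lemma \ref{classchange}). This holds because vertical-meridian barbells depend only on the homotopy class of the path $\gamma$ (Proposition \ref{homotopepath}), and meridians of slid $1$-handles are isotopic to band sums of old meridians.

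Concatenating the moves, and using additivity of $\Dax_{M_2}$ under stacking of scanning loops, gives equality of the two invariants in $\ZZ[\mathcal{C}]^\sigma$.
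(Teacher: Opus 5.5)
\textbf{Genuine gap: your third move is never verified.} You list a pointed diffeomorphism $\varphi$ of $\Sigma$ carrying $\mathcal{H}$ to $\mathcal{H}'$, acting on $\pi_1(\mathrm{sk}^1)$ by an automorphism of the free group, as one of the moves. Your verification, however, only treats isotopies and handle slides, so nothing shows $\Dax_{\varphi(\mathcal{H})}=\Dax_{\mathcal{H}}$. Your isotopy argument does not cover $\varphi$ when it is not isotopic to the identity. Transporting the construction by a bundle map $F$ covering $\varphi$ only turns the question into a naturality statement for $\Dax_{\mathcal{H}}$ under $F$, applied to $F^{-1}\circ i_j\circ\varphi$. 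In the paper, Lemma \ref{naturality} is itself proved using Lemma \ref{hddecompositon}, so that route is circular.

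\textbf{The missing idea: no such move is needed.} The definition sees the decomposition only through the subsurface $P=H_0\cup H_1$ and the disk $H_2=\overline{\Sigma\setminus P}$. Concretely, it uses the complement $M_2$ of $\nu(\S_d(P))$, the disks $i_j(H_2)$, and adjustments coming from loops of pointed embeddings of $P$. Since $\pi_1(F_1)\to\pi_1(E_1)$ is onto, $H_0$ plays no role, and the cores enter only through the existence statement Lemma \ref{moveh1}. By the disk theorem, $H_2'$ is ambient isotopic to $H_2$ in $\Sigma\setminus\{b_0\}$. So any two admissible decompositions differ, as far as the construction is concerned, by a pointed isotopy $\psi_t$ of $\Sigma$. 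Extend $\S_d\circ\psi_t$ to an ambient isotopy $\Psi_t$ of $M$, which fixes $b$. Then $\Dax_{\psi_1(\mathcal{H})}(i_1,i_2)=(\Psi_1)_*\Dax_{\mathcal{H}}(\Psi_1^{-1}\circ i_1\circ\psi_1,\Psi_1^{-1}\circ i_2\circ\psi_1)$. This equals $\Dax_{\mathcal{H}}(i_1,i_2)$, because $(\Psi_1)_*$ is the identity on $\ZZ[\mathcal{C}]$ and $\Dax_{\mathcal{H}}$ already depends only on components of $E_2$; that is where Proposition \ref{0cp} enters. So a precise version of your isotopy step is the whole proof, and both handle slides and $\varphi$ are subsumed by it. The paper gives no argument of its own here beyond citing \cite[Lemma 4.5]{LWXZ25}.

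\textbf{The handle-slide paragraph also needs repair.} You say $H_2'$ differs from $H_2$ by a band. You also say that $i_1,i_2$, which agree with $\S_d$ only on $H_0\cup H_1$, agree with it on $H_0\cup H_1'$. The second claim requires $H_0\cup H_1'\subseteq H_0\cup H_1$. That is, you must take the new spine inside the old subsurface, so that $H_2'$ is $H_2$ plus an annular collar on which every embedding equals $\S_d$.

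With the decompositions nested in this way, your ``main obstacle'' disappears. A loop of embeddings of the larger subsurface restricts to a loop of embeddings of the smaller one. Hence adjustments chosen for the decomposition with the larger $H_0\cup H_1$ also serve for the other, and no rewriting of barbells is needed. As written, that rewriting step is unsupported anyway. Proposition \ref{homotopepath} only gives $\mathfrak{p}\circ\Dax(\beta(D_0),\beta'(D_0))=0$, not an isotopy of barbell diffeomorphisms rel $\S_d(H_0\cup H_1')$. The claim about meridians of slid handles is likewise not justified.
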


\begin{lem}
  The Dax invariants are independent on the choices of the basepoint of $\Sigma$.
\end{lem}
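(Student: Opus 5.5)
The plan is to reduce the change of basepoint to a change of handle decomposition, and then invoke Lemma \ref{hddecompositon}. Let $b_0, b_0' \in \Sigma$ be two choices of basepoint, and fix an embedded arc $\delta \subset \Sigma$ from $b_0$ to $b_0'$ that avoids the points $p, p_1, \dots, p_{|d|}$ used to build $S_0$ and $\S_d$. The first step is to choose a handle decomposition of $\Sigma$ whose $0$-handle $H_0$ is a thickening of $\delta$. Then $H_0$ contains both $b_0$ and $b_0'$, so it is admissible for either basepoint. By Lemma \ref{hddecompositon}, the invariant computed with respect to $b_0$ (resp.\ $b_0'$) may be computed with this single decomposition.

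Next we compare the two constructions for pointed embeddings $i_1, i_2$. On the $b_0'$-side the relevant embeddings are related by precomposing with a point-pushing diffeomorphism $\psi$ of $\Sigma$ along $\delta$. This $\psi$ is isotopic to the identity. Since the bundle over a neighbourhood of $\delta$ is trivial, it lifts to an ambient isotopy $F_t$ of $M$ that moves $\S_d(b_0)$ to $\S_d(b_0')$ along $\S_d(\delta)$ and preserves $\S_d(\Sigma)$ setwise.

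The key observation is that, once $i_1|_{H_0\cup H_1} = i_2|_{H_0\cup H_1} = \S_d|_{H_0\cup H_1}$, all the subsequent data are literally the same for both basepoints. These data are the manifold $M_2$, the fibre $F_2$, the disk $D_0$, the arc $I_0$, the barbell corrections $\alpha_1, \alpha_2 \in \pi_1(F_1)$ from Lemma \ref{moveh1}, and the scanning loop $\langle f_{\alpha_1}\circ i_1, f_{\alpha_2}\circ i_2\rangle$. This is because none of them refers to the basepoint except through $H_0$, and $H_0$ is now shared. The normalisation isotopy making $i_k$ agree with $\S_d$ on $H_0$ can be taken rel both $b_0$ and $b_0'$, since it comes from isotopy extension of an isotopy of $H_0\cup H_1$ and $H_0$ is contractible.

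The only remaining difference is the identification of $\pi_1(M_2;I_0)$ with $\pi$, which uses $\pi_1(M,b)$ in one case and $\pi_1(M,b')$ in the other. These are identified by the change-of-basepoint isomorphism along $\S_d(\delta)$. This isomorphism is canonical up to an inner automorphism, determined by the homotopy class of the path, and any two choices differ by conjugation. Hence after applying $\mathfrak{p}$ the two values of $\mathfrak{p}\circ\Dax_{M_2}$ agree in $\ZZ[\mathcal{C}]$.

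I expect the main obstacle to be verifying that the double-point group elements $g_j$ in the definition of $\Dax_{M_2}$ transform exactly by this conjugation. Concretely, one must check that changing the basepoint from the start of $I_0$ together with $b$ to $b'$ only conjugates each $g_j$ by the fixed element represented by $\S_d(\delta)$ composed with the arc in $D_0$. The signs $\epsilon_j$ are unchanged, since they depend only on local orientations. I would check this by writing each $g_j$ explicitly as a loop through the start of $I_0$ and transporting it along the fixed path.
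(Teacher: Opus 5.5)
The paper gives no argument of its own here; it only says that the proof of \cite[Lemma 4.6]{LWXZ25} carries over. Your strategy is a sensible self-contained route: a single $0$-handle containing both basepoints, Lemma~\ref{hddecompositon}, and the observation that the basepoint of $M$ enters only through $\pi_1(M_2)\cong\pi$, which is invisible in $\ZZ[\mathcal{C}]$. However, the step that matches the two pointed set-ups has a gap.

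\textbf{The gap.} You never say which $b_0'$-pointed pair is compared with a given $b_0$-pointed pair. The transport $\psi$, $F_t$ of your second paragraph is introduced and then never used. Your third paragraph instead treats $i_k$ as pointed at both $b_0$ and $b_0'$, and asserts that the normalising isotopy can be taken rel both points because $H_0$ is contractible. That reason is wrong. An isotopy fixing $b_0$ and $b_0'$ that makes $i_k$ agree with $\S_d$ on $H_0\supset\delta$ exists only if the arcs $i_k(\delta)$ and $\S_d(\delta)$ are homotopic rel endpoints. If $i_k$ is homotopic to $\S_d$ rel $b_0$ and rel $b_0'$ separately, the class $w=[\overline{i_k(\delta)}\cdot\S_d(\delta)]$ is only forced to be central in $\pi$.

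For $g\ge 2$ this gives $w=1$, and your step survives with that justification. For $g=1$ it fails. Let $P$ be the point-push of $b_0'$ around an essential loop $\ell$ missing $b_0$. Then $P$ is isotopic to the identity rel $b_0$, and also rel $b_0'$, since point-pushing is trivial in $\MCG(T^2,b_0')\cong SL_2(\ZZ)$. Yet $P(\delta)\simeq\delta\cdot\ell^{\pm1}$ rel endpoints, so $i=\S_d\circ P$ admits no normalising isotopy fixing both points.

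\textbf{The repair.} Use the transport, and choose its support so that it does not touch $M_2$.
\begin{itemize}
\item Take $\psi$ supported in ${\rm int}(H_0)$. Lift it to $F_t(x,u)=(\psi_{\rho(u)t}(x),u)$ on $\nu(\S_d(H_0))=H_0\times U$, where $\rho\equiv 1$ near $(0,0,1)^T$ and $\rho\equiv 0$ near $\partial U$, and extend by the identity.
\item Then $F_1\circ\S_d\circ\psi^{-1}=\S_d$ and $F_1(b)=b'$. Moreover $F_1$ is the identity on $M_1\supset M_2$, so it commutes with every $f_\alpha$.
\item Normalise $i_k$ rel $b_0$ only, including the corrections $f_{\alpha_k}$. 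Then $F_1\circ f_{\alpha_k}\circ i_k\circ\psi^{-1}$ represents the transported class for $b_0'$ and agrees with $\S_d$ on $H_0\cup H_1$.
\item Its restriction to $H_2$ is literally the same as before. Hence both computations evaluate $\Dax_{M_2}$ on the same pair of disks with the same arc $I_0$.
\end{itemize}

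With this, the obstacle you anticipate disappears. The $g_j$ are read off at the start of $I_0$ in both cases. The conjugacy classes of $\pi_1(M,b)$ and of $\pi_1(M,b')$ are both canonically the nontrivial free homotopy classes of loops in $M$.
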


Let $\mathcal{E}'_d$ be the set consisting of the non-parameterized embedded surfaces in $M$,
satisfying that there exist parameterizations for them homotopic to $\S_d$.
There is an equivalence relation $\sim'$ on $\mathcal{E}'_d$.
For $\Sigma_1$ and $\Sigma_2$ in $\mathcal{E}'_d$,
we say $\Sigma_1 \sim' \Sigma_2$,
if there are parameterizations $i_1,i_2$ for $\Sigma_1,\Sigma_2$ respectively,
such that $i_1$ and $i_2$ are isotopic in $M$.

\begin{lem}\label{parameter}
  The map
  $$ \pi_0 ({\rm Emb}^{[\S_d]}_\bullet (\Sigma,M)) \ \rightarrow \ \mathcal{E}'_d / \sim' $$
  mapping the connected component of ${\rm Emb}^{[\S_d]}_\bullet (\Sigma,M)$ containing an embedding $i$ to the equivalence class of the image of $i$, 
  is a bijection.
  Therefore, the Dax invariants can be regarded as being defined for pairs of elements in $\mathcal{E}'_d / \sim'$.
\end{lem}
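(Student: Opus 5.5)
We must show the map from $\pi_0({\rm Emb}^{[\S_d]}_\bullet(\Sigma,M))$ to $\mathcal{E}'_d/\sim'$ is well defined, surjective and injective, following the argument of \cite[Lemma 3.1]{LWXZ25}. Well-definedness is immediate: a path in ${\rm Emb}^{[\S_d]}_\bullet(\Sigma,M)$ is an isotopy of parametrizations, so the images are $\sim'$-equivalent. For surjectivity, take $\Sigma_1\in\mathcal{E}'_d$ with a parametrization $i$ homotopic to $\S_d$. The homotopy moves $i(b_0)$ along some path to $b$. Dragging along this path by an ambient isotopy $\Phi_t$ of $M$ with $\Phi_1(i(b_0))=b$, we get $\Phi_1\circ i$ with the same image class, pointed, and freely homotopic to $\S_d$. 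We then need $\Phi_1\circ i$ pointed homotopic to $\S_d$. A free homotopy through a basepoint loop $\delta$ changes the induced map on $\pi_1$ by conjugation by $[\delta]\in\pi$. We can absorb this by sliding the image surface along itself: choose a loop in $\Sigma_1$ and push the basepoint around it by an ambient isotopy supported near $\Sigma_1$. This realizes conjugation by elements of $i_*\pi_1(\Sigma)=\pi$, since $i_*$ is an isomorphism. After this, Proposition \ref{bijmap} says pointed homotopy classes are detected by $(\phi_1,\phi_2)$. The $\phi_1$ component now agrees. The $\pi_2(M)\cong\ZZ$ component is detected by the algebraic intersection with a fixed class, such as $\S$ or the fiber. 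This is a free-homotopy invariant, so it also agrees.

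For injectivity, suppose $i_1,i_2$ are pointed embeddings in ${\rm Emb}^{[\S_d]}_\bullet$ whose images are $\sim'$-equivalent. Then there are reparametrizations $i_1\circ\psi_1$ and $i_2\circ\psi_2$, with $\psi_k\in\Diff(\Sigma)$, that are isotopic in $M$. Equivalently, $i_2$ is isotopic (unpointed) to $i_1\circ\psi$ for some $\psi\in\Diff(\Sigma)$. The main step is to show $\psi$ is pointed isotopic to the identity, and that the unpointed isotopy can be made pointed. First I would make the isotopy pointed by the same basepoint-dragging argument. The resulting track of the basepoint is a loop in $M$. Because $\pi_1(\Sigma)\to\pi_1(M)$ is an isomorphism via $i_1$, this loop can be undone by a point-pushing diffeomorphism of $\Sigma$, which changes $\psi$ within its mapping class modulo the point-pushing subgroup. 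After this, $i_1\circ\psi$ and $i_1$ are pointed homotopic, both being homotopic to $\S_d$. Hence $\psi_*=\mathrm{id}$ on $\pi_1(\Sigma,b_0)$, because $(i_1)_*$ is injective. By Dehn–Nielsen–Baer, $\psi$ is then pointed isotopic to the identity, so $i_1\circ\psi$ is pointed isotopic to $i_1$. This gives injectivity.

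The main obstacle is the bookkeeping in the injectivity step. We must check that the basepoint conjugation is compatible with the homotopy-class condition, and that the $\pi_2$ component of Proposition \ref{bijmap} does not obstruct it. I would handle the latter using that $\phi_2$ is measured by intersection numbers with $\S$, which are unchanged by reparametrization and isotopy. The final assertion, that $\Dax$ descends to $\mathcal{E}'_d/\sim'$, then follows directly from the bijection together with Theorem \ref{thm1}(1).
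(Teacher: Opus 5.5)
Your proposal is correct and follows the route the paper intends. The paper itself just cites the proof of \cite[Lemma 3.1]{LWXZ25}, and your use of Proposition \ref{bijmap} to control the $\pi_2(M)$-component is the modification needed for the twisted bundle. One small slip: that component is detected by intersection with $\S$ (or with $\S_0$), not with the fiber. Adding $n\in\pi_2(M)$ changes the homology class by $n$ times the fiber class, which pairs to zero with the fiber, so the fiber only sees the base degree, and that is already fixed by $\phi_1$.
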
 

\subsubsection{Additivity}

The additivity of the Dax invariants for embedded disks immediately imply the following lemma. 

\begin{lem}
  For $i_1,i_2,i_3 \in {\rm Emb}^{[\S_d]}_\bullet (\Sigma,M)$, it holds the following
  $$ \Dax(i_1,i_3) = \Dax(i_1,i_2) + \Dax(i_2,i_3). $$
\end{lem}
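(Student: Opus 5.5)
The plan is to reduce the statement to the additivity of the relative Dax invariant for disks in $M_2$, after putting all three embeddings into a common normal form. First, by Lemma~\ref{parameter} and the isotopy-extension argument used before Definition~\ref{defofdax}, I will isotope $i_1,i_2,i_3$ rel basepoint so that
\[
i_1|_{H_0\cup H_1}=i_2|_{H_0\cup H_1}=i_3|_{H_0\cup H_1}=\S_d|_{H_0\cup H_1}.
\]
This changes none of the three Dax invariants, since Definition~\ref{defofdax} is stated on $\pi_0(E_2)$. Next, by Lemma~\ref{moveh1} I will choose $\alpha_1,\alpha_2,\alpha_3\in\pi_1(F_1)$ such that each $D_k:=f_{\alpha_k}\circ i_k|_{H_2}$ lies in the class $[\S_d|_{H_2}]\in\pi_2(M_2,\partial D_0)$. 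In particular the three disks are pairwise homotopic rel boundary.

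With this single choice of $\alpha_k$ used for all three pairs, the definition gives
\[
\Dax(i_a,i_b)=\mathfrak{p}\circ\Dax_{M_2}(\langle D_a,D_b\rangle)
\]
for each $a,b$. I will then check the scanning loops. Concatenating the scanning path of $D_1$ with the reverse of that of $D_2$, and the scanning path of $D_2$ with the reverse of that of $D_3$, gives a loop homotopic to the concatenation of the scanning path of $D_1$ with the reverse of that of $D_3$, because the middle segments cancel. Hence
\[
\langle D_1,D_2\rangle\cdot\langle D_2,D_3\rangle=\langle D_1,D_3\rangle
\]
in $\pi_1^D({\rm Emb}_\partial(I,M_2);I_0)$. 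Since $\Dax_{M_2}$ is a group homomorphism (\S\ref{subsec:daxofm2}) and $\mathfrak{p}$ is linear, the identity follows.

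The main point needing care is that Definition~\ref{defofdax} a priori lets one pick a different pair $(\alpha_1,\alpha_2)$ for each pair $(i_1,i_2)$. I therefore need that the value is independent of these choices, so that the common choice above is legitimate. Suppose $\alpha_1'$ is another valid choice for $i_1$. Then $\alpha_1'-\alpha_1\in K$, because $\mathcal{P}$ is independent of the disk chosen (remark after Definition~\ref{pqk}). The Dax invariant then changes by $\mathcal{Q}(\alpha_1'-\alpha_1)$, using additivity of disk Dax together with the naturality $\Dax_{M_2}(f\circ D,f\circ D')=f_*\Dax_{M_2}(D,D')$ under diffeomorphisms of $M_2$ fixed near $\partial M_2$ that act trivially on $\pi$. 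By Proposition~\ref{0cp} this change vanishes after applying $\mathfrak{p}$. I would cite this well-definedness, which is really part of the definition, rather than reprove it; with it in hand, the additivity is immediate.
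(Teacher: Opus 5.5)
Your proposal is correct and follows the paper's approach. The paper's proof is the one-line remark that the lemma follows immediately from additivity of the disk Dax invariant, and your argument spells out exactly that reduction: a common normal form on $H_0\cup H_1$, one simultaneous choice of $\alpha_1,\alpha_2,\alpha_3$ via Lemma~\ref{moveh1}, and concatenation of scanning loops. You also correctly rest this on the choice-independence of Definition~\ref{defofdax} that Proposition~\ref{0cp} is meant to guarantee.
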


\subsubsection{Naturality}

Regarding the naturality of the Dax invariants, we have the following lemma.

\begin{lem}\label{naturality}
  For any 
  pointed diffeomorphism $f: M \rightarrow M$, it holds that
  $$ \Dax(f \circ i_1,f \circ i_2) = f_* \circ \Dax(i_1, i_2). $$
  Here $f_*$ is the induced automorphism on $\ZZ[\mathcal{C}]$, and $i_1, i_2: \Sigma \hookrightarrow M$ are embeddings that homotopic to $\S_d$.
\end{lem}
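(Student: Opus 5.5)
The plan is to reduce to a diffeomorphism that preserves all the auxiliary data used in Definition \ref{defofdax}, and then use naturality of the Dax invariant for disks. First, since $f$ is pointed and $f\circ i_1, f\circ i_2$ are again homotopic to one another, the definition of $\Dax(f\circ i_1, f\circ i_2)$ only requires a handle decomposition with the basepoint in the $0$-handle. By Lemma \ref{hddecompositon} and the independence of choices, we may compute $\Dax(f\circ i_1,f\circ i_2)$ using the transported data: the reference embedding $f\circ \S_d$, the neighbourhoods $f(\nu(\S_d(e^0)))$ and $f(\nu(\S_d(e^1_j)))$, the complements $f(M_1)$ and $f(M_2)$, the fiber sphere $f(S_0)$ and the arc $f\circ I_0$.

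One subtlety is that $f\circ\S_d$ need not be homotopic to $\S_d$. However, the definition of $\Dax$ for a pair only uses a common reference restriction on $H_0\cup H_1$, and the machinery of Section \ref{sec:dax} goes through verbatim after conjugating by $f$. In particular, the fibration tower, Lemma \ref{moveh1} and Proposition \ref{0cp} all transport under $f$.

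The key step is as follows. Isotope $i_1,i_2$ so that they agree with $\S_d$ on $H_0\cup H_1$, and choose $\alpha_1,\alpha_2\in\pi_1(F_1)$ as in Definition \ref{defofdax}. Then $f\circ i_1$ and $f\circ i_2$ agree with $f\circ\S_d$ on $H_0\cup H_1$. The conjugates $f\circ f_{\alpha_k}\circ f^{-1}$ are exactly the arc-pushing diffeomorphisms of the transported loops $f\circ\alpha_k$, and they satisfy
\[
  f\circ f_{\alpha_k}\circ i_k=(f f_{\alpha_k} f^{-1})\circ(f\circ i_k).
\]
Moreover $f$ restricts to a diffeomorphism $M_2\to f(M_2)$. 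Hence the scanning loop $\langle f f_{\alpha_1} i_1, f f_{\alpha_2} i_2\rangle$ is the image under $f$ of $\langle f_{\alpha_1} i_1, f_{\alpha_2} i_2\rangle$, so it still lies in $\pi_1^D$. Applying $f$ to a generic null-homotopy through immersions takes each double-point group element $g_j$ to $f_*(g_j)$, after identifying basepoints along $f\circ I_0$. The signs $\epsilon_j$ are preserved when $f$ is orientation preserving.

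If $f$ reverses orientation, each sign flips. In that case the convention for $f_*$ on $\ZZ[\mathcal{C}]^\sigma$ must absorb the sign, and I would check this against the paper's intended convention. I expect to restrict to, or define $f_*$ for, orientation-preserving $f$, or else include the factor $\pm1$.

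The main obstacle is the change of basepoint and base arc. The group element $\Dax_{f(M_2)}$ is computed with respect to $f\circ I_0$, whereas $\Dax(f\circ i_1,f\circ i_2)$ is officially computed in $M_2$ with respect to $I_0$ and $\S_d$. Comparing the two identifications of $\pi_1$ introduces a conjugation by some $h\in\pi$, possibly varying the representative in $\ZZ[\pi\setminus\{1\}]$. This is precisely why one projects via $\mathfrak{p}$ to conjugacy classes. Combined with Proposition \ref{0cp}, which kills the ambiguity from different choices of $\alpha_k$, the projected value is well defined and equals $f_*\circ\Dax(i_1,i_2)$.
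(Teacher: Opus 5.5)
\textbf{The gap.} The problem is the step where you compute $\Dax(f\circ i_1,f\circ i_2)$ with the transported data $f\circ\S_d|_{H_0\cup H_1}$, $f(M_2)$, $f(S_0)$, $f\circ I_0$, and justify it by Lemma~\ref{hddecompositon} and ``independence of choices''. Transporting the construction of Section~\ref{sec:dax} by $f$ does give a well-defined invariant, say $\Dax^f$. But $\Dax^f(f\circ i_1,f\circ i_2)=f_*\circ\Dax(i_1,i_2)$ is then a tautology; the real content of the lemma is that $\Dax^f=\Dax$, and you never show this.

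The reference is not a free choice, contrary to your remark that the definition ``only uses a common reference restriction on $H_0\cup H_1$''. Definition~\ref{defofdax} and its well-definedness (Lemma~\ref{K0}, Proposition~\ref{0cp}) use the specific reference $\S_0|_{H_0\cup H_1}$ in the bundle model, together with its standard complement $M_2$. The independence lemmas only change the handle decomposition, the basepoint or the parametrization, and they keep the reference of that form. In fact, if $f_*\neq\mathrm{id}$ on $\pi$, then $f\circ\S_d|_{H_0\cup H_1}$ and $\S_d|_{H_0\cup H_1}$ induce different homomorphisms on $\pi_1$. So $f\circ i_k$ cannot even be isotoped into the position that Definition~\ref{defofdax} requires. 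This mismatch of reference data is the real obstacle, not the basepoint conjugation you single out, which $\mathfrak{p}$ absorbs harmlessly.

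\textbf{How to repair it.} The missing idea is to realize $f_*$ by a pointed diffeomorphism $f_\dagger$ of $\Sigma$ (Dehn--Nielsen), and then to replace $f$ by a map that carries standard data to standard data for the handle decomposition $f_\dagger(H_\bullet)$. The paper does this with a bundle automorphism $F$ covering $f_\dagger$ such that $F\circ\S_0=\S_0\circ f_\dagger$. For this $F$, your transport argument is legitimate via Lemmas~\ref{parameter} and~\ref{hddecompositon}; that is exactly the paper's first step. The remaining factor $F^{-1}\circ f$ acts trivially on $\pi$, and the paper handles it by the argument of \cite[Lemma 4.8]{LWXZ25}.

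Alternatively, you can keep your one-step argument. Take an ambient isotopy carrying $f\circ\S_0|_{H_0\cup H_1}$ and its tubular neighbourhood onto $\S_0\circ f_\dagger|_{H_0\cup H_1}$ and its standard neighbourhood. This exists because the two are homotopic rel basepoint, and the paper upgrades such homotopies to ambient isotopies just before Definition~\ref{defofdax}. Then use that an ambient isotopy acts trivially on $\mathcal{C}$.

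\textbf{Orientation.} Your orientation caveat is sound. Each $\epsilon_j$ is defined by comparison with the orientation of $M$, so an orientation-reversing $f$ negates every sign and one obtains $-f_*\circ\Dax(i_1,i_2)$. This sign cannot be absorbed, since $\ZZ[\mathcal{C}]^\sigma$ is torsion free. The paper's proof does not treat the orientation-reversing case separately.
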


\begin{proof}
  Consider the composed map $\Sigma \xrightarrow{\S_0} M \xrightarrow{f} M \xrightarrow{{\rm pr}} \Sigma$.
  By the Dehn-Nielson theorem, there is a pointed self-diffeomorphism $f_{\dagger}$ of $\Sigma$, 
  unique up to isotopy, inducing the same automorphism on $\pi$. 
  The pull-back $S^2$-bundle $(f_\dagger)^* M$ is isomorphic to $M$.
  We fix an bundle isomorphism $M \rightarrow (f_\dagger)^* M$.
  Then we compose such fixed isomorphism with the bundle map $(f_\dagger)^* M \rightarrow M$, 
  and name the results by $F: M \rightarrow M$.
  Note that $M$ is the sphere bundle of the vector bundle $\xi \oplus \underline{\RR}$ over $\Sigma$,
  where $\xi$ is the real vector bundle with $w_1(\xi) = 0, w_2(\xi) \ne 0$,
  and $\underline{\RR}$ is the trivial real line bundle.
  Thus we may require that the bundle isomorphism $M \rightarrow (f_\dagger)^* M$ fixes the $\underline{\RR}$ component.
  Namely, we may require that $F \circ \S_0 = \S_0 \circ f_\dagger$.
  
  Firstly, we show that
  $$ \Dax(F \circ i_1, F \circ i_2) = F_* \circ \Dax(i_1,i_2), $$
  where $F_*$ is the induced automorphism on $\ZZ[\mathcal{C}]$.
  Without loss of generality, after a first isotopy, we may assume that $i_1|_{H_0 \cup H_1} = i_2|_{H_0 \cup H_1} = \S_d|_{H_0 \cup H_1}$, 
  and $i_1|_{H_2}, i_2|_{H_2}: D^2 \hookrightarrow M_2$ are homotopic relative to $\partial D^2$.
  Note that the unparameterized embedded surfaces $F \circ i_1(\Sigma)$ and $F \circ i_2(\Sigma)$ 
  coincide on the union $H'$ of a 0-handle and $2g$ 1-handles.
  To be precise, $H'= F \circ \S_d(H_0 \cup H_1) = F \circ \S_0(H_0 \cup H_1) \subseteq \S_0(\Sigma)$. 
  Recall that $M_2$ is defined as Definition \ref{m1m2}.
  Let $M_2'$ be the image of $F|_{M_2}$.
  There is a commutative diagram for fundamental groups and induced homomorphisms between them
  \[
  \begin{tikzcd}
    \pi_1(M_2) \arrow[r,"F_\#"] \arrow[d,"{\rm inclusion}_\#"] & \pi_1(M'_2) \arrow[d,"{\rm inclusion}_\#"] \\
    \pi_1(M) \arrow[r,"F_\#"] & \pi_2(M).
  \end{tikzcd}
  \]
  Note that the two vertical arrows are isomorphisms.
  Going through the definition for the Dax invariants, we have
  \begin{equation*}
    \begin{split}
      \Dax(F \circ i_1, F \circ i_2) &= \Dax (F \circ i_1(\Sigma), F \circ i_2(\Sigma)) \\ 
      &= \mathfrak{p} \circ \Dax_{M_2'} (F \circ i_1|_{H_2}(D^2), F \circ i_2|_{H_2}(D^2)) \\
      &= \mathfrak{p} \circ F_\# \circ \Dax_{M_2} (i_1|_{H_2}(D^2), i_1|_{H_2}(D^2)) \\
      &= F_* \circ \Dax (i_1, i_2).
    \end{split}
  \end{equation*}
  The first equation holds because of Lemma \ref{parameter}.
  The second and fourth equations hold because of Lemma \ref{hddecompositon}.
  The third equation holds by checking the definition of the Dax invariants for embedded disks.

  Secondly, by the same argument as \cite[Lemma 4.8]{LWXZ25}
  we know that if a pointed self-diffeomorphism g of $M$ induces the identity map on $\pi$, we have
  $$ \Dax(g \circ i_1, g \circ i_2) = \Dax(i_1,i_2). $$
  Now we can heading for our results. We have
  \begin{equation*}
    \begin{split}
      \Dax(f \circ i_1, f \circ i_2) &= \Dax(F \circ F^{-1} \circ f \circ i_1, F \circ F^{-1} \circ f \circ i_2) \\
      &= F_* \circ \Dax(F^{-1} \circ f \circ i_1, F^{-1} \circ f \circ i_2) \\
      &= F_* \circ \Dax(i_1,i_2) \\
      &= f_* \circ \Dax(i_1,i_2).
    \end{split}
  \end{equation*}
  It concludes our proof.
\end{proof}

\section{Isotopy classification of surfaces with a common dual sphere}\label{sec:isoclass}

Recall that $b=\S_0(b_0)$ is the basepoint of $M$, and $S'_0$ is the fiber of the $S^2$-bundle $p : M \rightarrow \Sigma$ at $b_0$.
In this section, we are going to show the following theorem.

\begin{thm}\label{isotopyclass}
  Suppose $\Sigma_1, \Sigma_2$ are two unparameterized smoothly embedded surfaces in $M$.
  Suppose the following conditions hold.
  \begin{itemize}
    \item[(1)] The embedded surfaces $\Sigma_1, \Sigma_2$ intersect $S'_0$ transversely and positively at the point $b$.
    \item[(2)] The algebraic intersection numbers of $\Sigma_1, \Sigma_2$ with $\S_{-1}$ are identical to $d$.
    \item[(3)] The embedded surfaces $\Sigma_1, \Sigma_2$ agree with $\S_d(\Sigma)$ on a neighbourhood of $S'_0$.
  \end{itemize} 
  Then there are parameterizations $i_1, i_2$ for $\Sigma_1, \Sigma_2$, respectively, satisfying the following conditions.
  \begin{itemize}
    \item[(1)] There is a neighbourhood $\nu$ of $S'_0$, such that the intersection of $\Sigma_1,\Sigma_2$ and $\S_d(\Sigma)$ with $\nu$ is the same 2-disk.
    \item[(2)] In $M \setminus \nu$, the restriction of $i_1, i_2$ and $\S_d$ to the complement of the preimage of $\nu$, are homotopic relative to the boundary.
    \item[(3)] In $M$, $\Sigma_1$ and $\Sigma_2$ are isotopic if and only if $\Dax(\Sigma_1,\Sigma_2) = 0 \in \ZZ[\mathcal{C}]^{\sigma}$. 
  \end{itemize}
\end{thm}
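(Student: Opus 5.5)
The plan is to mirror the proof of Theorem \ref{thm1}'s well-definedness, using the dual sphere $S'_0$ to replace the 0-handle/1-handle normalization, and then to invoke \cite[Theorem 1.1]{KT24} for the disk complement.

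\textbf{Step 1 (parameterizations and conditions (1), (2)).} By hypothesis (3) there is a neighbourhood $\nu$ of $S'_0$ on which $\Sigma_1,\Sigma_2$ and $\S_d(\Sigma)$ agree. Shrinking $\nu$ to a tubular neighbourhood $S'_0\times D^2$ in which all three surfaces meet $\nu$ in the same disk $\{b\}\times D^2$ gives condition (1). We choose parameterizations $i_1,i_2$ agreeing with $\S_d$ on the preimage $D_b$ of $\nu$, with $i_k(b_0)=b$. Since $\Sigma_k$ meets $S'_0$ once, positively, the composite ${\rm pr}\circ i_k$ has degree one. After precomposing with a diffeomorphism of $\Sigma$ fixed near $D_b$ (Dehn–Nielsen–Baer), we may assume $(i_k)_*={\rm id}$ on $\pi$. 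Proposition \ref{bijmap} then classifies $i_k$ up to pointed homotopy by the $\pi_2(M)\cong\ZZ$ component. That component is detected by the algebraic intersection with $\S_{-1}$, which hypothesis (2) fixes to $d$; hence $i_k\simeq \S_d$. The complement $W=M\setminus\nu$ is $M$ with a neighbourhood of a sphere having a geometric dual removed, so $\pi_1(W)\cong\pi$. The restrictions $i_k|_{\Sigma\setminus D_b}$ are surfaces with one boundary circle in $W$, and $\Sigma\setminus D_b$ deformation retracts to a wedge of circles relative to a boundary arc. A relative version of the argument of Proposition \ref{bijmap} therefore shows they are homotopic rel boundary to $\S_d$. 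The remaining $\pi_2$-obstruction for the top cell vanishes because it is measured by intersection with $\S$ as in (\ref{glueagm}), and any residual discrepancy in $\pi_2(W)$ can be absorbed by tubing into parallel copies of $S'_0$, which is a homotopy. This gives condition (2).

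\textbf{Step 2 (the ``only if'' direction of (3)).} This follows from Theorem \ref{thm1}(1), together with Lemma \ref{parameter} to pass between parameterized and unparameterized surfaces.

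\textbf{Step 3 (the ``if'' direction of (3)).} This is the main obstacle. The plan is to reduce to a disk problem in a complement where \cite[Theorem 1.1]{KT24} applies and the dual sphere kills the $\mathcal C$ versus $\pi\setminus 1$ ambiguity.

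First, isotope $i_1,i_2$ so that they also agree on $H_0\cup H_1$, arranged to contain $D_b$. This uses the fibration tower (\ref{fbtw}): since $i_1\simeq i_2$, the restrictions to $H_0\cup H_1$ are isotopic, and we extend by isotopy extension. The isotopy can be chosen supported away from $\nu$, because $S'_0$ is a geometric dual and we may Norman-tube any crossings with $S'_0$ off along it.

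Next, using Lemma \ref{moveh1} and the action of $\pi_1(F_1)$ (barbell diffeomorphisms), arrange that $i_1|_{H_2}$ and $i_2|_{H_2}$ are homotopic rel boundary in $M_2$. Their disk Dax invariant $\delta\in\ZZ[\pi\setminus1]^\sigma$ then has image $\mathfrak p(\delta)=\Dax(\Sigma_1,\Sigma_2)=0$ in $\ZZ[\mathcal C]^\sigma$.

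Then exploit the extra freedom of $\pi_1(F_1)$: loops in $K$ whose $\mathcal Q$-value changes $\delta$ by conjugation-type relators $g-hgh^{-1}$, which are realized by vertical-meridian barbells as in Propositions \ref{homotopepath} and \ref{existenceofbarbell}. The aim is to make $\delta=0$ exactly. Alternatively, use that the dual $S'_0$ meets $M_2$ so that $\pi_1$ of the disk's boundary region lets one conjugate double-point group elements freely. Once $\delta=0$, \cite[Theorem 1.1]{KT24} gives an isotopy rel boundary of the disks in $M_2$. Since the $\pi_1(F_1)$-actions are realized by ambient isotopies of $M$, this yields $\Sigma_1\approx\Sigma_2$.

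The delicate point is verifying that every element of $\ker\mathfrak p$ in $\ZZ[\pi\setminus1]^\sigma$ is realized by some $\mathcal Q(\alpha)$ with $\alpha\in K$. I would attempt this by an explicit barbell computation for each generator $g-hgh^{-1}$, with $h$ a standard generator, by sliding along the corresponding 1-handle.
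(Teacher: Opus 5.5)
Your proposal has a genuine gap in Step 3, at the point where you invoke \cite{KT24} for the disks in $M_2$.

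\textbf{Step 3.} The Kosanovi\'c--Teichner theorem, like Gabai's, needs a boundary geometric dual: a framed sphere $G\subset\partial X$ meeting $\partial D$ in one point. In $M_2$ there is none. We have $\partial M_2=\partial\big((H_0\cup H_1)\times U\big)\cong\#_{2g}(S^1\times S^2)$. The circle $\partial i_k(H_2)=\partial(H_0\cup H_1)\times\{(0,0,1)^T\}$ is null-homologous there: push it to $\partial(H_0\cup H_1)\times\{u\}$ with $u\in\partial U$, where it bounds $(H_0\cup H_1)\times\{u\}$. So every closed surface in $\partial M_2$ meets it algebraically zero times.

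Consequently, even $\delta=0$ in $\ZZ[\pi\setminus 1]$ would not give an isotopy rel boundary. Moreover, $\delta=0$ is itself not established. The paper only proves $\mathcal Q(K)\subseteq\ker\mathfrak p$ (Proposition \ref{0cp}), and the reverse inclusion, which you flag as the ``delicate point'', is left open.

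The paper avoids both problems by drilling out the dual. It works in $M_2'=M_2\setminus\nu$, with $\nu=p^{-1}(D)\supseteq\nu(\S_0(H_0))$.
\begin{itemize}
\item A fiber over a point of $\partial D$ lies in $\partial\nu\subset\partial M_2'$ and is a boundary dual $G$.
\item Then \cite[Theorem 1.5]{KT24} gives $i_1|_{H_2}\simeq i_2|_{H_2}+{\rm fm}(\alpha)^G$ with $\alpha\in\ZZ[\pi'\setminus1]^\sigma$. Here $\pi'=\pi_1(M_2')$ is free of rank $2g$ and only surjects onto $\pi$.
\item Write $\alpha=\beta+\sigma(\beta)$ and realize it by self-referential tubes.
\item The reduction from $\ZZ[\pi'\setminus1]^\sigma$ to $\ZZ[\mathcal C]^\sigma$ uses isotopies in $M$. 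These are not supported in $M_2'$ and are not barbells coming from $\pi_1(F_1)$. Pushing a tube across $\nu$ changes $h_j$ to any element with the same image in $\pi$. Sliding its attaching point around the surface conjugates it.
\item Since $\mathfrak p(\alpha)=\Dax(\Sigma_1,\Sigma_2)=0$, the tubes then cancel.
\end{itemize}
No surjectivity statement about $\mathcal Q|_K$ is needed.

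\textbf{Step 1.} This step contains a fixable error. The space $W=M\setminus\nu\cong(\Sigma\setminus D)\times S^2$ has $\pi_1(W)=\pi'$, not $\pi$. The dual of $S'_0$ is a genus-$g$ surface, not a sphere, so the meridian of $S'_0$ (the product of commutators) is nontrivial in $\pi_1(W)$.

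Dehn--Nielsen--Baer on the closed surface therefore matches the $\pi_1$-maps only after projecting to $\pi$. It does not by itself make $i_k|_{\Sigma\setminus D_b}$ and $\S_d|_{\Sigma\setminus D_b}$ agree on the $1$-skeleton rel boundary in $W$. The paper instead uses Lemma \ref{minusd}:
\begin{itemize}
\item $p_1\circ i_k|_{\Sigma\setminus D}$ is homotopic rel boundary to a homeomorphism, so after a reparameterization rel $\partial$ it is homotopic to the identity.
\item The $S^2$-coordinate is then a degree, which equals the intersection number with $\tau\circ\S_0\simeq\S_{-1}$, namely $d$.
\end{itemize}

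Two side remarks in your proposal are also wrong, though neither is needed. Tubing into parallel copies of $S'_0$ is not a homotopy, since it changes the class; once the degree is computed there is no residual obstruction to absorb. Norman-tubing 1-handle arcs off $S'_0$ does not make sense for arcs. Once (2) holds in $W$, the homotopy of the $1$-skeleton can simply be perturbed to an isotopy inside $M_1'=W$.

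Step 2 is fine.
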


We need the following lemma proved in \cite[Lemma 6.4]{LWXZ25}.

\begin{lem}\label{minusd}
  Let $D$ be a smoothly embedded 2-disk in $\Sigma$.
  Suppose $h: \Sigma \setminus D \rightarrow \Sigma \setminus D$ is a continuous map satisfying $h|_{\partial (\Sigma \setminus D)} = {\rm id}$.
  Then $h$ is homotopic to a homeomorphism relative to $\partial (\Sigma \setminus D)$.
\end{lem}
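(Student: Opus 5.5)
The plan is to reduce the statement to algebra on $\pi_1$ and then use the Dehn--Nielsen--Baer theorem for surfaces with one boundary component. Write $S = \Sigma \setminus D$. This is a compact orientable surface of genus $g$ with one boundary circle $\partial S$. Choose the basepoint $x_0 \in \partial S$. Then $\pi_1(S,x_0) = F$ is free on $a_1,b_1,\dots,a_g,b_g$, and the boundary loop represents $c=\prod_{k}[a_k,b_k]$. Since $h|_{\partial S}={\rm id}$, the map $h$ fixes $x_0$, and the induced endomorphism $h_*: F \to F$ satisfies $h_*(c)=c$.

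\textbf{Step 1 (homotopy classification rel boundary).} I will show that two maps $h_0,h_1 : S \to S$ that agree with the identity on $\partial S$ and induce the same endomorphism of $\pi_1(S,x_0)$ are homotopic rel $\partial S$. Give $S$ a CW structure consisting of $\partial S$ together with $2g$ arcs from $x_0$ (a spine), and one $2$-cell. On each $1$-cell, the two maps define loops based at $x_0$ that represent the same element of $F$. Hence they are homotopic rel endpoints, and this gives a homotopy rel $\partial S$ on the $1$-skeleton. The remaining obstructions to extending over the $2$-cell, and to the cylinder $S\times I$, lie in groups built from $\pi_n(S)$ with $n\ge 2$. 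These vanish because $S$ is aspherical (it deformation retracts to a wedge of circles). This argument is essentially the one in the proof of Proposition \ref{bijmap}, but now with trivial $\pi_2$.

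\textbf{Step 2 ($h_*$ is an automorphism).} The map $h$ is a map of pairs $(S,\partial S)\to(S,\partial S)$ restricting to the identity on $\partial S$. Since $\partial: H_2(S,\partial S)\to H_1(\partial S)$ is an isomorphism of copies of $\ZZ$, $h$ has degree $1$. A degree-one map of compact orientable surfaces, rel boundary, is surjective on $\pi_1$. To see this, suppose the image $H=h_*(F)$ were a proper subgroup. Then $h$ lifts to the covering $\tilde S \to S$ corresponding to $H$. If the covering is infinite, $H_2(\tilde S,\partial \tilde S)$ carries no fundamental class, which contradicts the fact that the degree of $h$ is nonzero. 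If the covering is finite of degree $k>1$, then the degree would be divisible by $k$. An alternative for this step is to glue in ${\rm id}_D$ to get a degree-one self-map of the closed surface $\Sigma$. Either way, $h_*$ is surjective, and because finitely generated free groups are Hopfian, $h_*$ is an automorphism of $F$ fixing $c$.

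\textbf{Step 3 (realization).} I will invoke the bordered version of Dehn--Nielsen--Baer (due to Zieschang and Nielsen). It says that every automorphism of $F=\pi_1(S,x_0)$ fixing the boundary word $c$ is induced by a homeomorphism $\varphi: S\to S$ with $\varphi|_{\partial S}={\rm id}$. Applying this to $h_*$ and then using Step 1 for the pair $h,\varphi$ gives the desired homotopy rel $\partial S$.

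The main obstacle I expect is Step 3. Quoting the realization theorem in the rel-boundary form requires care, because automorphisms are only defined up to the boundary-fixing condition and Dehn twists about $\partial S$ act trivially on $F$. If a clean reference is unavailable, I would instead glue in ${\rm id}_D$ and use classical Dehn--Nielsen--Baer on $\Sigma$ to get a homeomorphism homotopic to the extended map. I would then isotope that homeomorphism to be the identity on $D$, using the fact that it fixes the class of $c$ and the disk theorem. Step 1 then applies on $S$.
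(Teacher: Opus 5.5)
The paper does not prove this lemma itself; it only quotes \cite[Lemma 6.4]{LWXZ25}. There is therefore no in-paper argument to compare with, and your proposal has to be judged on its own. Its main line is correct and complete, modulo one classical citation.
\begin{itemize}
\item The degree computation via $\partial\colon H_2(S,\partial S)\to H_1(\partial S)$ is right.
\item The covering argument shows that $h_*$ is onto $F$. The Hopfian property of finitely generated free groups then makes $h_*$ an automorphism fixing $c$.
\item Zieschang's theorem, in the form with basepoint on $\partial S$ and $c$ fixed exactly, realizes $h_*$ by a homeomorphism $\varphi$ with $\varphi|_{\partial S}={\rm id}$.
\item Step 1 has a single obstruction, lying in $\pi_2(S)=0$, so $h\simeq\varphi$ rel $\partial S$.
\end{itemize}
Compared with the bare citation, this gives a self-contained reduction whose only nontrivial input is the bordered Dehn--Nielsen--Baer theorem.

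Three side remarks in your write-up are inaccurate, although none of them is used in the main line.

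(i) With $x_0\in\partial S$, the twist about a boundary-parallel curve does not act trivially on $F$. Every based loop crosses the collar twice, so the twist acts as conjugation by $c^{\pm1}$, which is nontrivial because $F$ has trivial center. In fact the mapping class group of $S$ rel $\partial S$ maps isomorphically onto the stabilizer of $c$ in ${\rm Aut}(F)$; you only need surjectivity.

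(ii) Gluing in ${\rm id}_D$ in Step 2 only gives surjectivity on $\pi_1(\Sigma)=F/\langle\langle c\rangle\rangle$. That does not by itself give surjectivity on $F$; the covering argument is what actually does the job.

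(iii) The closed-surface fallback for Step 3 is incomplete as stated. After you isotope the Dehn--Nielsen--Baer homeomorphism of $\Sigma$ to be the identity on $D$, its restriction to $S$ is only known to induce $h_*$ modulo $\langle\langle c\rangle\rangle$. With the unbased theorem, this holds only up to an inner automorphism of $\pi_1(\Sigma)$. So Step 1 does not yet apply. To close the gap you would have to show that the discrepancy, an automorphism of $F$ fixing $c$ and acting innerly on $\pi_1(\Sigma)$, is realized by disk-pushes and boundary twists. That is the Birman exact sequence input, and it is essentially the bordered theorem again. Keep the Zieschang route as your primary argument.
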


\begin{proof}[Proof of Theorem \ref{isotopyclass} (1),(2)]
  By the assumptions, we can choose the parameterization $i_1, i_2$ to satisfy $i_1(b_0) = i_2(b_0) = \S_d(b_0) = b$.
  Let $i$ be $i_1$ or $i_2$. 
  Here let $\MCG^+_{\partial}(D^2)$ be the mapping class group of 2-disk, 
  consisting of the orientation-preserving self-homeomorphisms being identity on the boundary up to isotopy relative to the boundary.
  Since $\MCG^+_{\partial}(D^2)$ is trivial, after an isotopy of the parameterization, 
  we may assume that there is a smoothly embedded 2-disk $D_0$ containing $b_0$ in $\Sigma$,
  satisfying $i|_{D_0} = \S_d|_{D_0}$.
  Because $i$ and $\S_d$ only intersect their common geometric dual $S'_0$ at $b$, 
  there is a 2-disk $D \subset D_0$ containing $b_0$,
  satisfying that $i(\Sigma) \cap p^{-1}(D), \S_d(\Sigma) \cap p^{-1}(D)$ are identical to the same 2-disk $i(D)$.
  Denote $p^{-1}(D)$ as $\nu$.
  It concludes the proof for Theorem \ref{isotopyclass}(1).

  Note that $M \setminus \nu$ is diffeomorphic to $(\Sigma \setminus D) \times S^2$.
  In the following, we fix a diffeomorphism $\phi: M \setminus \nu \rightarrow (\Sigma \setminus D) \times S^2$.
  Let $p_1, p_2$ be the projection of $M \setminus \nu$ to $\Sigma \setminus D$ and $S^2$, respectively.
  It is sufficient to show that $p_j \circ i|_{\Sigma \setminus D}$ is homotopic to $p_j \circ \S_d|_{\Sigma \setminus D}$ 
  relative to $\partial(\Sigma \setminus D)$ for $j=1,2$.
  Consider the composition map
  $$ \Sigma \setminus D \overset{i}{\hookrightarrow} M \setminus \nu \xrightarrow{p_1} \Sigma \setminus D. $$
  It holds that $p_1 \circ i|_{\partial (\Sigma \setminus D)} = p_1 \circ \S_d|_{\partial (\Sigma \setminus D)} = {\rm id}$.
  According to Lemma \ref{minusd}, the composition map is homotopic to a homeomorphism relative to the boundary.
  Thus, after a reparameterization, we can require that $p_1 \circ i|_{\Sigma \setminus D}$ is homotopic to the identity.
  By the construction of $\S_d$, we have $p_1 \circ \S_d|_{\Sigma \setminus D}$ is homotopic to the identity.
  Consider the composition map
  $$ \Sigma \setminus D \overset{i}{\hookrightarrow} M \setminus \nu \xrightarrow{p_2} S^2. $$
  We can pick the diffeomorphism $\phi: M \setminus \nu \rightarrow (\Sigma \setminus D) \times S^2$
  satisfying that $p_2 \circ \S_0(\Sigma \setminus D) = \{ (0,0,1)^T \}$.
  Since $p_2 \circ i$ maps $\partial(\Sigma \setminus D)$ to a single point,
  it induces a quotient map $g : \Sigma \rightarrow S^2$.
  The degree of the quotient map $g$ determines the homotopy class of $p_2 \circ i|_{\Sigma \setminus D}$ relative to the boundary. 
  Recall that $\tau \circ \S_0$ is an embeddings that maps $x \in \Sigma$ to $(x,(0,0,-1)^T) \in M$.
  Now we have
  \begin{equation*}
      \begin{split}
        \deg (g) &= (p_2 \circ i|_{\Sigma \setminus D}) \cap_{S^2} (0,0,-1)^{T}\\
        &= (i|_{\Sigma \setminus D}) \cap_{M \setminus \nu} (\tau \circ \S_0|_{\Sigma \setminus D}) \\
        &= i \cap_M (\tau \circ \S_0) \\
        &= d.
    \end{split}
  \end{equation*}
  Here we use the notation $\cap_{S^2}, \cap_{M \setminus \nu}, \cap_M$ to 
  refer to the algebraic intersection numbers in $S^2, M \setminus \nu, M$ respectively.
  The second equation holds because $p_2^{-1}((0,0,-1)^{T}) = \tau \circ \S_0(\Sigma \setminus D)$.
  The third equation holds because $i$ and $\tau \circ \S_0$ do not intersect in $\nu$.
  The fourth equation holds by the given assumptions.
  By a similar argument on the composition map $p_2 \circ \S_d|_{\Sigma \setminus D}$,
  we obtain that $p_2 \circ i|_{\Sigma \setminus D}$ is homotopic to $p_2 \circ \S_d|_{\Sigma \setminus D}$ relative to the boundary.
  It concludes the proof for Theorem \ref{isotopyclass} (2).
\end{proof}

Without loss of generality, we can assume that $\nu = p^{-1} (D)$ contains the neighbourhood of $\S_0(H_0)$ in Definition \ref{m1m2}.
Let $M_1' = M \setminus \nu, M_2' = M_2 \setminus \nu$.
Note that the manifolds $M_1', M_2'$ constructed here is diffeomorphic to those constructed in \cite[Page 53]{LWXZ25}.
Thus, the remaining proof of Theorem \ref{isotopyclass} is an analogy of the proof of \cite[Theorem 6.1(3)]{LWXZ25}.
We give a sketch of the argument.

\begin{proof}[Proof of Theorem \ref{isotopyclass}(3)]
  After smoothing corners, we may regard $M_1'$ and $M_2'$ as manifolds with boundaries.
  They are also codimension-0 submanifolds of $M$.
  Note that $M_1'$ is diffeomorphic to $(\Sigma \setminus D) \times S^2$, $M_2'$ is homotopy equivalent to $ (\vee_{2g} S^1) \vee S^2$.
  They have isomorphic fundamental groups, denote them as $\pi'$.
  Further more, the inclusion maps of $M_1', M_2'$ into $M$ induce surjective homomorphisms from $\pi'$ to $\pi$.

  Let $i$ be $i_1$ or $i_2$.
  According to Theorem \ref{isotopyclass}(1), we have a neat embedding $i|_{\Sigma \setminus D}: \Sigma \setminus D \rightarrow M_1' $.
  By Theorem \ref{isotopyclass}(2) and the fact that homotopy of 1-handles in 4-manifolds can be perturbed to isotopy,
  we obtain an ambient isotopy $P_t (t \in [0,1])$ of $M$ supported in $M_1'$.
  The isotopy $P_t$ satisfies that $P_0 = {\rm id}$, $P \circ i |_{H_0 \cup H_1} = \S_d |_{H_0 \cup H_1}$, 
  and $P_1 \circ i|_{H_2}$ is a neat embedding of 2-disk in $M_2'$.

  Since $P_1 \circ i|_{H_2}$ and $\S_d|_{H_2}$ may not homotopic relative to the boundary,
  we need a further ambient isotopy extended by isotopy of 1-handles to ensure such two embedded disks are homotopic,
  as in Subsection \ref{idofpi2}.
  Recall that $D_0 = \S_0(H_2)$ is a neatly embedded disk in $M_2'$.
  Let $\pi_2(M_2';\partial D_0)$ be the set of maps of disks $M_2'$ identical to $\S_0|_{H_2}$ on the boundary, up to homotopy relative to the boundary.
  Then we have a bijection
  $$ \lambda_0' := \lambda'(-,\S_0|_{H_2}): \pi_2(M_2';\partial D_0) \rightarrow \ZZ[\pi'], $$
  where $\lambda'$ is the equivariant intersection form for disks in $M_2'$.
  An isotopy of $\S_d(H_1)$ to itself can entend any meridian-vertical barbell diffeomorphism on $M_2'$. 
  The action of a meridian-vertical barbell diffeomorphism $\beta$ on $\lambda_0'([i])$ gives
  $$ \lambda_0'([\beta \circ i]) = \lambda_0'[i] + g_+ - g_- - g_+^{-1} + g_-^{-1}, $$
  where $(g_+,g_-)$ is an adjacent pair in $\pi'$.
  By the same argument as the proof of Lemma \ref{moveh1},
  after acting a self-diffeomorphism $\psi$ of $M$ supported in $M_2'$,
  we may further assume that $\psi \circ P_1 \circ i|_{H_2}$ are homotopic to $\S_d|_{H_2}$ relative to the boundary.

  According to the argument above, 
  we may assume that the restrictions of $i_1,i_2$ to $H_0 \cup H_1$ are identical, 
  and the restrictions of $i_1,i_2$ to $H_2$ are homotopic relative to the boundary in $M_2'$.
  According to \cite[Theorem 1.5]{KT24},
  since $i_1|_{H_2}$ has a boundary geometric dual $G$,
  there is an element
  $$ \alpha = \sum_{j=1}^m \epsilon_j g_j \in \ZZ[\pi' \setminus \{1\}]^\sigma, $$
  where $\epsilon_j \in \{1,-1\}$, $g_j \in \pi' \setminus \{1\}$ for all $1 \leq j \leq m$,
  such that $i_1|_{H_2}$ is isotopic to the action of $\alpha$ on $i_2|_{H_2}$ via finger moves and Norman tricks.
  Namely $i_1|_{H_2}$ is isotopic to the embedding $i_2|_{H_2} + {\rm fm}(\alpha)^G$.
  For the concrete expressions of the action of $\ZZ[\pi' \setminus \{1\}]^\sigma$ on the embedded disks, 
  we refer the readers to \cite[Figure 1.6]{KT24}.
  By Definition \ref{defofdax}, the Dax invariant of embeddings $i_1,i_2$ is 
  $$ \Dax(i_1,i_2) = \sum_{j=1}^m \epsilon_j [g_j] \in \ZZ[\mathcal{C}]^\sigma, $$
  where $[g_j]$ is the conjugacy class of the image of $g_j$ under the surjective homomorphism $\pi' \rightarrow \pi$.
  By assumptions, we have $ \Dax(i_1,i_2)=0$.
  Since $\alpha \in \ZZ[\pi' \setminus \{1\}]^\sigma$, there is an element $\beta = \sum_{j=1}^{n} \delta_j h_j \in \ZZ[\pi' \setminus \{1\}]$,
  where $\delta_j \in \{1,-1\}$, such that $\alpha = \beta + \sigma(\beta)$.
  According to \cite[Figure 13,14]{G21}, 
  $i_2|_{H_2} + {\rm fm}(\alpha)^G$ is isotopic to the embedded disk obtained by tubing a self-referential disk on $i_2$ along $\beta$.

  If we isotope the self-referential tube with respect to $h_j$ across $\nu$, we can change $h_j$ to any element with the same image in $\pi$.
  If we isotope the attaching point of the tube with respect to $h_j$ and $i_2(H_2)$, we may change $h_j$ to any element conjugate to it in $\pi$.
  Thus, after an isotopy moving the attaching points and the self-referential tubes, we may assume $\beta = 0$.
  It deduces $\alpha = 0$.
  Hence $i_1$ is isotopic to $i_2$ in $M_2'$ relative to the boundary.
  It concludes the proof of Theorem \ref{isotopyclass}(3).
\end{proof}

Recall that as Definition \ref{deffd}, 
$\mathcal{F}_d$ is the set consisting of the embedded surfaces in $M$ satisfying the three conditions in Theorem \ref{isotopyclass},
and $\approx$ is the isotopy relation on $\mathcal{F}_d$.
The following corollary of Theorem \ref{isotopyclass} is a restatement of Theorem \ref{thm3}.

\begin{cor}\label{cor4.3}
  There is a bijection
 $$ \Lambda : \mathcal{F}_d/\approx  \ \rightarrow \ \ZZ[\mathcal{C}]^\sigma, \quad [\Sigma] \mapsto \Dax(i,\S_d). $$
 Here $i$ is a pointed parameterization for $\Sigma$ such that $i$ is homotopic to $\S_d$ relative to $b_0$.
\end{cor}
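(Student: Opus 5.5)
We must prove that $\Lambda$ is well defined, injective and surjective.

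\emph{Well-definedness.} Let $\Sigma \in \mathcal{F}_d$. By Theorem \ref{isotopyclass}(1),(2) there is a parameterization $i$ with $i(b_0)=b$ that agrees with $\S_d$ on $p^{-1}(D)$ and is homotopic to $\S_d$ rel boundary on the complement. Gluing these gives $i \in \mathcal{E}_d$, so $\Dax(i,\S_d)\in \ZZ[\mathcal{C}]^\sigma$ is defined by Theorem \ref{thm1}. Now suppose $i'$ is another admissible parameterization of $\Sigma' \approx \Sigma$. The condition defining $\approx$ gives parameterizations that are isotopic rel $b_0$. Any two pointed parameterizations of the same surface that are both homotopic to $\S_d$ differ by a pointed self-diffeomorphism of $\Sigma$ acting trivially on $\pi_1$. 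Such a diffeomorphism is isotopic to the identity rel $b_0$ (Dehn--Nielsen, $g\ge 1$), so all these parameterizations lie in one component of $\mathcal{E}_d$; equivalently, use Lemma \ref{parameter}. Isotopy invariance and additivity in Theorem \ref{thm1} then give $\Dax(i,\S_d)=\Dax(i',\S_d)$.

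\emph{Injectivity.} Suppose $\Lambda[\Sigma_1]=\Lambda[\Sigma_2]$. Additivity gives $\Dax(i_1,i_2)=\Dax(i_1,\S_d)-\Dax(i_2,\S_d)=0$. Since $\Sigma_1,\Sigma_2\in\mathcal{F}_d$ satisfy the hypotheses of Theorem \ref{isotopyclass}, part (3) shows they are isotopic. I still need to check that this isotopy respects the basepoint, so that it gives $\Sigma_1\approx\Sigma_2$. In the proof of Theorem \ref{isotopyclass}(3), every ambient isotopy used is supported in $M_1'=M\setminus\nu$, and $\nu\ni b$. Hence the isotopy fixes $b$, and the parameterizations remain isotopic rel $b_0$.

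\emph{Surjectivity.} This is the main step. Take $x=\sum_j \epsilon_j c_j \in \ZZ[\mathcal{C}]^\sigma$ and write $x=y+\sigma(y)$ with $y=\sum_k \delta_k [h_k]$. If some class satisfies $c=c^{-1}$, it can appear with an odd coefficient, and I must handle that case separately; see the last paragraph.

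Start from $\S_d$ and keep it fixed on $\nu \cup \S_d(H_0\cup H_1)$. Modify the disk $\S_d|_{H_2}$ in $M_2'$ by attaching, for each $k$, a self-referential tube in the sense of \cite[Figures 13, 14]{G21}, along a based loop that lifts $h_k$ to $\pi'$ and with sign $\delta_k$. The result $D'$ is an embedded disk, homotopic to $\S_d|_{H_2}$ rel boundary, that still avoids $\nu$. Gluing $D'$ back gives a surface $\Sigma'$ that meets $S_0'$ only at $b$ and agrees with $\S_d(\Sigma)$ near $S_0'$. Its intersection number with $\S_{-1}$ is unchanged because $\Sigma'$ is homotopic to $\S_d$; therefore $\Sigma'\in\mathcal{F}_d$.

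By \cite{G21} and \cite[Figure 1.6]{KT24}, a self-referential tube along $h$ changes the disk Dax invariant by $h+h^{-1}$ (the finger move with geometric dual). Since $D'$ and $\S_d|_{H_2}$ are already homotopic rel boundary, no $\pi_1(F_1)$-correction $f_\alpha$ is needed in Definition \ref{defofdax}. Hence $\Dax(\Sigma',\S_d)=\mathfrak{p}\bigl(\sum_k\delta_k(h_k+h_k^{-1})\bigr)=y+\sigma(y)=x$.

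The main obstacle is sign and orientation bookkeeping, together with elements of $\ZZ[\mathcal{C}]^\sigma$ that are not of the form $y+\sigma(y)$. I would first check, as in the Remark after the scanning map, that disk Dax values realize all of $\ZZ[\pi'\setminus 1]^\sigma$ by \cite[Theorem 1.1]{KT24}. In particular, for an involution $h=h^{-1}$ a single finger move contributes exactly $h$. Taking a finger move along such an $h$ in place of a self-referential tube then covers the odd coefficients.
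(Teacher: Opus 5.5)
Your proposal is correct and follows the paper's proof. Well-definedness comes from Theorem~\ref{isotopyclass}(1),(2) together with Lemma~\ref{parameter} and isotopy invariance, injectivity from Theorem~\ref{isotopyclass}(3), and surjectivity from attaching self-referential tubes to $\S_d|_{H_2}$ away from $\nu$. That last step is exactly the effect of the paper's self-referential barbell diffeomorphism $\tilde{\beta}$ on $\S_d$. Your version has two mild advantages: it realizes an arbitrary signed sum at once, whereas the paper only shows the generators $[g]+[g^{-1}]$ lie in the image, and it checks membership in $\mathcal{F}_d$ explicitly.

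Two small corrections are needed.

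First, the odd-coefficient case you set aside never occurs. No nontrivial element of $\pi_1(\Sigma)$, or of the free group $\pi'$, is conjugate to its inverse. Hence $\sigma$ acts freely on $\mathcal{C}$, and every element of $\ZZ[\mathcal{C}]^\sigma$ has the form $y+\sigma(y)$. Your final paragraph can therefore be dropped.

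Second, the isotopy produced in Theorem~\ref{isotopyclass}(3) is not entirely supported in $M_1'$, because the self-referential tubes are slid across $\nu$ at the end. Those slides can be made to miss the disk $i(D)\ni b$, so the isotopy is still relative to $b_0$; state that reason instead of the support claim.
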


\begin{proof}
  For any $\Sigma \in \mathcal{F}_d$,
  such parameterization $i$ of $\Sigma$ exists,
  because of the results (1),(2) of Theorem \ref{isotopyclass}.
  The value of $\Dax(i,\S_d)$ is independent of the choice of $i$ because of Lemma \ref{parameter}.
  For the isotopy invariance of the Dax invariants, the map $\Lambda$ is well-defined.
  The injectivity of $\Lambda$ in deduced from the result (3) of Theorem \ref{isotopyclass}.

  Then we review the construction for a family of barbell diffeomorphisms called the self-referential barbell diffeomorphisms following \cite{LWXZ25} and \cite[Construction 5.3]{G19}.
  Recall that $S_0$ is the fiber of $M$ at the fixed point $p \in H_2 \subset \Sigma$.
  We pick a path $\gamma$ in $\Sigma$ with distinct endpoints $p$ and $q \in {\rm Int}(H_2)$,
  requiring that $(0,1) \cap \gamma^{-1}(q) = \{t_0\}$, 
  and $\gamma$ is self-transverse at $q$. 
  Let $c$ be the element of $\mathcal{C}$ represented by the loop $\gamma|_{[t_0,1]}$.
  Lift $\gamma$ to a path $\tilde{\gamma}$ in $M_2$, 
  such that the only self-intersection of $\tilde{\gamma}$ is $\S_0(q)$.
  Let $m$ be the meridian of the interior of $\tilde{\gamma}$ at $\S_0(q)$.
  We shrink $\tilde{\gamma}$ at the endpoint $\S_0(q)$
  to a path $\hat{\gamma}$ that connecting $S_0$ and $m$.
  The defining data $(S_0,m,\hat{\gamma})$ gives a barbell diffeomorphism $\beta_{(S_0,m,\hat{\gamma})}$.
  We call $\beta_{(S_0,m,\hat{\gamma})}$ a self-referential barbell diffeomorphism,
  and call $c$ the associated class of $\beta_{(S_0,m,\hat{\gamma})}$.

  Now we prove the surjectivity of $\Lambda$.
  Recall that when constructing the embedding $\S_d$, we choose $|d|$ points $p_1,\cdots,p_{|d|}$ in $H_2$.
  For any generator $[g]+[g^{-1}]$ of $\ZZ[\mathcal{C}]^\sigma$,
  there is a self-referential barbell diffeomorphism $\beta$ of $M_2$,
  such that $\beta$ is supported away from the $S^2$-fibers of $M$ over $p_1,\cdots,p_{|d|}$,
  and the associated class of $\beta$ is $[g]$.
  We extend $\beta$ by identity, and obtain the barbell diffeomorphism $\tilde{\beta}$ of $M$.
  We have
  \begin{align*}
    \Lambda(\tilde{\beta}\circ \S_d(\Sigma)) &= \Dax(\tilde{\beta} \circ \S_d, \S_d) \\
    &= \mathfrak{p} \circ \Dax_{M_2}(\beta \circ \S_d|_{H_2}, \S_d|_{H_2}) \\
    &= [g] + [g^{-1}].
  \end{align*}
  The first equation holds by the definition of $\Lambda$.
  The second equation holds because of Definition \ref{defofdax} and the fact that $\tilde{\beta}$ is supported on $M_2$.
  The third equation holds because $\beta \circ \S_d|_{H_2}$ is obtained by tubing a self-referential disk
  on $\S_d|_{H_2}$ along a path representing an element conjugate to $g$ in $\pi$.
\end{proof}

\section{The mapping class group of $\Sigma \ltimes S^2$}\label{sec:mcg}

In this section, we will use the Dax invariants to construct the surjective homomorphism $\hat{\Phi} :\MCG(M) \rightarrow \ZZ^\infty$ in Theorem \ref{thm2}.

Let $\Diff_*(M)$ be the group consisting of the self-diffeomorphisms of $M$ fixing the basepoint $b$.
Define $\MCG_*(M) = \pi_0 \Diff_*(M)$.
Let $\aut_*(M)$ be the group consisting of the pointed homotopy equivalences between $M$ and itself, up to homotopy fixing the basepoint.
There is a canonical projection homomorphism $p_0: \MCG_*(M) \rightarrow \aut_*(M)$.
Let $\Diff^+_*(M)$ be the group consisting of the pointed orientation-preserving self-diffeomorphisms of $M$.
Let $\MCG^+_*(M) = \pi_0 \Diff^+_*(M)$,
which is a subgroup of $\MCG_*(M)$ of index 2.

\begin{prop}\label{phi0}
  There is a surjective homomorphism given by
  $$ \Phi_0 : \ker p_0 \rightarrow \ZZ[\mathcal{C}]^\sigma, \quad [f] \mapsto \Dax(\S_0, f\circ\S_0). $$
\end{prop}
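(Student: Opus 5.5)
Three things have to be checked: $\Phi_0$ is well defined, it is a homomorphism, and it is surjective.

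\emph{Well-definedness.} Let $[f]\in\ker p_0$. Then $f$ is pointed homotopic to the identity, so $f\circ\S_0$ is homotopic to $\S_0$ relative to $b_0$. Hence $f\circ\S_0\in\mathcal{E}_0$, and the invariant $\Dax(\S_0,f\circ\S_0)$ of Theorem~\ref{thm1} is defined. If $f$ and $f'$ are pointed isotopic, then $f\circ\S_0$ and $f'\circ\S_0$ are isotopic relative to the basepoint. Isotopy invariance and additivity (Theorem~\ref{thm1}(1),(2)) then give $\Dax(\S_0,f\circ\S_0)=\Dax(\S_0,f'\circ\S_0)$.

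\emph{Homomorphism property.} Take $[f],[g]\in\ker p_0$. By additivity,
\begin{equation*}
\Dax(\S_0,f\circ g\circ\S_0)=\Dax(\S_0,f\circ\S_0)+\Dax(f\circ\S_0,f\circ g\circ\S_0).
\end{equation*}
Apply naturality (Lemma~\ref{naturality}) to the second term; this is legitimate because $\S_0$ and $g\circ\S_0$ both lie in $\mathcal{E}_0$. It gives $\Dax(f\circ\S_0,f\circ g\circ\S_0)=f_*\Dax(\S_0,g\circ\S_0)$. Since $f$ is pointed homotopic to the identity, $f_\#=\mathrm{id}$ on $\pi$. So $f_*$ is the identity on $\ZZ[\mathcal{C}]^\sigma$, and $\Phi_0(fg)=\Phi_0(f)+\Phi_0(g)$.

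\emph{Surjectivity.} I will reuse the construction from the proof of Corollary~\ref{cor4.3}. Fix a generator $[g]+[g^{-1}]$ of $\ZZ[\mathcal{C}]^\sigma$ and take a self-referential barbell diffeomorphism $\beta$ of $M_2$ with associated class $[g]$. Extend it by the identity to $\tilde\beta$ on $M$. Since $\tilde\beta$ is supported in $M_2$, it fixes $b$. The computation in that corollary, with $d=0$, gives $\Dax(\tilde\beta\circ\S_0,\S_0)=[g]+[g^{-1}]$. By additivity and the $\sigma$-symmetry of the target, this equals $\Dax(\S_0,\tilde\beta\circ\S_0)$ up to sign. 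Since $\Phi_0$ is a homomorphism, inverses then produce every generator with either sign, and hence everything.

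The main obstacle is showing that $[\tilde\beta]\in\ker p_0$, i.e.\ that $\tilde\beta$ is pointed homotopic to the identity. My plan is to use the defining description of a barbell diffeomorphism: it is the arc-pushing image of a loop of arcs swinging around the sphere $S_0$. Homotopically, such a loop is null once we pass to the space of maps $I\to M$ relative to the endpoints, since it only differs from the constant loop by $\pi_2$ classes that cancel. This should make $\tilde\beta$ homotopic to the identity relative to a neighbourhood of the complement of the barbell, in particular relative to $b$. An alternative route uses the standard fact that $\beta$ is homotopic to the identity on the standard barbell $\mathcal B$ relative to $\partial\mathcal B$: it is the identity on homology and $\mathcal B\simeq S^2\vee S^2$, and the obstruction-theoretic classes in $\pi_3$ and $\pi_4$ of the cuffs vanish by the explicit model. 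Implanting this homotopy relative to the boundary gives the claim.

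If neither argument goes through directly, I would at least verify that $\tilde\beta$ induces the identity on $\pi_1$ and on $\pi_2(M)\cong\ZZ$, because it fixes $S_0$ up to homotopy. I would then use a Proposition~\ref{bijmap}-style obstruction analysis on the cells of $M$ to conclude that $\tilde\beta$ is pointed homotopic to the identity.
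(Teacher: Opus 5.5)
Your skeleton agrees with the paper's. The homomorphism property comes from additivity plus naturality (with $f_*=\mathrm{id}$, since $f_\#=\mathrm{id}$ on $\pi$). Surjectivity comes from self-referential barbell diffeomorphisms with $\Dax=\pm([g]+[g^{-1}])$. Your well-definedness check and the sign bookkeeping via inverses are fine. The paper never proves $[\tilde\beta]\in\ker p_0$: it uses it silently here and states ``Note that $[\beta]\in\ker p_0$'' in Section~\ref{sec:mcg}. So the only real departure is your attempted justification of that step, and it does not hold up.

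\textbf{Your first two routes.} Both aim to show that $\tilde\beta$ is homotopic to the identity \emph{relative to the complement of the barbell}; the second does so by implanting a homotopy of the standard $\beta$ to $\mathrm{id}$ rel $\partial\mathcal B$. Both statements are false.
\begin{itemize}
\item By Subsection~\ref{bbldiff}, $\beta(D_0)$ is $D_0$ tubed to $S^2\times\{*\}$ of the second cuff. That sphere is nonzero in $\pi_2(\mathcal B)\cong\ZZ^2$, so $\beta(D_0)$ is not homotopic to $D_0$ rel $\partial D_0$, as a homotopy rel $\partial\mathcal B$ would force.
\item For the implanted self-referential $\tilde\beta$, the same tubing applies to the other cuff; this is what produces the inverse terms in Lemma~\ref{classchange}. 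It sends the cocore disk of the $m$-cuff, whose boundary lies on $\partial\mathcal B$, to itself tubed with $S_0$, and $[S_0]$ generates $\pi_2(M)$.
\item More generally, Lemma~\ref{classchange} shows that barbell maps change $\lambda_0$ of disks with boundary on $\partial M_2$. That is impossible for maps homotopic to the identity rel the complement of the barbell.
\end{itemize}
The supporting reasons are also wrong. The swinging loop is not null in paths rel endpoints, since its trace under $\F$ is the swept cuff sphere. Acting trivially on $H_*(\mathcal B)$ says nothing rel boundary; the relevant invariant there is the variation $[D]\mapsto[\beta(D)\cup\bar D]\in\pi_2$, which is $\pm$ the other cuff sphere. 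Consequently any homotopy $\tilde\beta\simeq\mathrm{id}$ must move $M\setminus\mathcal B$.

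\textbf{Your third route.} As stated it is not a proof. For a self-map of the closed 4-manifold $M$, acting as the identity on $\pi_1$ and $\pi_2$ is not sufficient, and Proposition~\ref{bijmap} only treats the 2-dimensional source $\Sigma$. An obstruction-theoretic argument has to kill three things in turn:
\begin{itemize}
\item the full difference class in $H^2(M;\pi_2M)\cong H^2(M;\ZZ)$;
\item the obstruction in $H^3(M;\pi_3M)\cong H^3(M;\ZZ)$;
\item the obstruction in $H^4(M;\pi_4M)\cong\ZZ/2$.
\end{itemize}
The primary class does vanish here. It is computed from $[m]\in\pi_2(M)$ and the numbers $F\cdot m$ for closed surfaces $F$, and both are zero because $m$ bounds a small 3-ball. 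The higher obstructions, however, need genuine input about the self-referential barbell.

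As written, your surjectivity argument rests on an unproved claim. You should either invoke homotopy-triviality of self-referential barbell diffeomorphisms as a known fact, as the paper implicitly does, or give an argument that is allowed to move the complement of the barbell.
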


\begin{proof}
  We first show that $\Phi_0$ is a homomorphism. Given $[f], [g] \in \ker p_0$, it holds the following
  \begin{align*}
    \Phi_0([f \circ g]) &= \Dax(\S_0, f \circ g \circ \S_0) \\
    &= \Dax(f \circ \S_0, f \circ g \circ \S_0) + \Dax(\S_0, f \circ \S_0) \\
    &= \Dax(\S_0, g \circ \S_0) + \Dax(\S_0, f \circ \S_0) \\
    &= \Phi_0([g]) + \Phi_0([f]). 
  \end{align*}

  For any $c \in \mathcal{C}$, 
  there is a self-referential barbell diffeomorphism $\beta_{(S_0,m,\hat{\gamma})}$ whose associated loop is $c$.
  See the proof of Proposition \ref{cor4.3} for the construction of self-referential barbell diffeomorphisms.
  According to \cite[Construction 5.3]{G19} and \cite[Definition 2.1]{G21},
  the action of $\beta_{(S_0,m,\hat{\gamma})}$ on $D_0$ 
  is equivalent to connecting $m$ and $D_0$ by a tube along $\hat{\gamma}$,
  as well as attaching a self-referential disk to $D_0$ along $\hat{\gamma}$. 
  By Definition \ref{defofdax} and \cite[Theorem 4.9(ii)]{G21},
  we have 
  $$\Dax(\S_0, \beta_{(S_0,m,\hat{\gamma})} \circ \S_0) = \mathfrak{p} \circ \Dax_{M_2}(D_0,\beta_{(S_0,m,\hat{\gamma})}(D_0)) = c + c^{-1}.$$ 
  Thus, we obtain $\Phi_0([\beta]) = c + c^{-1}.$
  Since $\ZZ[\mathcal{C}]^{\sigma}$ is an abelian group generated by elements of the form $c + c^{-1}$,
  $\Phi_0$ is a surjective homomorphism.
\end{proof}

Recall that homology group $H_2(M)$ is isomorphic to $\ZZ^2$.
We can pick a basis $\{ (\S_0)_*([\Sigma]), i_*([S^2]) \}$ for $H_2(M)$,
where $[\Sigma]$ and $[S^2]$ are fundamental classes of $\Sigma$ and $S^2$ respectively, 
and $i: S^2 \hookrightarrow M$ is the inclusion of fiber at basepoint.
For convenience, we denote the basis $\{ (\S_0)_*([\Sigma]), i_*([S^2]) \}$ as $\{x,y\}$.
Under such choice of basis, the intersection form on $H_2(M)$ can be describe by the matrix
\begin{equation*}
\begin{pmatrix}
1 & 1 \\
1 & 0
\end{pmatrix}.
\end{equation*}
The following lemma describes the action of self-diffeomorphisms of $M$ on the homology group $H_2(M)$.

\begin{lem}\label{diff+}
  Let $f: M \rightarrow M$ be a self-diffeomorphism of $M$.
  \begin{itemize}
    \item If $f$ preserves the orientation, it holds that $f_*(x) = \pm x$;
    \item If $f$ reverses the orientation, it holds that $f_*(x) = \pm (x-y)$.
  \end{itemize}
\end{lem}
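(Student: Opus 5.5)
The plan is to work entirely with the intersection form on $H_2(M)\cong\ZZ^2$ in the basis $\{x,y\}$. Since $f$ is a diffeomorphism, $f_*$ preserves the form $Q$ up to sign: $Q(f_*a,f_*b)=\pm Q(a,b)$, with sign $+$ if $f$ preserves orientation and $-$ if it reverses it. So it suffices to classify the lattice automorphisms $A$ of $\ZZ^2$ satisfying $A^TQA=\pm Q$, where $Q=\begin{pmatrix}1&1\\1&0\end{pmatrix}$, and to read off the image of $x$.

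Write $v=f_*(x)=ax+by$. Then $Q(v,v)=a^2+2ab$. In the orientation-preserving case we need $a^2+2ab=Q(x,x)=1$. Next I would use $y$. It is the fiber class, a primitive class with $Q(y,y)=0$, and $f_*(y)=cx+ey$ must satisfy $c^2+2ce=0$. The isotropic primitive classes are $\pm y$ and $\pm(2x-y)$.

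The main obstacle is ruling out $f_*(y)=\pm(2x-y)$ without doing explicit lattice bookkeeping. I would try a topological argument first. The fiber class $y$ is the image of the generator of $\pi_2(M)\cong\ZZ$, via the Hurewicz map composed with the fiber inclusion. Any homotopy equivalence acts on $\pi_2(M)\cong\ZZ$ by $\pm1$. The Hurewicz image is natural, so $f_*(y)=\pm y$, which excludes $2x-y$.

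With $f_*(y)=\epsilon y$, the orientation-preserving case uses $Q(v,y)=Q(x,y)=1$. This gives $a\epsilon=1$, so $a=\epsilon$. Then $Q(v,v)=1+2\epsilon b=1$, which forces $b=0$ and $v=\pm x$.

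In the orientation-reversing case, $Q(f_*x,f_*y)=-1$ gives $a\epsilon=-1$, so $a=-\epsilon$. Then $Q(v,v)=-1$ gives $1-2\epsilon b=-1$, so $b=\epsilon$ and $v=-\epsilon x+\epsilon y=\pm(x-y)$. As a consistency check, $Q(x-y,x-y)=1-2+0=-1$, as required. This completes both cases of the lemma.
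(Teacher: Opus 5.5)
Your proof is correct and is essentially the paper's: the paper also first shows that $f_*(y)$ has no $x$-component. It does this by identifying that coefficient with the degree of $S^2 \to M \xrightarrow{f} M \to \Sigma$, which vanishes since $g \geq 1$, where you use that the spherical classes form $\ZZ y$ via Hurewicz. It then gets $a=\pm1$ from $\det f_* = \pm 1$ rather than from $Q(f_*x,f_*y)=\pm Q(x,y)$, and finishes with the same self-intersection computation $a^2+2ab=\pm1$ (so your classification of primitive isotropic classes is not needed once the $\pi_2$ argument gives $f_*(y)=\pm y$).
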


\begin{proof}
  Suppose that
  \begin{equation*}
    f_* 
    \begin{pmatrix}
      x \\
      y
    \end{pmatrix}
    =
    \begin{pmatrix}
      a & b \\
      c & d
    \end{pmatrix}
    \begin{pmatrix}
      x \\
      y
    \end{pmatrix},
  \end{equation*}
  for some integers $a,b,c,d$.
  Since $f$ is a self-diffeomorphism, we have $ad-bc = \pm 1$.
  Note that $c$ can be identified to the degree of the composition $S^2 \xrightarrow{i} M \xrightarrow{f} M \xrightarrow{\rm pr} \Sigma$, which can only be zero.
  Hence $ad = \pm 1$.

  If $f$ preserve the orientation, we have
  $ 1 = x \cdot x = f_*(x) \cdot f_*(x) = (ax+by) \cdot (ax+by) = 1 + 2ab. $
  It deduce that $b = 0$.
  If $f$ reverse the orientation, we have
  $ -1 = - x \cdot x = f_*(x) \cdot f_*(x) = 1+2ab.$
  It implies that $ab = -1$, hence $b = -a$.
\end{proof}

Given $[f] \in \MCG^+_*(M)$ and a pointed embedding $i : \Sigma \hookrightarrow M$ homotopic to $\S_d$ relative to the basepoint.
We need to reparameterize the embedding $f \circ i: \Sigma \hookrightarrow M$ to construct a new embedding homotopic to $i$.
The composition 
$$\Sigma \xrightarrow{i} M \xrightarrow{f} M \xrightarrow{\rm pr} \Sigma$$
induces an automorphism on $\pi$.
Remind that there is a pointed self-diffeomorphism $f_{\dagger}$ of $\Sigma$ inducing the same automorphism on $\pi$. 
Define the pointed embedding $i^f: \Sigma \hookrightarrow M$ to be the composition
$$ \Sigma \xrightarrow{f_\dagger} \Sigma \xrightarrow{i} M \xrightarrow{f} M. $$

\begin{lem}
  The pointed map $i^f$ is homotopic to $\S_d$ relative to the basepoint.
\end{lem}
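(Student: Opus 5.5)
We prove the lemma with Proposition \ref{bijmap}: a pointed homotopy class of maps $\Sigma \to M$ is determined by its induced homomorphism on $\pi_1$ together with an element of $\pi_2(M) \cong \ZZ$. We will show that $i^f$ and $\S_d$ agree on both invariants. For the second invariant we use the description from the proof of Lemma \ref{fiberhomotopy}(3) and of Theorem \ref{isotopyclass}(2): for maps with the same $\pi_1$-data, the $\pi_2$-component is detected by homological data. Concretely, we compare the degree of $\Sigma \to M \xrightarrow{\rm pr}\Sigma$ and the class in $H_2(M)$.

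\emph{Step 1 ($\pi_1$).} The projection ${\rm pr}: M \to \Sigma$ induces an isomorphism on $\pi_1$, so ${\rm pr}_*\circ(i^f)_* = ({\rm pr}\circ f\circ i)_* \circ (f_\dagger)_*$. We choose $f_\dagger$ so that $({\rm pr}\circ f\circ i)_*\circ (f_\dagger)_* = \mathrm{id}$. This means taking $f_\dagger$ as the Dehn--Nielsen realization of the inverse of $({\rm pr}\circ f \circ i)_*$; if $f_\dagger$ was instead defined to realize that automorphism itself, we replace it by its inverse. The automorphism is realized by a pointed diffeomorphism because it is induced by a map of degree $\pm1$. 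Indeed, ${\rm pr}\circ f\circ i$ has degree $\pm1$, since by Lemma \ref{diff+} we have $f_*(x)=\pm x$, and $i_*[\Sigma]=x+dy$ with ${\rm pr}_*(y)=0$. Hence $(i^f)_*=(\S_d)_*$ on $\pi_1$.

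\emph{Step 2 ($\pi_2$ component).} By Proposition \ref{bijmap}, it remains to show that the $\pi_2$-difference $\phi_2(i^f,\S_d)\in\pi_2(M)\cong\ZZ$ vanishes. As in the proof of Lemma \ref{fiberhomotopy}(3), after homotoping $i^f$ to agree with $\S_d$ on ${\rm sk}^1$, this difference is a multiple of the fiber class. It changes the homology class by that multiple of $y$, so it is detected by $(i^f)_*[\Sigma]-(\S_d)_*[\Sigma]\in H_2(M)$. Thus it suffices to show $(i^f)_*[\Sigma]=x+dy=(\S_d)_*[\Sigma]$.

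We have $(i^f)_*[\Sigma]=\deg(f_\dagger)\, f_*(x+dy)$. Since $[f]\in\MCG^+_*(M)$, Lemma \ref{diff+} gives $f_*(x)=\epsilon x$ with $\epsilon=\pm1$. The fiber class satisfies $f_*(y)=\epsilon' y$, because $y$ is the unique primitive class up to sign with $y\cdot y=0$ and $x\cdot y=\pm1$. Orientation preservation then forces $\epsilon'=\epsilon$: from $x\cdot y=1$ we get $f_*x\cdot f_*y=\epsilon\epsilon'=1$. Moreover $\deg f_\dagger$ equals the degree of ${\rm pr}\circ f\circ i$, which is $\epsilon$, because ${\rm pr}_*f_*(x+dy)=\epsilon\,{\rm pr}_*x$. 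Therefore $(i^f)_*[\Sigma]=\epsilon\cdot\epsilon(x+dy)=x+dy$.

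\emph{Main obstacle.} The delicate point is the sign bookkeeping in Step 2. One must make sure that $f_*y=\epsilon y$ with the same sign as $f_*x$, and that $\deg f_\dagger$ matches it, so that the signs cancel instead of producing $-(x+dy)$. A second point needing care is the claim that, once the $\pi_1$-data agree, the $\pi_2$-component of Proposition \ref{bijmap} is detected by $H_2$. We expect to justify this by the intersection-with-$\S$ argument already used in Lemma \ref{fiberhomotopy}(3): gluing a sphere representing $k\in\pi_2(M)$ changes the algebraic intersection with $\S$ (equivalently, the $H_2$-class) by $k$.
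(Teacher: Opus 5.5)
Your route is the paper's. You use Proposition~\ref{bijmap} to reduce the claim to two things: matching the $\pi_1$-data, which the reparametrization $f_\dagger$ arranges, and showing that the $\pi_2$-difference $\phi_2$ vanishes. You detect $\phi_2$ homologically and compute the homology class with Lemma~\ref{diff+}. The paper detects $\phi_2$ by intersection number with $\S_0$, you by injectivity of the Hurewicz map on $\pi_2(M)$; these are equivalent because $x\cdot y=1$.

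Your bookkeeping is more careful than the printed proof in two respects:
\begin{itemize}
\item The paper writes $(i^f)_*[\Sigma]=x=(\S_0)_*[\Sigma]$, which is only correct for $d=0$, and the lemma is later used with $d=-1$. You get $x+dy$ and track $\deg f_\dagger=\epsilon$ explicitly.
\item You are right that $f_\dagger$ must realize the inverse of $({\rm pr}\circ f\circ i)_*$. This matches the paper's later formula $\S_0^f=f\circ\S_0\circ f_\dagger^{-1}$, not its literal definition of $i^f$.
\end{itemize}

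One justification is false, however. It is not true that $y$ is, up to sign, the only primitive class with square $0$ and pairing $\pm1$ with $x$. The class $2x-y$ satisfies $(2x-y)\cdot(2x-y)=4-4+0=0$ and $x\cdot(2x-y)=1$. Moreover, $x\mapsto x$, $y\mapsto 2x-y$ is an isometry of the intersection form, so the form alone cannot force $f_*(y)=\pm y$.

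The correct reason is topological, as in the proof of Lemma~\ref{diff+}. Writing $f_*(y)=cx+d'y$, we have ${\rm pr}_*f_*(y)=c\,[\Sigma]$, so $c$ is the degree of the composite of the fiber inclusion $S^2\to M$ with ${\rm pr}\circ f$. That degree is zero, because $\Sigma$ has contractible universal cover ($g\ge1$). Equivalently, $f_*$ preserves the Hurewicz image $\ZZ y\subset H_2(M)$.

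You also use this fact silently in Step 1: deducing $\deg({\rm pr}\circ f\circ i)=\pm1$ from $i_*[\Sigma]=x+dy$ and $f_*(x)=\pm x$ requires ${\rm pr}_*f_*(y)=0$. With this substitution, your sign argument ($\epsilon'=\epsilon$ from $x\cdot y=1$, $\deg f_\dagger=\epsilon$, hence $(i^f)_*[\Sigma]=x+dy$) goes through unchanged.
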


\begin{proof}
  We use the identification of $[\Sigma,M]$ constructed in Proposition \ref{bijmap}.
  Due to the reparameterization $f_\dagger$, the induced automorphisms of $i_f$ and $\S_d$ on $\pi$ are identical.
  Thus the element $\phi_2([i^f],[\S_d])$ of $\pi_2(M)$ is well-defined. 
  We refer the readers to the proof of Proposition \ref{bijmap} for the concrete definition for $\phi_2$. 
  Under the isomorphism $\pi_2(M) \cong \ZZ$, 
  $\phi_2([i^f],[\S_d])$ can be identified to the difference $(i^f \cap \S_0) - (\S_d \cap \S_0)$ of the algebraic intersection number in $M$. 
  The difference is always zero because the following equations holds in $H_2(M)$ 
  $$ (i^f)_*([\Sigma]) = f_* \circ (i \circ f_\dagger)_*([\Sigma]) = x = (\S_0)_*([\Sigma]).$$ 
  The second equation holds by Lemma \ref{diff+} and the construction of $f_\dagger$.
\end{proof}

Denote the automorphism group of $\pi$ as $\aut(\pi)$. 
The group $\aut(\pi) \times \ZZ/2$ act on $\mathcal{C}$, 
where the component $\ZZ/2$ stands for taking inverse. 
Denote the set consisting of the orbits as $A$, which is infinite due to \cite[Lemma 5.10]{LWXZ25}.
There is a canonical projection $\mathscr{P} : \ZZ[\mathcal{C}] \rightarrow \ZZ[A]$.

\begin{prop}\label{Phi1}
  There is a surjective homomorphism given by
  $$\Phi_1: \MCG_*^+(M) \rightarrow \ZZ[A], \quad [f] \mapsto \frac{1}{2} \cdot \mathscr{P} \circ \Dax(\S_0,\S_0^f).$$
\end{prop}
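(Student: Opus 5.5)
We will prove three things in turn: that $\Phi_1$ is well defined (the factor $\frac12$ makes sense and the value depends only on the mapping class), that it is a homomorphism, and that it is surjective.

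\emph{Well-definedness.} Fix $[f]\in\MCG^+_*(M)$. By the preceding lemma, $\S_0^f$ is a pointed embedding homotopic to $\S_0$ relative to the basepoint, so $\Dax(\S_0,\S_0^f)\in\ZZ[\mathcal{C}]^\sigma$ is defined by Theorem~\ref{thm1}. If $f$ is isotopic to $f'$ through pointed diffeomorphisms, then $f_\dagger$ and $f'_\dagger$ induce the same automorphism of $\pi$. By Dehn--Nielsen they are therefore isotopic, so $\S_0^f$ and $\S_0^{f'}$ are isotopic, and isotopy invariance gives the same value. The choice of $f_\dagger$ only matters up to isotopy for the same reason.

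For integrality of $\frac12$, note that every element of $\ZZ[\mathcal{C}]^\sigma$ is an integer combination of elements $c+c^{-1}$ with $c\neq c^{-1}$, together with elements $c$ satisfying $c=c^{-1}$. Under $\mathscr{P}$, $c$ and $c^{-1}$ lie in the same $\aut(\pi)\times\ZZ/2$-orbit, so $\mathscr{P}(c+c^{-1})=2[c]$. For the self-inverse classes we observe that $\pi$ is a surface group, hence torsion-free, so $g$ conjugate to $g^{-1}$ forces $g=1$ (for $g\neq 1$, the centralizer is cyclic and a conjugate-inverting element would generate torsion or a Klein-bottle relation, which is impossible in an orientable surface group). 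Hence $\mathscr{P}(\ZZ[\mathcal{C}]^\sigma)\subset 2\ZZ[A]$, and $\frac12$ is legitimate.

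\emph{Homomorphism.} For $[f],[g]\in\MCG^+_*(M)$ we first check that $\S_0^{f\circ g}$ is isotopic to $(\S_0^g)^f$. Both equal $f\circ g\circ\S_0\circ(\text{a diffeomorphism of }\Sigma)$ whose induced automorphism on $\pi$ is determined by the same projection condition; by Dehn--Nielsen the reparametrizations agree up to isotopy. By additivity,
\[
\Dax(\S_0,\S_0^{fg})=\Dax(\S_0,\S_0^f)+\Dax(\S_0^f,(\S_0^g)^f).
\]
The second term equals $\Dax(f\circ\S_0\circ f_\dagger,\, f\circ\S_0^g\circ f_\dagger)$. Lemma~\ref{naturality} turns it into $f_*\Dax(\S_0\circ f_\dagger,\S_0^g\circ f_\dagger)$. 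A reparametrization by $f_\dagger$ acts on the surface side, and, as in the proof of Lemma~\ref{naturality} via $F$ with $F\circ\S_0=\S_0\circ f_\dagger$, it changes the Dax value by an automorphism of $\pi$. So the second term is $\phi_*\Dax(\S_0,\S_0^g)$ for some automorphism $\phi_*$ induced from $\aut(\pi)$. Since $\mathscr{P}$ kills the action of $\aut(\pi)$, applying $\frac12\mathscr{P}$ gives additivity of $\Phi_1$. This bookkeeping, namely tracking the reparametrization through naturality and making sure only an $\aut(\pi)$-twist appears, is the main obstacle. I would handle it by conjugating with the bundle map $F$ from Lemma~\ref{naturality} and reducing to the case where $f$ acts trivially on $\pi$, where $f_\dagger=\mathrm{id}$.

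\emph{Surjectivity.} For each orbit $a\in A$, pick $c\in\mathcal{C}$ representing it and the self-referential barbell diffeomorphism $\beta$ with associated class $c$ from the proof of Corollary~\ref{cor4.3}. The map $\beta$ is orientation preserving, pointed, and homotopic to the identity, so $\beta_\dagger=\mathrm{id}$ and $\S_0^\beta=\beta\circ\S_0$. As computed in Proposition~\ref{phi0}, $\Dax(\S_0,\beta\circ\S_0)=c+c^{-1}$. Hence $\Phi_1([\beta])=\frac12\cdot 2a=a$, and since the orbits $a$ generate $\ZZ[A]$, $\Phi_1$ is surjective.
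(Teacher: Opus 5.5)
Your proposal is correct and follows the paper's proof: additivity of $\Dax$ plus Lemma~\ref{naturality} gives the homomorphism property, with $\mathscr{P}$ absorbing the $\aut(\pi)$-twist $f_*$, and the self-referential barbells with $\Dax(\S_0,\beta\circ\S_0)=c+c^{-1}$ give surjectivity. Your justification of the factor $\frac{1}{2}$ (no nontrivial conjugacy class of an orientable surface group is self-inverse) fills in a point the paper only asserts; the reparametrization by $f_\dagger$ is more cleanly removed first via Lemma~\ref{parameter}, since $\Dax$ depends only on the image surfaces, and only then should Lemma~\ref{naturality} be applied to $\S_0$ and $\S_0^g$ (as stated, that lemma needs embeddings homotopic to $\S_0$, which $\S_0\circ f_\dagger$ is not).
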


\begin{proof}
  Since the codomain of the Dax invariant is $\ZZ[\mathcal{C}]^\sigma$, 
  dividing by two in the definition of $\Phi_1$ is well-defined.

  We pick embeddings $i_1,i_2$ homotopic to $\S_0$ and a mapping class $[f]$ in $\MCG_*^+(M)$. 
  We denote the unparameterized embedded surfaces $i_j(\Sigma)$ as $\Sigma_j$ for $j = 1,2$.
  By Lemma \ref{parameter} and Lemma \ref{naturality}, we have
  \begin{equation*}
    \begin{split}
      \Dax(i_1^f,i_2^f) &= \Dax(f(\Sigma_1),f(\Sigma_2)) \\
      &= f_* \Dax(\Sigma_1,\Sigma_2)\\
      &= f_* \Dax(i_1,i_2).
    \end{split}
  \end{equation*}
  Let $[f],[g]$ be elements in $\MCG_*^+(M)$. 
  Note that we may pick $(f \circ g)_\dagger$ to be $g_\dagger \circ f_\dagger$.
  It deduce that $\S_0^{f \circ g} = (\S_0^g)^f$.
  Then $\Phi_1$ is a homomorphism, because we have
  \begin{equation*}
    \begin{split}
      \Phi_1([f \circ g]) &= \frac{1}{2} \cdot \mathscr{P} \circ \Dax(\S_0,\S_0^{f \circ g}) \\
      &= \frac{1}{2} \cdot \mathscr{P} (\Dax(\S_0,\S_0^f) + \Dax(\S_0^f, \S_0^{f \circ g})) \\
      &= \frac{1}{2} \cdot \mathscr{P} (\Dax(\S_0,\S_0^f) + \Dax(\S_0^f, (\S_0^g)^f)) \\
      &= \frac{1}{2} \cdot \mathscr{P} (\Dax(\S_0,\S_0^f) + f_* \Dax(\S_0,\S_0^g)) \\
      &= \Phi_1([f]) + \Phi_1([g]).
    \end{split}
  \end{equation*}

  For any $a \in A$, we have $c \in \mathcal{C}$ such that $\mathscr{P}(c) = a$.
  According to Proposition \ref{phi0}, there is a class $[f] \in \ker p_0$ such that $\Dax(\S_0,f \circ \S_0) = c + c^{-1}$.
  It deduce that $\Phi_1([f]) = a$. Therefore, $\Phi_1$ is surjective.
  It follows that our results hold.
\end{proof}

Note that the map $\Diff^+(M) \rightarrow M$ given by evaluating at basepoint is a fibration,
whose homotopy fiber is $\Diff^+_*(M)$.
Thus there is a long exact sequence of homotopy groups
$$ \cdots \rightarrow \pi \xrightarrow{\partial} \MCG^+_*(M) \rightarrow \MCG^+(M) \rightarrow 0. $$

\begin{prop}
  The restriction of $\Phi_2$ on ${\rm Im}(\partial)$ is trivial.
  Hence the homomorphism $\Phi_1$ induces a surjective homomorphism
  $$\Phi_2: \MCG^+(M) \cong \MCG^+_*(M) / {\rm Im}(\partial) \rightarrow \ZZ[A]. $$
\end{prop}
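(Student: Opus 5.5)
The plan is to show $\Phi_1(\partial(\gamma)) = 0$ for every $\gamma \in \pi$. Surjectivity of the induced map $\Phi_2$ then follows immediately from the surjectivity of $\Phi_1$ in Proposition \ref{Phi1}, since $\MCG^+_*(M) \to \MCG^+(M)$ is surjective. (The statement writes ``restriction of $\Phi_2$''; I read this as $\Phi_1$.)

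\textbf{Step 1: a representative of $\partial(\gamma)$.} Represent $\gamma$ by a loop $\gamma_t$ based at $b$. The point-pushing diffeomorphism $f_\gamma = \partial(\gamma)$ is the time-one map of an ambient isotopy $\Gamma_t$ of $M$, with $\Gamma_0 = {\rm id}$, that drags the basepoint along $\gamma_t$. In particular $f_\gamma$ is isotopic to the identity through unpointed diffeomorphisms. As a pointed map it is homotopic to the identity up to conjugation by $\gamma$.

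\textbf{Step 2: compare $\S_0$ and $\S_0^{f_\gamma}$.} Choose $\Gamma_t$ to preserve $\S_0(\Sigma)$ setwise, by pushing the point $b_0$ along the projected loop in $\Sigma$ and lifting this to a bundle isotopy of $M$ that fixes the section $\S_0$. Since $\S_0$ is the section of the $\underline{\RR}$ summand of $\xi \oplus \underline{\RR}$, this lift is possible. Then $f_\gamma \circ \S_0 = \S_0 \circ h$, where $h$ is the point-pushing diffeomorphism of $\Sigma$. The reparametrization $(f_\gamma)_\dagger$ is the pointed diffeomorphism of $\Sigma$ inducing the same automorphism of $\pi$, namely conjugation by $\gamma$. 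So $(f_\gamma)_\dagger$ can be taken to be $h^{-1}$ up to pointed isotopy, and $\S_0^{f_\gamma} = f_\gamma \circ \S_0 \circ h^{-1} = \S_0$ up to pointed isotopy. Hence $\Dax(\S_0,\S_0^{f_\gamma}) = 0$ by Theorem \ref{thm1}(1), and $\Phi_1([f_\gamma]) = 0$.

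\textbf{Alternative route for Step 2.} If the bundle lift is awkward, use the fact that $\S_0^{f_\gamma}(\Sigma) = f_\gamma(\S_0(\Sigma))$ is isotopic as an unparameterized surface to $\S_0(\Sigma)$ via $\Gamma_t$. Choose parametrizations homotopic to $\S_0$ and invoke Lemma \ref{parameter}. This lemma identifies $\pi_0$ of pointed embeddings with isotopy classes of unparameterized surfaces that admit parametrizations homotopic to $\S_0$, so the Dax invariant depends only on the unparameterized isotopy class.

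\textbf{Main obstacle.} The point needing care is that the basepoint moves during $\Gamma_t$. One must check that the pointed isotopy class of $\S_0^{f_\gamma}$ is unaffected, that is, that the residual reparametrization is exactly absorbed by $(f_\gamma)_\dagger$ and that no nontrivial $\pi_1(F_1)$-type twist (the $\mathcal R$-component) appears. I would handle this with the bundle-lift construction, which keeps $\S_0(\Sigma)$ invariant throughout. In that case the only remaining check is the Dehn–Nielsen uniqueness of $f_\dagger$ up to pointed isotopy, which the paper already uses in Lemma \ref{naturality}.
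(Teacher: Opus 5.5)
Your proposal is correct and is essentially the paper's proof. The paper represents $\partial[\gamma]$ by $f=(g|_{\Sigma\setminus D}\times{\rm id}_{S^2})\cup{\rm id}_{D\times S^2}$, where $g$ is the point-push along a loop $\gamma_0$ avoiding $D$; this is precisely your bundle lift of the point-push preserving the section $\S_0$, after which $f_\dagger=g$ gives $\S_0^f=f\circ\S_0\circ g^{-1}=\S_0$ and hence $\Phi_1(\partial[\gamma])=0$. Your wobble between ``$f_\dagger$ induces the same automorphism'' and ``$f_\dagger=h^{-1}$'' mirrors the paper's own inconsistency between $i^f=f\circ i\circ f_\dagger$ and $\S_0^f=f\circ\S_0\circ f_\dagger^{-1}$, and the formula you actually use, $\S_0^{f_\gamma}=f_\gamma\circ\S_0\circ h^{-1}=\S_0$, is the right one.
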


\begin{proof}
  We use the model of $M$ constructed in (\ref{construct}) and (\ref{model}).
  Given $[\gamma] \in \pi$, assume that $\gamma$ is a based loop factor through $\Sigma$. 
  Namely, there is a based loop $\gamma_0$ in $\Sigma$ satisfying $\gamma = \S_0 \circ \gamma_0$.
  By a transversal argument, we may assume that $\gamma_0$ has no intersection with the disk $D$ in (\ref{construct}).
  Denote the arc-pushing map along $\gamma_0$ as $g: \Sigma \rightarrow \Sigma$, which is supported away from $D$.
  Since the mapping class $\partial [\gamma]$ is obtained by arc-pushing along $\gamma$, 
  it has representative
  $$ f := (g|_{\Sigma \setminus D} \times {\rm id}_{S^2}) \cup {\rm id}_{D \times S^2} : M \rightarrow M. $$
  The composition map $ {\rm pr} \circ f \circ \S_0 $ is identical to $g$.
  Thus we can pick $f_{\dagger} = g$.
  It deduced that $\S_0^{f} = f \circ \S_0 \circ g^{-1} = \S_0$.
  Therefore, $\Phi_1(\partial [\gamma]) = \Dax(\S_0,\S_0^f) = \Dax(\S_0,\S_0) = 0$, which concludes our proof. 
\end{proof}

By the model of $M$ constructed in (\ref{construct}) and (\ref{model}), 
we can construct an orientation-reversing self-diffeomorphism $\tau$ of $M$, given by
$$ \tau(p,(x,y,z)^T) = (p,(x,y,-z)^T). $$
Note that $\tau$ is an involution of $M$.
Namely, we have $\tau^2 = {\rm id}$.
We need the following lemma to study the action of $\tau$ on $\S_0$.

\begin{lem}\label{taus0}
  The embedding $\tau \circ \S_0$ is isotopic to $\S_{-1}$.
\end{lem}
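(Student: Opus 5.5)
The plan is to show that both $\tau\circ\S_0$ and $\S_{-1}$ are isotopic, through images of smooth sections of the bundle ${\rm pr}: M \to \Sigma$, to one and the same section. Graphs of sections are always embedded, so a homotopy of sections gives an isotopy of embeddings. Throughout we use the model (\ref{construct}), (\ref{model}), with $N=(0,0,1)^T$ and $S=(0,0,-1)^T$; both are fixed by every $\phi(\theta)$. Hence $\tau\circ\S_0$ is the global section $x\mapsto (x,S)$, and $\S_0$ is the global section $x \mapsto (x,N)$.

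\textbf{Step 1: $\S_{-1}$ is a section.} By Definition \ref{defofsd}, $\S_{-1}$ is obtained from $\S_0$ by resolving its intersection with the fiber over $p_1\in H_2$, taken with the opposite orientation. Inside $D\times S^2$, with $D$ a small disk around $p_1$, this surface is $D\times\{N\}\cup\{p_1\}\times \overline{S^2}$. The standard local model of the resolution shows that, after a small isotopy supported near the intersection point, the resolved surface is the graph of a smooth map $u:D\to S^2$ with $u|_{\partial D}\equiv N$ and $\deg u=-1$; the sign is determined by the orientation convention in the definition. So $\S_{-1}$ is isotopic, by an isotopy fixing $H_0\ni b_0$, to a section $s_{-1}$ that equals $N$ over $\Sigma\setminus D$ and equals ${\rm graph}(u)$ over $D$. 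Its homology class is $x-y$, which is consistent with the self-intersection computation $(x-y)^2=-1$.

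\textbf{Step 2: push the section across the poles over $\Sigma\setminus D$.} Choose a path $c_t$ in $S^2$ from $N$ to $S$. Over $(\Sigma\setminus D)\times S^2$ define the section to be constantly $c_t$. On $\partial D$ the gluing via $\phi$ then prescribes the boundary loop $\theta\mapsto\phi(\theta)c_t$ on the $D$-side. Because $\pi_1(S^2)=0$, and more precisely by the homotopy lifting property for the restriction fibration ${\rm Map}(D,S^2)\to{\rm Map}(\partial D,S^2)$, the map $u$ extends to a family $u_t:D\to S^2$ with these boundary values. At $t=1$ the boundary value is constantly $S$. This gives a homotopy of sections $s_t$ from $s_{-1}$ to a section $s_1$ which is $S$ over $\Sigma\setminus D$ and ${\rm graph}(u_1)$ over $D$, where $u_1|_{\partial D}\equiv S$. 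We choose $D$ disjoint from $H_0$, so the basepoint is never moved. Each $s_t$ is an embedding, so this homotopy is an isotopy.

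\textbf{Step 3: compute the degree and finish.} The step I expect to be the main obstacle is showing that $\deg u_1=0$. Computing this directly would require tracking the winding of $\phi$ along the family, so I would avoid that and argue homologically. The section $s_1$ differs from $\tau\circ\S_0$ only over $D$, so $[s_1]=[\tau\circ\S_0]+(\deg u_1)\,y$. Now $[\tau\circ\S_0]=x-y$: indeed $\tau\circ\S_0$ is disjoint from $\S_0$ and meets the fiber once, and the intersection form is $\begin{pmatrix}1&1\\1&0\end{pmatrix}$, so its class is $x+ky$ with $1+k=0$. On the other hand, isotopy preserves homology, so $[s_1]=[\S_{-1}]=x-y$. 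Hence $\deg u_1=0$.

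A degree-zero map $D\to S^2$ that is constant on $\partial D$ is homotopic rel $\partial D$ to the constant map $S$, since $\pi_2(S^2)=\ZZ$ is detected by degree. This gives a final homotopy of sections, hence an isotopy fixing $b$, from $s_1$ to $\tau\circ\S_0$. Concatenating Steps 1–3 yields the pointed isotopy from $\tau\circ\S_0$ to $\S_{-1}$.
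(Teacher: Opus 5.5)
Your proof is correct, but it follows a different line from the paper's.

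\textbf{The paper's route.} The paper never puts $\S_{-1}$ into section form. It writes an explicit isotopy $H$ of $\tau\circ\S_0$ through sections, rotating $(0,0,-1)^T$ to $(0,0,1)^T$ along a meridian under a cutoff $\rho$. This produces an embedding $\S'$ that agrees with $\S_{-1}$ outside a slightly larger disk around $D$. It then regards the two restrictions over that disk as neat disks with common boundary in its preimage $\cong D^2\times S^2$. It invokes Gabai's result \cite[Theorem 10.4]{G20}: such disks are isotopic rel boundary once they are homologous. Homology is checked by a local degree computation in which the twist $\phi$ has to be tracked; both degrees come out $-1$.

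\textbf{Your route.} You instead isotope $\S_{-1}$ itself to a graph using the local model of the resolution. After that, every comparison is a homotopy of sections, hence automatically an isotopy. The only remaining obstruction is an element of $\pi_2(S^2)\cong\ZZ$. You kill it with the global identity $[\tau\circ\S_0]=x-y=[\S_{-1}]$, read off from the intersection form, rather than by following the winding of $\phi$.

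\textbf{Comparison.} Your route is more elementary: the deep 4-dimensional input is replaced by the fact that graphs of sections are embedded. It also makes the sign bookkeeping essentially automatic. The paper's route works with $\S_{-1}$ exactly as constructed, without identifying the resolved surface with a graph. Its key tool applies to arbitrary neat disks in $D^2\times S^2$, not only to graphs. You also tacitly place $p_1$ inside the gluing disk $D$. This is harmless, since moving $p_1$ changes $\S_{-1}$ only by an isotopy.

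\textbf{One correction.} The isotopy you build is not pointed, and no pointed one exists. In Step 2 the section over $b_0\in\Sigma\setminus D$ is $c_t$. So the image of $b_0$ travels from $b=(b_0,(0,0,1)^T)$ to $(b_0,(0,0,-1)^T)$, and choosing $D$ disjoint from $H_0$ does not prevent this. Indeed $\tau\circ\S_0(b_0)=\tau(b)\neq b=\S_{-1}(b_0)$, so no isotopy between these two maps can keep $b_0\mapsto b$. The lemma only asserts a free isotopy, and the paper's $H$ moves $b$ as well. So simply delete the claims that the basepoint is never moved and that the concatenated isotopy is pointed.
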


\begin{proof}
  We may use the model of $M$ and the constructions of $\S_d$ introduced in Subsection \ref{subsec:tstspace}.
  Assume that $\S_0$ and $\S_{-1}$ are identical away from $D \subset \Sigma$.
  Let $N$ be a neighbourhood of $D$ that diffeomorphic to the 2-disk.
  Fix a smooth function $\rho: \Sigma \rightarrow [0,1]$ satisfying $\rho(\Sigma \setminus N) = 1$ and $\rho(D) = 0$.
  Construct an isotopy
  $$ H: \Sigma \times [0,1] \rightarrow M, \quad (p,t) \mapsto (p,(0, \sin\pi\rho(p)t,-\cos\pi\rho(p)t)^T).$$
  The isotopy $H$ is well-defined because $\rho(D) = 0$.
  It holds that $H(p,0) = \tau \circ \S_0(p)$ and $H(q,1) = \S_{-1}(q)$ for all $q \in \Sigma \setminus N$.
  We denote the embedding $H(-,1)$ as $\S'$.
  Restricting the $S^2$-bundle $p:M \rightarrow \Sigma$ on $N$, we obtain the trivial $S^2$-bundle $p^{-1}(N) \cong N \times S^2$.  
  It is sufficient to show that two such neat embeddings $\S'|_{N}, \S_{-1}|_{N}: D^2 \hookrightarrow p^{-1}(N)$ are isotopic relative to $\partial D^2$.
  
  By \cite[Theorem 10.4]{G20}, we only need to show that $\S'|_{N}, \S_{-1}|_{N}$ are homologous in $H_2(p^{-1}(N),\S'(\partial N))$.
  Namely, we need to show that$(\S'|_{N})_*(g) = (\S_{-1}|_{N})_*(g)$, where $g$ is a generator of $H_2(N,\partial N)$.
  By the Mayer-Vietoris sequence, there is a split short exact sequence 
  $$0 \rightarrow H_2(p^{-1}(N)) \xrightarrow{p_*} H_2(p^{-1}(N),\S'(\partial N)) \xrightarrow{\partial_*} H_1(\S'(\partial N)) \rightarrow 0.$$
  Since $\S'|_{\partial N} = \S_{-1}|_{\partial N}$, it holds that $\partial_*(\S'|_{N})_*(g) = \partial_*(\S_{-1}|_{N})_*(g) \in H_1(\S'(\partial N))$.
  Meanwhile, the coordinate with respect to the dircet summand $H_2(p^{-1}(N)) \cong \ZZ$ can be detected through the following homomorphism
  $$ {\rm pr}_* : H_2(p^{-1}(N),\S'(\partial N)) \rightarrow H_2(S^2,{\rm pt}) \cong \ZZ, $$
  where ${\rm pr}:p^{-1}(N) \xrightarrow{\cong} N \times S^2 \rightarrow S^2$ is a projection map.
  We fix an diffeomorphism
  \begin{equation}\label{diffp-1N}
    p^{-1}(N) \rightarrow N \times S^2, \quad 
    \begin{cases}
      (r,\theta,v) \mapsto (r,\theta,v), &(r,\theta) \in D \\
      (r,\theta,v) \mapsto (r,\theta,\phi(\theta)^{-1}v), &(r,\theta) \in N \setminus D.
    \end{cases}
  \end{equation}
  See (\ref{model}) for the expression of $\phi$.
  Then take 
  $$ N \times S^2 \rightarrow S^2, \quad (x,v) \mapsto v.$$
  Let ${\rm pr}:p^{-1}(N) \rightarrow S^2$ be the composition of the two maps above.
  The map ${\rm pr} \circ \S'|_{N} : N \rightarrow S^2$ maps $\partial N$ to $(0,0,1)^T$.
  It induces a map $\eta': S^2 = N/\partial N \rightarrow S^2$.
  Under such isomorphism $H_2(S^2,{\rm pt}) \cong \ZZ$, we have
  $$ {\rm pr}_* \circ (\S'|_{N})_*(g) = \deg (\eta') \in \ZZ.$$
  Up to a homotopy on $N$, the explicit expression of $\eta'$ can be given as
  \begin{equation*}
    \eta: N/\partial N \rightarrow S^2, \quad (r,\theta) \mapsto \phi(\theta)^{-1}(0,\sin \pi r, \cos \pi r)^T, \quad r \in [0,1], \theta \in S^1.
  \end{equation*}
  It shows that $\deg (\eta') = -1$. 
  By the same argument and the construction of $\S_{-1}$, 
  we have ${\rm pr}_* \circ (\S_{-1}|_{N})_*(g) = -1$.
  Therefore, we have $\S'|_{N}, \S_{-1}|_{N}$ are homologous in $H_2(p^{-1}(N),\S'(\partial N))$, which concludes our proof.
\end{proof}

For any $[f] \in \MCG^+(M)$, we define that 
$$ \Phi_2'([f]) = \Phi_2([\tau \circ f \circ \tau]).$$
Then $\Phi_2'$ is also a surjective homomorphism from $\MCG^+(M)$ to $\ZZ[A]$.
The group $\MCG(M)$ can be decomposed into a disjoint union of a subgroup $\MCG^+(M)$ and a coset $[\tau] \cdot \MCG^+(M)$.
By this decomposition, we can finally define the desired homomorphism.

\begin{thm}
  There is a homomorphism given by
  \begin{equation}
    \Phi: \MCG(M) \rightarrow \ZZ[A], \quad
    \begin{cases}
      [f] \mapsto \Phi_2([f]) + \Phi_2'([f]), & [f] \in \MCG^+(M), \\
      [f] \mapsto \Phi_2([\tau \circ f]) + \Phi_2'([\tau \circ f]), &[f] \in [\tau] \cdot \MCG^+(M).
    \end{cases}
  \end{equation}
  The image of $\Phi$ is of countably infinite rank.
\end{thm}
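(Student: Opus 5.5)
We must prove two things about $\Phi$: it is a homomorphism, and its image has countably infinite rank.

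\emph{Homomorphism property.} The key input is the identity
\[
\Phi_2'([f]) = \Phi_2([\tau\circ f\circ\tau]) \quad\text{for } [f]\in\MCG^+(M).
\]
Since $\tau^2={\rm id}$, conjugation by $\tau$ is an involution of $\MCG^+(M)$, and it swaps $\Phi_2$ and $\Phi_2'$. Set
\[
\Psi=\Phi_2+\Phi_2' \quad\text{on } \MCG^+(M).
\]
Then $\Psi([\tau f\tau])=\Psi([f])$, so $\Psi$ is a conjugation-invariant homomorphism.

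On the coset we have $\Phi([f])=\Psi([\tau f])$, and we must check all four cases of products. For $f,g\in\MCG^+(M)$ the equality $\Phi(fg)=\Phi(f)+\Phi(g)$ is immediate. The mixed cases are handled as follows.
\begin{itemize}
\item If $f\in\MCG^+(M)$ and $g=\tau g_0$, then $fg=\tau(\tau f\tau)g_0$. Hence $\Phi(fg)=\Psi(\tau f\tau)+\Psi(g_0)=\Psi(f)+\Phi(g)$.
\item If $f=\tau f_0$ and $g\in\MCG^+(M)$, then $fg=\tau(f_0g)$, which gives additivity directly.
\item If $f=\tau f_0$ and $g=\tau g_0$, then $fg=(\tau f_0\tau)g_0\in\MCG^+(M)$. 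Hence $\Phi(fg)=\Psi(\tau f_0\tau)+\Psi(g_0)=\Psi(f_0)+\Psi(g_0)$.
\end{itemize}
In every case the invariance $\Psi(\tau h\tau)=\Psi(h)$ is used. This is purely formal; I would also note that the definition on the coset does not depend on a choice of representative, because $\tau$ is fixed.

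\emph{Infinite rank.} First, the codomain $\ZZ[A]$ is free abelian on the countable set $A$, which is infinite by \cite[Lemma 5.10]{LWXZ25}. So the rank of the image is at most countable.

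For the lower bound, I would use the self-referential barbell diffeomorphisms $\beta_c$ from the proof of Proposition \ref{phi0}. These are homotopic to the identity, and for a chosen $c\in\mathcal{C}$ they satisfy
\[
\Phi_1(\beta_c)=\mathscr{P}(c)=:a.
\]
The next step is to compute $\Phi_2'(\beta_c)=\Phi_2(\tau\beta_c\tau)$. Now $\tau\beta_c\tau$ is again a barbell diffeomorphism, namely $\beta_{(\tau S_0,\tau m,\tau\hat\gamma)}$, whose data lie in the same region of $M_2$. To evaluate it, I would apply naturality (Lemma \ref{naturality}) to $\tau$:
\[
\Dax(\S_0,(\tau\beta_c\tau)\circ\S_0)=\tau_*\,\Dax(\tau\S_0,\beta_c\tau\S_0),
\]
where $\tau\S_0\simeq\S_{-1}$ by Lemma \ref{taus0}. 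This requires a version of naturality for orientation-reversing $\tau$ and for Dax invariants based at $\S_{-1}$ rather than $\S_0$; the latter is well defined by Theorem \ref{thm1}.

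Since $\S_{-1}$ differs from $\S_0$ only near a point $p_1\in H_2$, and $\beta_c$ is supported away from that fiber, the action of $\beta_c$ on $\S_{-1}$ is again tubing a self-referential disk along $\hat\gamma$. So the same computation as in the proof of Proposition \ref{phi0} gives Dax value $\pm(c+c^{-1})$. The map $\tau$ induces the identity on $\pi$, and the orientation reversal changes only the sign $\epsilon$. Therefore
\[
\Phi(\beta_c)=a\pm a,
\]
and I expect the sign to be $+$, giving $2a$.

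\emph{Main obstacle.} The hard step is fixing this sign, i.e.\ showing that $\Phi_2$ and $\Phi_2'$ do not cancel on $\beta_c$. The orientation reversal of $M$ flips the local sign $\epsilon_j$ of each double point in the Dax count. But the induced orientation on the fiber $S_0$, and hence the direction of the tube, also flips, and the two effects should compensate. The first thing I would try is a local model of a single self-referential disk in $D^4\natural(S^2\times D^2)$, using the explicit description \cite[Theorem 4.9(ii)]{G21} and tracking how $\tau$ acts on the orientations of $S_0$ and $m$.

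If instead cancellation occurs on $\beta_c$, the fallback is to choose $\beta$ supported near $S_0$ and built with respect to $\S_{-1}$, so that $\Phi_2$ and $\Phi_2'$ evaluate on different classes. Once $\Phi(\beta_c)=2a$ holds for each $a\in A$, the image contains $2\ZZ[A]$, which has countably infinite rank. Since $\beta_c$ is homotopic to the identity, this lower bound also holds for the image of $\MCG_0$.
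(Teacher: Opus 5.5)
Your homomorphism argument is correct. It is the paper's argument in cleaner form: the paper checks the mixed cases one at a time using $\Phi_2'=\Phi_2\circ c_\tau$ (where $c_\tau$ is conjugation by $\tau$) and $\tau^2={\rm id}$. That is exactly your observation that $\Psi=\Phi_2+\Phi_2'$ is $c_\tau$-invariant.

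The gap is in the rank lower bound. You end with $\Phi([\beta_c])=a\pm a$ and only \emph{expect} the sign to be $+$, so the proposal does not show that the image contains $2\ZZ[A]$. The paper settles this step in two moves. First, it applies Lemma~\ref{naturality} to $\tau$ with no sign, using only $\tau_*={\rm id}_\pi$; together with Lemma~\ref{taus0} this gives $\Phi_2'([f])=\tfrac12\mathscr{P}\Dax(\S_{-1},\S_{-1}^f)$. Second, $\beta$ is supported away from $D\times S^2$, where $\S_0$ and $\S_{-1}$ differ, so both summands equal $a$.

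You should not expect your proposed local model to produce the $+$ sign, and your suspicion points at a real problem in that step. By transport of structure, the Dax invariant of $(\tau\circ i_1,\tau\circ i_2)$ computed with the orientation $o$ of $M$ equals the Dax invariant of $(i_1,i_2)$ computed with $\tau^*o=-o$. The ambient orientation enters the definition only through the double-point signs $\epsilon_j$, and every one of them flips. The reversed orientations of $S_0$, of $m$ and of the tube, which you hope will compensate, are already part of the transported configuration, so they cannot cancel anything.

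Hence, as far as I can see, for the orientation-reversing $\tau$ with $\tau_*={\rm id}_\pi$ one gets $\Dax(\tau\circ i_1,\tau\circ i_2)=-\Dax(i_1,i_2)$. In other words, Lemma~\ref{naturality} should carry the orientation character of $f$. Running the paper's chain of equalities with this sign gives $\Phi_2'([\beta])=-\tfrac12\mathscr{P}\Dax(\S_{-1},\beta\circ\S_{-1})=-a$. Therefore $\Phi([\beta])=0$ for exactly the barbells you and the paper use.

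Closing the gap therefore needs a genuinely different input. You need mapping classes for which $\mathscr{P}\Dax(\S_0,\beta\circ\S_0)\neq\mathscr{P}\Dax(\S_{-1},\beta\circ\S_{-1})$, for instance ones that interact with $p^{-1}(D)$, as in your fallback. Your proposal does not construct or compute such a family.
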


\begin{proof}
  For any $[f] \in \MCG^+(M), [g] \in [\tau] \cdot \MCG^+(M)$, we have
  \begin{equation*}
    \begin{split}
      \Phi([f \circ g]) &= \Phi_2([\tau \circ f \circ g]) + \Phi_2'([\tau \circ f \circ g]) \\
      &= \Phi_2'([f]) + \Phi_2([\tau \circ g]) + \Phi_2([f \circ g \circ \tau]) \\
      &= \Phi_2'([f]) + \Phi_2([\tau \circ g]) + \Phi_2([f]) + \Phi_2'([\tau \circ g])\\
      &= \Phi([f]) + \Phi([g]),
    \end{split}
  \end{equation*}
  and
  \begin{equation*}
    \begin{split}
      \Phi([g \circ f]) &= \Phi_2([\tau \circ g \circ f]) + \Phi_2'([\tau \circ g \circ f]) \\
      &= \Phi_2([\tau \circ g]) + \Phi_2([f]) + \Phi_2([g \circ f \circ \tau]) \\
      &= \Phi_2([\tau \circ g]) + \Phi_2([f]) + \Phi_2'([\tau \circ g]) + \Phi_2'([f])\\
      &= \Phi([g]) + \Phi([f]),
    \end{split}
  \end{equation*}
  For any $[f], [g] \in [\tau] \cdot \MCG^+(M)$, we have
  \begin{equation*}
    \begin{split}
      \Phi([f \circ g]) &= \Phi_2([f \circ g]) + \Phi_2'([f \circ g]) \\
      &= \Phi_2([f \circ \tau]) + \Phi_2([\tau \circ g]) + \Phi_2([\tau \circ f \circ g \circ \tau]) \\
      &= \Phi_2'([\tau \circ f]) + \Phi_2([\tau \circ g]) + \Phi_2([\tau \circ f]) + \Phi_2'([\tau \circ g])\\
      &= \Phi([f]) + \Phi([g]).
    \end{split}
  \end{equation*}
  Thus, we have shown that $\Phi$ is a homomorphism.

  It remains to study the image of $\Phi$.
  Given $[f] \in \MCG^+(M)$,
  we fix the self-diffeomorphism $f_{\dagger}$ of $\Sigma$.
  Then $\S_0^f = f \circ \S_0 \circ f_{\dagger}^{-1}$.
  Note that the projection ${\rm pr}: M \rightarrow \Sigma$ is invariant under the action of $\tau$, 
  and $({\rm pr})_* : \pi_1(M) \rightarrow \pi_1(\Sigma)$ is an isomorphism.
  Thus, the homomorphism $\tau_*$ on $\pi_1(M)$ induced by $\tau$ is the identity map.
  It deduces that $(\tau \circ f \circ \tau)_{\dagger} = f_\dagger$.
  We have
  \begin{equation*}
    \begin{split}
      \Phi_2'([f]) &= \Dax(\S_0,\S_0^{\tau f \tau}) \\
      &= \Dax(\S_0, \tau \circ f \circ \tau \circ \S_0 \circ f_\dagger^{-1} ) \\
      &= \Dax(\tau \circ \S_0, f \circ \tau \circ \S_0 \circ f_\dagger^{-1} ) \\
      &= \Dax(\S_{-1}, f \circ \S_{-1} \circ f_\dagger^{-1})\\
      &= \Dax(\S_{-1}, \S_{-1}^f).
    \end{split}
  \end{equation*}
  The second equation holds because $(\tau \circ f \circ \tau)_{\dagger} = f_\dagger$.
  The third equation holds because of Lemma \ref{naturality} and the facts that $\tau^2 = {\rm id}$ and $\tau_* = {\rm id}_{\pi}$.
  The fourth equation holds because of Lemma \ref{taus0}.

  For any $a \in A$, 
  there is a self-referential barbell diffeomorphism $\beta$ whose associated loop $c$ satisfies $\mathscr{P}(c) = a$.
  Without loss of generality, 
  we may assume $\S_0$ and $\S_{-1}$ are identical away from the disk $D$ chosen in the model (\ref{construct}). 
  At the same time, we may also assume that $\beta$ is supported away from the restricted trivial bundle $D \times S^2 \subset M$ and the basepoint. 
  Note that $[\beta] \in \ker p_0$. Therefore, it holds that
  \begin{equation*}
    \begin{split}
      \Phi([\beta]) &= \Phi_2([\beta]) + \Phi_2'([\beta]) \\
      &= \frac{1}{2} \left(\mathscr{P} \circ \Dax(\S_0,\beta \circ \S_0) + \mathscr{P} \circ \Dax(\S_{-1},\beta \circ \S_{-1})\right) \\
      &= \mathscr{P}(c+c^{-1}) \\
      &= 2a.
    \end{split}
  \end{equation*}
  It deduces that the image of $\Phi$ contains the subgroup $2\ZZ[A]$.
  Thus, as a free abelian group, the rank of ${\rm Im}(\Phi)$ equals to the cardinality of $A$. 
\end{proof}

By restricting the codomain of $\Phi$ to its image, we obtain a surjective homomorphism
$ \hat{\Phi} : \MCG(M) \rightarrow \ZZ^{\infty}. $
However, it remains such a question.

\begin{Q}
  Is the image of $\Phi$ strictly larger than $2\ZZ[A]$ ?
\end{Q}

\bibliographystyle{plain}
\bibliography{daxinv}

\end{document}